\newtheorem{theorem}{Theorem}[section]
\newtheorem{corollary}{Corollary}
\newtheorem{lemma}[theorem]{Lemma}
\newtheorem{proposition}{Proposition}
 \numberwithin{equation}{section}
\newtheorem{remark}{Remark}
\newcommand{\keywords}
\def\bc{\begin{center}}       \def\ec{\end{center}}
\def\ba{\begin{array}}        \def\ea{\end{array}}
\def\be{\begin{equation}}     \def\ee{\end{equation}}
\def\bea{\begin{eqnarray}}    \def\eea{\end{eqnarray}}
\def\beaa{\begin{eqnarray*}}  \def\eeaa{\end{eqnarray*}}
\def\mathbb{\Bbb}
\begin{document}

\title{\bf Global existence and steady states of a two competing species Keller--Segel chemotaxis model}
\author{Chunyi Gai \thanks{ {\tt chunyi.gai@dal.ca}, Department of Mathematics and Statistics, Dalhousie University,6316 Coburg Road, Halifax, Nova Scotia, Canada B3H 4R2.}, Qi Wang \thanks{ {\tt qwang@swufe.edu.cn}, Department of Mathematics, Southwestern University of Finance and Economics, 555 Liutai Ave, Wenjiang, Chengdu, Sichuan 611130, China.}, Jingda Yan \thanks{ {\tt lz@2011.swufe.edu.cn}, Hanqing Advanced Institute of Economics and Finance, Renmin University of China, No. 59 Zhongguancun Street, Haidian District, Beijing 100872, China.}\\
}

\date{}
\maketitle

\abstract
We study a diffusive Lotka-Volterra competition system with advection under Neumann boundary conditions.  Our system models a competition relationship that one species escape from the region of high population density of their competitors in order to avoid competition.  We establish the global existence of bounded classical solutions to the system over one-dimensional finite domains.  For multi-dimensional domains, globally bounded classical solutions are obtained for a parabolic-elliptic system under proper assumptions on the system parameters.  These global existence results make it possible to study bounded steady states in order to model species segregation phenomenon.  We then investigate the one-dimensional stationary problem.  Through bifurcation theory, we obtain the existence of nonconstant positive steady states, which are small perturbations from the positive equilibrium; we also rigourously study the stability of these bifurcating solutions when diffusion coefficients of the escaper and its competitor are large and small respectively.  In the limit of large advection rate, we show that the reaction-advection-diffusion system converges to a shadow system involving the competitor population density and an unknown positive constant.  Existence and stability of positive nonconstant solutions to the shadow system have also been obtained through bifurcation theories.  Finally, we construct infinitely many single interior transition layers to the shadow system when crowding rate of the escapers and diffusion rate of their interspecific competitors are sufficiently small.  The transition-layer solutions can be used to model the interspecific segregation phenomenon.

\textbf{Keywords: Global existence, Lotka-Volterra competition system, steady state, bifurcation, stability, transition layer.}

\section{Introduction}
Mathematical analysis of reaction-diffusion systems has become more and more important in understanding the ecological behaviors of interacting species.  To study the population dynamics of two competing species over a homogeneous environment, we consider the following coupled reaction-advection-diffusion system
\begin{equation}\label{16}
\left\{
\begin{array}{ll}
u_t=\nabla \cdot(D_1 \nabla u+\chi u \phi(v)\nabla v)+(a_1-b_1u-c_1v)u,&x \in \Omega,~t>0,     \\
\tau v_t= D_2 \Delta v+(a_2-b_2u-c_2v)v,&x \in \Omega,~t>0,\\
\frac{\partial u}{\partial \textbf{n}}=\frac{\partial v}{\partial \textbf{n}}=0,& x \in \partial \Omega,~t>0,\\
u(x,0)=u_0(x) \geq 0,~v(x,0)=v_0(x) \geq 0,& x\in \Omega,
\end{array}
\right.
\end{equation}
where $\Omega$ is a bounded domain in $\mathbb{R}^N$, $N\geq1$;  $(u,v)=(u(x,t),v(x,t))$ are the population densities of two competing species at space-time $(x, t)$; $D_1$ and $D_2$ are positive constants that measure the tendency of random walks of the two species respectively;  $\chi>0$ and $\tau\geq0$ are constants, and $\phi(v)$ is a smooth function that reflects the variation of the advection flux with respect to the population density $v$;  the constants $a_i,b_i,c_i$, $i=1,2$ are nonnegative and ecologically, $a_1$ and $a_2$ reflect the intrinsic growth rates of the species, $b_1$ and $c_2$ measure the levels of intraspecific crowding, while $b_2$ and $c_1$ interpret the intensities of interspecific competition; moreover, $\textbf{n}$ is the unit outer normal to the boundary $\partial \Omega$ and the non-flux boundary condition means that the domain is an enclosed habitat.  We assume that the initial data $u_0$ and $v_0$ are not identically zero.

In this paper, we are concerned with the mathematical modeling of interspecific segregation by (\ref{16}).  For this purpose, we show that (\ref{16}) allows global existence of bounded classical solutions-see Theorem \ref{thm25} and Theorem \ref{thm27}.  We also study the existence and stability of nontrivial steady states to (\ref{16})-see Theorem \ref{thm31} and Theorem \ref{thm32}.  Moreover, we show that these nontrivial steady states can be approximated by a shadow system which has the transition layer structures that can be used to model the segregation phenomenon in interspecific competition-see Theorem \ref{thm42} and Theorem \ref{thm54}.

In the absence of advection term, i.e, for $\chi=0$, (\ref{16}) becomes
\begin{equation}\label{11}
\left\{
\begin{array}{ll}
u_t=D_1 \Delta u+ (a_1-b_1u-c_1v)u,&x \in \Omega,~t>0,     \\
v_t=D_2 \Delta v+(a_2-b_2u-c_2v)v,&x \in \Omega,~t>0,\\
\frac{\partial u}{\partial \textbf{n}}=\frac{\partial v}{\partial \textbf{n}}=0,& x \in \partial \Omega,~t>0,\\
u(x,0)=u_0(x) \geq 0,~v(x,0)=v_0(x) \geq 0,& x\in \Omega.
\end{array}
\right.
\end{equation}
It is easy to see that (\ref{11}) has four equilibria $(0,0),(\frac{a_1}{b_1},0),(0,\frac{a_2}{c_2})$ and $(\bar{u},\bar{v})$,
where
\[(\bar{u},\bar{v})=\left(\frac{a_1c_2-a_2c_1}{b_1c_2-b_2c_1},\frac{a_2b_1-a_1b_2}{b_1c_2-b_2c_1}\right);\]
moreover, $(\bar{u},\bar{v})$ is positive if and only if
\begin{equation}\label{12}
\frac{c_1}{c_2}<\frac{a_1}{a_2}<\frac{b_1}{b_2},\text{~or~}\frac{b_1}{b_2}<\frac{a_1}{a_2}<\frac{c_1}{c_2},
\end{equation}
which are referred as the weak and strong competition respectively.  Ecologically, the positive solution interprets coexistence of the competing species.

It is well known that the dynamics of (\ref{11}) is dominated by that of the ODEs if one of the diffusion rates is large.  Moreover, the dynamics of (\ref{11}) is rather simple and has been completely understood in the weak competition case $\frac{c_1}{c_2}<\frac{a_1}{a_2}<\frac{b_1}{b_2}$.  We summarize these results as follows.
\begin{theorem}[\cite{CHS,DR,LN}]\label{thm11}
Suppose that (\ref{12}) holds.  There exists a positive constant $D_0=D_0(a_i,b_i,c_i)$, $i=1,2$, such that (\ref{11}) has no nonconstant positive steady state if $\max \{D_1, D_2\}>D_0$; moreover, if $\frac{c_1}{c_2}<\frac{a_1}{a_2}<\frac{b_1}{b_2}$, the positive steady state $(\bar{u},\bar{v})$ of (\ref{11}) is asymptotically stable in the sense that, for any positive solution $(u(x,t),v(x,t))$ of (\ref{11}), $\lim_{t \rightarrow +\infty}\left(u(x,t),v(x,t)\right) \rightarrow (\bar{u},\bar{v})$, regardless of the initial data and the size of diffusion rates $D_1,D_2$.
\end{theorem}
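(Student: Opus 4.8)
\quad Since Theorem~\ref{thm11} collects classical facts, the plan is only to sketch the strategy, following \cite{CHS,DR,LN}. First I would record the a priori information used throughout. By standard parabolic theory (\ref{11}) has a unique global classical solution; since $u_0,v_0\ge0$ and $\not\equiv0$, the strong maximum principle gives $u,v>0$ for $t>0$; and comparing with the scalar logistic equations obtained by discarding the non-positive competition terms yields $\limsup_{t\to\infty}\max_{\overline{\Omega}}u\le a_1/b_1$ and $\limsup_{t\to\infty}\max_{\overline{\Omega}}v\le a_2/c_2$. In particular every positive steady state obeys $0<u\le a_1/b_1$, $0<v\le a_2/c_2$, and by elliptic $L^p$--Schauder estimates is bounded in $C^2(\overline{\Omega})$ by a constant depending only on $\Omega$ and the $a_i,b_i,c_i$, not on $D_1$ or $D_2$.

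\emph{Non-existence of nonconstant positive steady states for large diffusion.} Let $(u,v)$ solve the stationary problem associated with (\ref{11}) and set $\langle w\rangle=\frac1{|\Omega|}\int_\Omega w$. Testing the $u$- and $v$-equations with $u-\langle u\rangle$ and $v-\langle v\rangle$ respectively and integrating by parts gives
\[ D_1\int_\Omega|\nabla u|^2\,dx+D_2\int_\Omega|\nabla v|^2\,dx=\int_\Omega g_1(u,v)(u-\langle u\rangle)\,dx+\int_\Omega g_2(u,v)(v-\langle v\rangle)\,dx, \]
where $g_1(u,v)=(a_1-b_1u-c_1v)u$ and $g_2(u,v)=(a_2-b_2u-c_2v)v$. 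Since $u-\langle u\rangle$ and $v-\langle v\rangle$ have zero mean, one may subtract the constants $g_i(\langle u\rangle,\langle v\rangle)$ and then use the Lipschitz bound $|g_i(u,v)-g_i(\langle u\rangle,\langle v\rangle)|\le L\big(|u-\langle u\rangle|+|v-\langle v\rangle|\big)$, with $L$ controlled by the $L^\infty$ bounds above, together with the Poincar\'e--Wirtinger inequality $\|w-\langle w\rangle\|_{L^2}^2\le C_\Omega\|\nabla w\|_{L^2}^2$; this forces $\nabla u\equiv\nabla v\equiv0$ once $\min\{D_1,D_2\}$ exceeds some $D_0=D_0(a_i,b_i,c_i,\Omega)$. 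The sharper statement, with $\max$ in place of $\min$, follows by refining the estimate: the identity above gives $\|\nabla u\|_{L^2}^2\le C/D_1$, so for $D_1$ large $u$ is $H^1$-close to a constant $\xi\in[0,a_1/b_1]$ and the $v$-equation reduces in the limit to the scalar Neumann logistic problem $D_2\Delta v+(a_2-b_2\xi-c_2v)v=0$, whose only nonnegative solutions are the constants; a local-uniqueness argument near those constants then excludes nonconstant positive steady states for all large $D_1$, and the case of large $D_2$ is symmetric.

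\emph{Global stability in weak competition.} Assume $c_1/c_2<a_1/a_2<b_1/b_2$, so that $(\bar u,\bar v)$ is positive and $b_1c_2>b_2c_1$. Following the classical Lotka--Volterra construction, I would use the functional
\[ E(t)=\int_\Omega\Big[b_2\big(u-\bar u-\bar u\ln\tfrac{u}{\bar u}\big)+c_1\big(v-\bar v-\bar v\ln\tfrac{v}{\bar v}\big)\Big]\,dx\;\ge\;0, \]
which is well defined for $t>0$ by positivity and vanishes only at $(u,v)\equiv(\bar u,\bar v)$. Differentiating along (\ref{11}), using $a_1=b_1\bar u+c_1\bar v$ and $a_2=b_2\bar u+c_2\bar v$ and integrating the diffusion terms by parts (the Neumann data kill the boundary terms), one obtains
\[ \frac{dE}{dt}=-b_2D_1\bar u\int_\Omega\frac{|\nabla u|^2}{u^2}\,dx-c_1D_2\bar v\int_\Omega\frac{|\nabla v|^2}{v^2}\,dx-\int_\Omega Q\big(u-\bar u,\,v-\bar v\big)\,dx, \]
where $Q(X,Y)=b_1b_2X^2+2b_2c_1XY+c_1c_2Y^2$ is a positive-definite quadratic form precisely because $(2b_2c_1)^2<4b_1b_2c_1c_2$, i.e. $b_2c_1<b_1c_2$ --- exactly the weak-competition hypothesis. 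Hence $E$ is a strict Lyapunov functional; since the orbit $\{(u(\cdot,t),v(\cdot,t)):t\ge1\}$ is precompact in $C(\overline{\Omega})^2$ by parabolic smoothing and the uniform bounds above, LaSalle's invariance principle yields $(u(\cdot,t),v(\cdot,t))\to(\bar u,\bar v)$ uniformly, for arbitrary initial data and arbitrary $D_1,D_2>0$.

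\emph{Main obstacle.} I expect two points to require real care. The first is producing time-uniform lower bounds $\liminf_{t\to\infty}\min_{\overline{\Omega}}u>0$ and $\liminf_{t\to\infty}\min_{\overline{\Omega}}v>0$, needed so that $E$ and the computation of $dE/dt$ remain valid on the $\omega$-limit set and that this set lies in the open positive cone; these are obtained from the upper bounds together with a Harnack-type/comparison argument. The second is the upgrade from $\min\{D_1,D_2\}$ to $\max\{D_1,D_2\}$ in the non-existence part, which is invisible to the plain energy estimate and requires the limiting scalar-logistic analysis sketched above. Note, finally, that the choice of weights $(b_2,c_1)$ in $E$ is the crux of the stability argument --- it is exactly what makes $Q$ definite --- and that the construction genuinely fails outside the weak-competition case $c_1/c_2<a_1/a_2<b_1/b_2$.
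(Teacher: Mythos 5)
The paper gives no proof of Theorem~\ref{thm11}: it is quoted verbatim from \cite{CHS,DR,LN} as background, so there is no in-paper argument to compare against. Your sketch is a faithful reconstruction of the standard proofs in those references --- the energy identity plus Poincar\'e--Wirtinger for non-existence under large $\min\{D_1,D_2\}$, the Lou--Ni limiting/scalar-logistic refinement to upgrade to $\max\{D_1,D_2\}$, and the weighted Lyapunov functional with weights $(b_2,c_1)$ whose quadratic form $Q$ is definite exactly when $b_1c_2>b_2c_1$ --- and the two points you flag (uniform positive lower bounds so that $E$ and $dE/dt$ make sense on the $\omega$-limit set, and the $\min$-to-$\max$ upgrade) are precisely where the real work lies. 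The only cosmetic discrepancy is that your $D_0$ necessarily depends on $\Omega$ as well as on $a_i,b_i,c_i$, which the theorem's statement suppresses.
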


However, the strong competition case $\frac{b_1}{b_2}<\frac{a_1}{a_2}<\frac{c_1}{c_2}$ is much more complicated and various interesting phenomena may occur.  Kishimoto and Weinberger showed in \cite{KiW} that, for $\Omega$ being any convex domain in $\mathbb{R}^N$, $N\geq1$, system (\ref{11}) has no nonconstant stable steady states.   On the other hand, Matano and Mimura \cite{MM} established the existence of a stable nonconstant positive steady state when $\Omega$ has a dumbbell shape with a narrow bar.  Similar results on positive nonconstant solutions can be found in \cite{KY,MEF} and the references therein.  We refer the reader to \cite{LN} for more detailed discussions on (\ref{11}) and \cite{E,Ni} for similar works over a spatially heterogeneous habitat.

From the viewpoint of mathematical modeling, it is not entirely reasonable to assume that individuals move around randomly, even in a homogeneous environment.  It ignores the population pressures from intraspecific crowding and interspecific competition.  Moreover, according to Theorem \ref{thm11}, diffusions alone are insufficient to induce nontrivial patterns of (\ref{11}) in almost all cases (at least when $\Omega$ is convex), therefore it can not be used to model the inhomogeneous population distributions of interacting species.  To study the directed movements of individuals due to mutual interactions, Shigesada, Kawasaki and Teramoto \cite{SKT} proposed the following system in 1979 to model the segregation phenomenon of two competing species
\begin{equation}\label{13}
\left\{
\begin{array}{ll}
u_t=\Delta[(D_1+\rho_{11}u+\rho_{12}v)u]+(a_1-b_1u-c_1v)u, &x \in \Omega,~t>0,     \\
v_t=\Delta[(D_2+\rho_{21}u+\rho_{22}v)v]+(a_2-b_2u-c_2v)v,& x \in \Omega,~t>0, \\
\frac{\partial u}{\partial \textbf{n}}=\frac{\partial v}{\partial \textbf{n}}=0,& x \in \partial \Omega,~t>0,\\
u(x,0)=u_0(x) \geq 0,~ v(x,0)=v_0(x) \geq 0,& x\in \Omega,
\end{array}
\right.
\end{equation}
where $\rho_{i,j}$, $i,j=1,2$, are nonnegative constants and the rest notations are the same as in (\ref{11}).  $\rho_{11},\rho_{22}$ are referred as the \textit{self-diffusions} and $\rho_{12},\rho_{21}$ the \textit{cross-diffusions}.  Ecologically, $\rho_{11},\rho_{22}$ represent the diffusion pressures due to the presence of conspecifics and $\rho_{12},\rho_{21}$ measure the diffusion pressures from the interspecific competitors.

Considerable work has been done on the qualitative analysis of the steady states regarding (\ref{13}).  First of all, in the absence of cross-diffusions, one can easily show that system (\ref{13}) does not exhibit diffusion-induced or self-diffusion-induced instability in the framework of Turing's analysis.  To study the effect of cross-diffusion on the pattern formations of (\ref{13}), Mimura and Kawasaki \cite{MK} made the first attempt in this direction, assuming $\rho_{11}=\rho_{21}=\rho_{22}=0$, in the weak competition case for $\Omega=(0,L)$.  They applied Lyapunov-Schmidt method to obtain positive nontrivial solutions of (\ref{13}) which are small perturbations from $(\bar{u},\bar{v})$.  Similar results were obtained by Matano and Mimura in \cite{MM} for the same system with $D_1=D_2$ through local bifurcation analysis.  For $\rho_{11}=\rho_{21}=\rho_{22}=0$, Mimura, et al applied singular perturbation and bifurcation methods in \cite{M1,MNTT} to show that (\ref{13}) over one-dimensional finite admits solutions with interior transition layer if $\rho_{12}$ is large and $d_1$ is sufficiently small in both weak and strong competition cases.

Great progress was made by Lou and Ni in \cite{LN,LN2} on the qualitative analysis of steady states of (\ref{13}) over multi-dimensional domains.  Their results can be briefly summarized as follows.  Both diffusion and self-diffusion have smoothing effect on the stationary problem of (\ref{13}) and only one of them is required large to suppress the formation of nonconstant steady states when the corresponding cross-diffusion cooperates.  However, cross-diffusion tends to support nonconstant steady states.  Moreover, they established the existence and limiting profiles of the nonconstant steady states as $\rho_{12}/D_1$ or $\rho_{21}/D_2$ approaches to infinity.  They also studied their shadow systems that have boundary spikes.  We refer the reader to \cite{KW,LNY,Ni,NWX,WX}, etc. for results and recent developments on these problems.

We consider in this paper an alternative reaction-diffusion system with advection terms in the following form

\begin{equation}\label{14}
\left\{
\begin{array}{ll}
u_t=\nabla \cdot(D_1 \nabla u+\chi \Phi_1(u,v)\nabla v)+f(u,v),&x \in \Omega,~t>0,     \\
v_t=\nabla \cdot(D_2 \nabla v+\chi \Phi_2(u,v)\nabla u)+g(u,v),&x \in \Omega,~t>0,\\
u(x,0)=u_0(x) \geq 0,~v(x,0)=v_0(x) \geq 0,& x\in \Omega,
\end{array}
\right.
\end{equation}
where $\chi$ is a constant that measures the strength of interference from interspecific competitors; $\Phi_1$ and $\Phi_2$, the so-called sensitivity potentials, are smooth functions that reflect the variations of directed dispersal magnitude with respect to the inter- and intra-species population densities.  If $\chi>0$, this system models a competition relationship that both species escape from the region of high concentration of their interspecific competitors, and if $\chi<0$, both species invade the habitat of their competitors.

To justify our choices on the constant $\chi$ and the sensitivity functions, we derive model (\ref{14}) from a macroscopic approach based on the conservation of species populations.  For species $u$, we have a transport equation in the following form
\begin{equation}\label{15}
u_t=-\nabla \cdot \textbf{J}+f,
\end{equation}
where $\textbf{J}$ is the total population flux and $f$ is the birth-death rate of the species.  We assume that the total flux is a superposition of the diffusion flux $\textbf{J}_{\text{diffusion}}$ from random walks and the competition flux $\textbf{J}_{\text{competition}}$ due to the interspecific population pressure.  The
diffusion flux is modelled by Fick's law $\textbf{J}_{\text{diffusion}}=-D_2 \nabla u$; on the other hand, if $u$ escapes the habitat of $v$, we assume that the competition flux of $u$ is in the negative direction of $\nabla v$, in which the population density of the species $v$ increases most rapidly;  moreover, we assume that the intensity of the competition flux depends on the population densities of both species, therefore it takes the form $\textbf{J}_{\text{competition}}=-\chi \Phi_1(u,v)\nabla v$, for some $\chi>0$;  similarly we have that $\chi<0$ if $u$ invades the habitat of $v$.  Hence the $u$-equation in (\ref{14}) follows from (\ref{15}) and we can also derive the $v$-equation of (\ref{14}) by the same reasonings.  For the sake of mathematical simplicity and to elucidate the effect of advection on the existence of positive solutions of (\ref{14}) and its stationary system, we choose $\Phi_1(u,v)=u\phi(v)$ and $\Phi_2(u,v) \equiv 0$ and take the classical Lotka-Volterra dynamics, then (\ref{14}) is reduced into (\ref{16}) and it models an escaping-competition relationship as we have discussed above.

To compare (\ref{14}) with the SKT model (\ref{13}), we rewrite the second system as
{\small\begin{equation*}
\left\{
\begin{array}{ll}
u_t=\nabla \cdot [(D_1+2\rho_{11}u+\rho_{12}v) \nabla u+\rho_{12}u\nabla v]+(a_1-b_1u-c_1v)u, &x \in \Omega,~t>0,     \\
v_t=\nabla \cdot[(D_2+\rho_{21}u+2\rho_{22}v)\nabla v+\rho_{21}v \nabla u]+(a_2-b_2u-c_2v)v,& x \in \Omega,~t>0, \\
\frac{\partial u}{\partial \textbf{n}}=\frac{\partial v}{\partial \textbf{n}}=0,& x \in \partial \Omega,~t>0,\\
u(x,0)=u_0(x) \geq 0,~ v(x,0)=v_0(x) \geq 0,& x\in \Omega.
\end{array}
\right.
\end{equation*}}It is easy to see that both diffusion rates in (\ref{13}) increase with respect to the population densities of both species.  Moreover, by our derivation of (\ref{14}), the advection terms interpret a dispersal strategy of avoiding population pressures from inter-species.  Moreover, both diffusion and advection in (\ref{13}) help to avoid intraspecific crowding and interspecific competition.  However, one needs to be cautious in applying (\ref{13}) to model an invading-competing relationship between $u$ and $v$, which can be simulated by taking $\chi<0$ in (\ref{14}).  For example, if $\rho_{ij}<0$, $i\neq j$, the diffusion rates in (\ref{13}) become negative for $u$ and $v$ being large and this makes (\ref{13}) an unrealistic choice for the modeling of species competition.  Therefore, some modifications of (\ref{13}) are required for the sake of mathematical analysis.

We also want to point out that the global existence and blow-up of solutions to the quasilinear models (\ref{13}) and (\ref{14}) are also important and challenging problems.  Considerable amount of works have been done on the global existence of (\ref{13})-see \cite{CLY,LNW} and the surveys in \cite{Ni} for relatively recent works in this direction.  It is commented by Cosner in the survey paper \cite{C} (Sec. 3.4) that, $u$ \emph{advects down} $\nabla v$ and vice-versa in (\ref{13}) and the negative feedbacks between $u$ and $v$ make finite time blow-ups of (\ref{13}) less likely.  However, it is unclear that if they are sufficient to suppress the blow-ups in (\ref{14}) if $\chi>0$.  Moreover, if $\chi<0$, the advection in (\ref{14}) is of Keller-Segel type and the positive stimulations between $u$ and $v$ make the blow-ups likely to occur during finite or infinite time period.  It is also worth pointing out that systems in the form of (\ref{14}) can be used as predator-prey models-see \cite{C} and the references therein for more discussions.

One of the most interesting phenomena in species competition is the segregation of inter-species, that is, $u$ and $v$ dominate two separated patches over $\Omega$.  Time-dependent systems may describe the species segregation phenomenon in terms of blow-up solutions, i.e, the $L_\infty$ norm of the solutions approaches to infinity over finite or infinite time period.  Then the segregation can be simulated by a $\delta$-function or a linear combination of $\delta$-functions that measure the species population densities.  Such attempts have been made on Keller-Segel chemotaxis systems that model the directed cellular movements along the gradient of certain chemicals.  See \cite{HP,HV,Nan} for works in this direction.  Though such blow-up solutions are evidently connected to the species segregation phenomenon, a $\delta-$funtion is not an optimal choice for this purpose since it challenges the rationality that population density can not be infinity in one way and it also brings difficulties to numerical simulations in another way.  An alternative approach is to show that the time-dependent system has global-in-time solutions which converges to bounded steady states with boundary spikes, transition layer, etc.  This approach has also been adopted for the chemotaxis models.  See \cite{HP,WX0} and the references therein for recent results and developments in this direction.

The remaining parts of this paper are organized as follows.  In Section \ref{section2}, we obtain the local existence of classical smooth solutions following the theories of Amann \cite{A,A1} for general quasilinear parabolic systems.  Then we proceed to study the $L^\infty$-bounds of the local solutions to establish the global existence.  The boundedness of $v$ is an immediate result of the Maximum Principles.  We employ semigroup arguments and the Moser-Alikakos iteration technique on appropriate $L^p$-estimate to establish the $L^\infty$-bound of $u$.  Global existence of (\ref{16}) with $\tau \geq0$ is completely proved for $\Omega=(0,L)$.  We also investigate the global existence for a parabolic-elliptic system (\ref{16}) with $\tau=0$ over a bounded multi-dimensional domain $\Omega \subset\mathbb{R}^N$, $N\geq 2$-see Theorem \ref{thm25} and Theorem \ref{thm27} for the one-dimensional and multi-dimensional global existence respectively.

In Section \ref{section3}, we study one-dimensional stationary solutions of (\ref{16}).  By using the classical Crandall-Rabinowitz bifurcation theories \cite{CR,CR2}, we obtain the existence and stability of nontrivial positive steady states of (\ref{16}) in Theorem \ref{thm31} and Theorem \ref{thm32}.  Section \ref{section4} is devoted to study the effect of large advection rate $\chi$.  We show that the stationary system of (\ref{16}) converges to a shadow system as $\chi \rightarrow \infty$ with $\chi/D_1 \rightarrow r\in(0,\infty)$, i.e, $\chi$ and $D_1$ being comparably large.  The shadow system consists of a single PDE of population density $v$ with an unknown positive constant $\lambda$ and an integral constraint-see (\ref{44}).  We then apply the Crandall-Rabinowitz bifurcation theories to investigate the existence and stability of nonconstant positive steady state to the shadow system.

In Section \ref{section5}, we carry out the analysis on the asymptotic behaviors of $v$ in the shadow system as its diffusion rate $D_2=\epsilon$ shrinks to zero.  It is shown that the shadow system admits solutions with an interior transition layer for $D_2=\epsilon>0$ and $b_1$ being sufficiently small-see Theorem \ref{thm54}.  The transition-layer solution is an approximation of a step function and it can be a reasonable modeling of the species segregation.  Finally, we include discussions and propose some interesting problems in Section \ref{section6}.

We remind our reader that we are interested in positive solutions to all the systems and we assume condition (\ref{12}) throughout this paper.  In the sequal, $C$ denotes a positive constant that may vary from line to line unless otherwise stated.

\section{Global-in-time solutions}\label{section2}
In this section, we study the global existence and boundedness of positive classical solutions $(u,v)$ to system (\ref{16}) over a bounded domain $\Omega\subset\mathbb{R}^N$, $N\geq1$.  We shall apply the well-known results of Amann \cite{A,A1} to obtain the local existence and then establish $L^\infty$-bounds of $u$ and $v$ for the global existence.  First of all, we convert the $u$ and $v$ equations of (\ref{16}) into the following integral forms
\begin{equation}\label{21}
\begin{split}
u(\cdot,t)=&\left. e^{D_1 (\Delta-1)t}u_0-\int_0^t \nabla \cdot e^{-D_1A(t-s)}\big( \chi u(\cdot,s) \phi(v(\cdot,s))  \nabla v(\cdot,s) \big) ds\right.\\
&\left.+\int_0^t  e^{-D_1A(t-s)} \big(D_1u(\cdot,s)+f(u(\cdot,s),v(\cdot,s)) \big) ds, \right. \\
v(\cdot,t)=&\left.e^{D_2 (\Delta-1)t}v_0+\int_0^t e^{D_2 (\Delta-1)t} \big(D_2v(\cdot,s)+g(u(\cdot,s),v(\cdot,s)) \big)ds, \right.
\end{split}
\end{equation}
where $f(u,v)=(a_1-b_1u-c_1v)u$ and $g(u,v)=(a_2-b_2u-c_2v)v$.

\subsection{Preliminaries and local solutions}

We present the local existence and uniqueness of the solution of (\ref{16}) as well as some preliminary results in this part.  Our first result goes as follows.
\begin{theorem}\label{thm21}
Let $\Omega\subset\mathbb{R}^N$, $N\geq1$, be a bounded domain with boundary $\partial \Omega \in C^3$.  Suppose that $\tau>0$ and $\phi\in C^5(\mathbb{R};\mathbb{R})$.

(i)  For any initial data $(u_0,v_0)\in C^0(\Omega)\times W^{1,p}(\Omega)$, $p>N$, system (\ref{16}) has a unique solution $(u(x,t),v(x,t))$ defined on $\bar \Omega\times [0,T_{\max})$ with $0<T_{\max} \leq \infty$ such that $(u(\cdot,t),v(\cdot,t)) \in C^0(\bar \Omega,[0,T_{\max}))\times C^0(\bar \Omega,[0,T_{\max}))$ and $(u,v)\in C^{2,1}(\bar \Omega,[0,T_{\max}))\times C^{2,1}(\bar \Omega,[0,T_{\max}))$.

(ii)  If $\sup_{s\in(0,t)}\Vert (u,v)(\cdot,s) \Vert_{L^\infty}$ is bounded for $t\in(0,T_{\max})$, then $T_{\max}=\infty$, i.e, $(u,v)$ is a global solution to (\ref{16}).  Furthermore, $(u,v)$ is a classical solution such that, $(u,v) \in C^\alpha((0,\infty),C^{2(1-\beta)}(\bar \Omega)\times C^{2(1-\beta)}(\bar \Omega))$ for any $0\leq \alpha \leq \beta \leq 1$.

(iii)  Suppose that $u_0>0$ and $\frac{a_2}{c_2}\geq v_0\geq 0$ on $\bar \Omega$.  Then $u>0$ and $\frac{a_2}{c_2}>v>0$ on $\bar \Omega \times (0,T_{\max})$.
\end{theorem}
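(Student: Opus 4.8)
\textbf{Proof proposal for Theorem \ref{thm21}.}

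The plan is to derive each of the three parts from the quasilinear parabolic theory of Amann together with elementary maximum-principle arguments. For part (i), I would rewrite (\ref{16}) with $\tau>0$ as an abstract quasilinear parabolic system $w_t = \nabla\cdot(\mathcal{A}(w)\nabla w) + \mathcal{F}(w)$ for $w=(u,v)$, where the diffusion matrix is
\[
\mathcal{A}(u,v)=\begin{pmatrix} D_1 & \chi u\phi(v)\\ 0 & D_2/\tau\end{pmatrix}.
\]
This matrix is upper triangular with positive diagonal entries, hence its spectrum is $\{D_1, D_2/\tau\}\subset(0,\infty)$, so the system is normally elliptic in the sense of Amann; the Neumann boundary operator satisfies the required complementing condition. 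The smoothness hypothesis $\phi\in C^5$ guarantees that $\mathcal{A}$ and $\mathcal{F}$ are smooth enough (in fact of class $C^4$ or better as maps on the relevant function spaces) for Amann's local existence-uniqueness theorems in \cite{A,A1} to apply with initial data in $C^0(\bar\Omega)\times W^{1,p}(\Omega)$, $p>N$, which embeds into the required interpolation space. This yields the maximal solution on $[0,T_{\max})$ with the stated continuity and $C^{2,1}$ regularity in the interior of the time interval, via parabolic bootstrapping once the solution lies in a Hölder class for positive times.

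For part (ii), the key input is again Amann's criterion: the maximal solution can fail to exist globally only if it leaves every compact subset of the state space, which — given the a priori $L^\infty$-control assumed — forces control in the stronger norms needed to continue. Concretely, if $\sup_{s\in(0,t)}\|(u,v)(\cdot,s)\|_{L^\infty}$ stays bounded on $(0,T_{\max})$, then the coefficients $\chi u\phi(v)$ and the reaction terms $f,g$ remain bounded, so parabolic $L^p$ and Schauder estimates upgrade the bound to a uniform $W^{1,p}$, hence $C^{2(1-\beta)}$ bound; this contradicts the blow-up alternative unless $T_{\max}=\infty$. The Hölder-in-time, Hölder-in-space regularity $(u,v)\in C^\alpha((0,\infty),C^{2(1-\beta)}(\bar\Omega)^2)$ for $0\le\alpha\le\beta\le 1$ then follows from the smoothing property of the analytic semigroups generated by $D_1(\Delta-1)$ and $D_2(\Delta-1)$ applied to the integral representation (\ref{21}), together with the boundedness of the nonlinear terms.

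For part (iii), positivity and the upper bound on $v$ come from maximum principles. The $v$-equation $\tau v_t = D_2\Delta v + (a_2 - b_2 u - c_2 v)v$ is a scalar linear parabolic equation in $v$ (viewing $u$ as a given coefficient) with zero Neumann data; since $v_0\ge 0$, the strong maximum principle gives $v>0$ on $\bar\Omega\times(0,T_{\max})$, and since $v_0\le a_2/c_2$ and $u\ge 0$, the constant $a_2/c_2$ is a supersolution (note $(a_2 - b_2 u - c_2\cdot a_2/c_2)(a_2/c_2) = -b_2 u\, a_2/c_2 \le 0$), so comparison yields $v<a_2/c_2$. For $u$, I would write the $u$-equation in non-divergence form,
\[
u_t = D_1\Delta u + \chi\phi(v)\nabla u\cdot\nabla v + \big(\chi\nabla\cdot(\phi(v)\nabla v) + a_1 - b_1 u - c_1 v\big)u,
\]
which is again a scalar linear parabolic equation for $u$ with bounded coefficients (using the regularity from (i)–(ii)) and no zeroth-order obstruction to positivity; since $u_0>0$, the strong maximum principle gives $u>0$. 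The main obstacle I anticipate is the bookkeeping in part (i): verifying in detail that the particular triangular, $u$-dependent diffusion matrix together with the Neumann conditions fits precisely into the hypotheses of Amann's framework (normal ellipticity, the correct choice of interpolation spaces matching $C^0\times W^{1,p}$, and the $C^5$ requirement on $\phi$ propagating to the needed smoothness of the Nemytskii operators) — everything else is a fairly standard combination of semigroup smoothing, $L^p$–Schauder bootstrap, and scalar comparison arguments.
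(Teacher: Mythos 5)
Your proposal is correct and follows essentially the same route as the paper: recast (\ref{16}) as a normally parabolic quasilinear system with upper-triangular diffusion matrix and invoke Amann's local existence theory for (i), use Amann's continuation criterion for triangular systems (where the $L^\infty$ bound suffices because the $v$-equation can be bootstrapped first to control $\nabla v$) for (ii), and apply scalar strong maximum principle, Hopf's lemma, and comparison with the constant supersolution $a_2/c_2$ for (iii). The paper's proof is just a terser version of the same argument, citing Theorems 7.3 and 9.3 of Amann's 1990 paper and Theorem 5.2 of the 1993 survey rather than spelling out the bootstrap.
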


\begin{remark}
If $\Omega=(0,L)$, then for any initial data $(u_0,v_0)\in H^1(0,L)\times H^1(0,L)$, (\ref{16}) has a unique solution $(u(x,t),v(x,t))$ defined on $[0,L]\times [0,T_{\max})$ with
$0<T_{\max} \leq \infty$ such that $(u(\cdot,t),v(\cdot,t)) \in C([0,T_{\max}),H^1(0,L)\times H^1(0,L))$ and $(u,v)\in C^{2+\alpha,1+\alpha}_{loc}$, $(x,t)\in [0,1]\times (0,T_{\max})$ with $0<\alpha<\frac{1}{4}$.
\end{remark}

\begin{proof}
Denote $\textbf{w}=(u,v)$ and (\ref{16}) becomes
\begin{equation}\label{22}
\left\{
\begin{array}{ll}
\textbf{w}_t=\nabla \cdot(A(\textbf{w})\nabla \textbf{w})+F(\textbf{w}),~x \in \Omega,~t>0,     \\
\textbf{w}(x,0)=(u_0,v_0),x\in \Omega; \frac{\partial \textbf{w}}{\partial \textbf{n}}=0,~x \in \partial \Omega,t>0,
\end{array}
\right.
\end{equation}
where
 \begin{equation*}A(\textbf{w})=\begin{pmatrix}
 D_1  &  \chi u\phi(v)  \\
  0           &  D_2
  \end{pmatrix} ,~~F(\textbf{w})=\begin{pmatrix}
(a_1-b_1u-c_1v)u \\
(a_2-b_2u-c_2v)v
  \end{pmatrix}.
   \end{equation*}
Since the eigenvalues of $A$ are positive, therefore (\ref{22}) is normally parabolic.  Then \emph{(i)} follows from Theorem 7.3 and 9.3 of \cite{A}.  Moreover, \emph{(ii)} follows from Theorem 5.2 in \cite{A1} since (\ref{22}) is a triangular system.

To prove \emph{(iii)}, we observe that $\underline{u}\equiv0$ and $\underline{v}\equiv0$ are strict sub-solutions and we have from the Strong Maximum Principle and Hopf's boundary point lemma that $u>0$ and $v>0$ on $\bar \Omega$.  On the other hand, $\frac{a_2}{c_2}$ is a super solution and it follows from the Comparison Principle that $v<\frac{a_2}{c_2}$ on $\bar \Omega\times (0,T_{\max})$.  This completes the proof of Theorem \ref{thm21}.
\end{proof}

\begin{lemma}\label{lem22}  Assume that $\Omega$ is a bounded domain in $\mathbb{R}^N$, $N\geq1$.  Let the initial data $(u_0,v_0) \in C^0(\bar \Omega)\times W^{1,p}(\Omega)$, $p>N$, be non-negative and $(u,v)$ be the unique positive solution of (\ref{16}).  Then
\begin{equation}\label{23}
\Vert u(\cdot,t) \Vert_{L^1(\Omega)} \leq C=C(\Vert u_0 \Vert_{L^1},a_1,b_1, \vert \Omega \vert), \forall t\in(0,T_{\max}),
\end{equation}
and
\begin{equation}\label{24}
0\leq v(x,t)\leq C= C(\Vert v_0 \Vert_{\infty},a_2,c_2),\forall t\in(0,T_{\max}).
\end{equation}
\end{lemma}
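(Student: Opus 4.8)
The plan is to prove the two estimates independently. First I would establish the pointwise bound (\ref{24}) on $v$ by a comparison argument, since it requires nothing beyond the maximum principle; then I would derive the $L^1$-bound (\ref{23}) on $u$ by integrating the first equation of (\ref{16}) and exploiting the logistic damping to control the total mass.

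For (\ref{24}): because $b_2,u\ge 0$, the second equation of (\ref{16}) yields the differential inequality $\tau v_t \le D_2\Delta v + (a_2-c_2v)v$ on $\Omega\times(0,T_{\max})$, together with $\partial v/\partial\textbf{n}=0$. Thus $v$ is a subsolution of the scalar logistic problem $\tau w_t = D_2\Delta w + (a_2-c_2 w)w$ under homogeneous Neumann conditions, while the constant $M:=\max\{\Vert v_0\Vert_\infty,\,a_2/c_2\}$ is a supersolution of this problem dominating $v_0$ (note $\Vert v_0\Vert_\infty<\infty$ since $W^{1,p}(\Omega)\hookrightarrow C^0(\bar\Omega)$ for $p>N$). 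The comparison principle then gives $0\le v\le M$ on $\bar\Omega\times(0,T_{\max})$, the lower bound being immediate from $v_0\ge 0$ and the maximum principle (as $v\equiv0$ is a subsolution). The same bound holds in the parabolic-elliptic case $\tau=0$: the elliptic maximum principle applied to $-D_2\Delta v=(a_2-b_2u-c_2v)v\le(a_2-c_2v)v$ again forces $v\le a_2/c_2$.

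For (\ref{23}): using the parabolic regularity furnished by Theorem \ref{thm21}, I would integrate the first equation of (\ref{16}) over $\Omega$. The divergence term drops out, since the no-flux condition $\partial u/\partial\textbf{n}=\partial v/\partial\textbf{n}=0$ makes the total flux $D_1\nabla u+\chi u\phi(v)\nabla v$ have vanishing normal trace on $\partial\Omega$. Setting $y(t):=\int_\Omega u(\cdot,t)\,dx$ and discarding the nonpositive term $-c_1\int_\Omega uv$ (legitimate as $c_1,u,v\ge 0$), I obtain $y'(t)\le a_1 y(t)-b_1\int_\Omega u^2$. Cauchy--Schwarz gives $\int_\Omega u^2\ge|\Omega|^{-1}y(t)^2$, so $y$ obeys the logistic differential inequality
\[
y'(t)\le a_1\,y(t)-\frac{b_1}{|\Omega|}\,y(t)^2 ,
\]
and an elementary ODE comparison yields $y(t)\le\max\{\Vert u_0\Vert_{L^1},\,a_1|\Omega|/b_1\}$ for all $t\in(0,T_{\max})$, which is (\ref{23}).

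I do not anticipate a substantive obstacle here: the statement is essentially a consequence of the maximum principle and the mass structure of the $u$-equation. The only points needing a little care are the justification of differentiation under the integral sign and of the boundary integration by parts — both covered by Theorem \ref{thm21} — together with the standing assumption $b_1>0$, without which the logistic term would not produce a bound. The genuine difficulty lies afterwards: the $L^1$-control of $u$ is far weaker than the $L^\infty$-bound demanded by Theorem \ref{thm21}(ii) for global existence, and bridging this gap in the presence of the cross-diffusion coupling $\chi u\phi(v)\nabla v$ is precisely the role of the semigroup estimates and the Moser--Alikakos iteration carried out in the remainder of the section.
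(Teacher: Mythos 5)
Your proposal is correct and follows essentially the same route as the paper: the bound on $v$ by comparison with a spatially homogeneous supersolution of the logistic problem, and the $L^1$-bound on $u$ by integrating the first equation and exploiting the quadratic damping. If anything, your version is slightly more careful than the paper's, which uses the ODE solution $\bar v(t)$ of (\ref{25}) as the supersolution rather than your constant $\max\{\Vert v_0\Vert_\infty,a_2/c_2\}$, and which invokes ``Gronwall's lemma'' where your explicit Cauchy--Schwarz step $\int_\Omega u^2\ge |\Omega|^{-1}\bigl(\int_\Omega u\bigr)^2$ followed by the logistic ODE comparison is what is actually needed.
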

\begin{proof}
It is equivalent to show that $\int_\Omega u(x,t) dx$ is uniformly bounded in time $t$ since $u(x,t)>0$ according to Theorem \ref{thm21}.  We integrate the first equation in (\ref{16}) over $\Omega$ and have that
\[\frac{d}{dt} \int_\Omega u(x,t)dx=\int_\Omega f(u,v)u dx \leq a_1 \int_\Omega u(x,t) dx-b_1 \int_\Omega u^2(x,t) dx,\]
then we conclude from Gronwall's lemma that
\[\int_\Omega u(x,t)dx\leq e^{-b_1t}  \int_\Omega u_0(x)dx+C \vert \Omega \vert.\]
On the other hand, we denote $\bar{v}(t)$ as the solution of
\begin{equation}\label{25}
\frac{d \bar{v}(t)}{dt}=(a_2-c_2\bar{v})\bar{v},\bar{v}(0)=\max_{\Omega} v_0(x),
\end{equation}
then we solve differential inequality (\ref{25}) and obtain that
 \[\bar{v}(t)\leq \max \Big\{\Big(\frac{2a_2}{c_2}\Big), \Big(\Vert v_0 \Vert_\infty^{-1}+\frac{c_2 t}{2} \Big)^{-1} \Big\}.\]  Obviously, $\bar{v}(t)$ is a super-solution to the second equation of (\ref{16}).  Hence we have that $v(x,t)\leq \bar{v}(t)$ from the Maximum Principle and it implies (\ref{24}).
\end{proof}

Lemma \ref{lem22} provides the $L^\infty$-bound of $v$ and $L^1$-bound of $u$.  To establish the $L^\infty$-bound on $u$, we need to estimate $\Vert \nabla v \Vert_{L^p}$ for some large $p$.  To this end, we shall employ the well-known smoothing properties of the operator $-\Delta+1$, for which \cite{H}, \cite{HW} and \cite{Winkler} are good references.  Applying $\nabla$ on the $v$-equation in (\ref{21}), we have from the embeddings between on the analytic semigroups generated by $-\Delta+1$, for example, Lemma 1.3 in \cite{Winkler}, for all $1\leq p \leq q \leq \infty$, that there exists positive constants $C$ dependent on $\Vert v_0 \Vert _{W^{1,q}(\Omega)}$ and $\Omega$ such that, for all $T\in(0,\infty)$ and any $t\in(0,T)$
\begin{equation}\label{26}
 \Vert v(\cdot,t) \Vert _{W^{1,q}(\Omega)} \leq C\left(1 + \int_0^te^{-\nu(t-s)} (t-s)^{-\frac{1}{2}-\frac{N}{2}(\frac{1}{p}-\frac{1}{q})}   \Vert u(\cdot,s)+1 \Vert_{L^p}   ds\right),
\end{equation}
where $\nu$ is the first Neumann eigenvalue of $-\Delta$ in $\Omega$.  We have applied in (\ref{26}) the uniform boundedness of $\Vert v(\cdot,t)\Vert_{L^\infty}$ in (\ref{24}).

\subsection{Existence of global solutions in one-dimensional domain}

We now proceed to establish the $L^\infty$-estimate of $u$ under the assumption on the initial data $(u_0,v_0)$ in Lemma \ref{22}.  Then the existence of global-in-time classical solutions follows through \emph{(ii)} of Theorem \ref{thm21}.  In particular, we consider (\ref{16}) over one-dimensional domain $\Omega=(0,L)$.  The uniform boundedness of $\Vert u(\cdot,t) \Vert_{L^\infty}$ is a consequence of several lemmas.

\begin{lemma}\label{lem23}
Let $\Omega=(0,L)$ be a bounded interval in $\mathbb{R}^1$.  Assume that the nonnegative initial data $(u_0,v_0)\in H^1(0,L) \times H^1(0,L)$ and suppose that $\phi \in C^5(\mathbb{R},\mathbb{R})$.  Then there exists a positive constant $C$ dependent on the parameters of (\ref{16}) such that, for any $q\in(1,\infty)$
\begin{equation}\label{27}
\Vert v_x(\cdot,t) \Vert_{L^q} \leq C(q), \forall t\in(0,T_{\max}).
\end{equation}
\end{lemma}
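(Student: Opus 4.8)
The plan is to exploit the one-dimensional structure, where $W^{1,q}$ embeds into $L^\infty$ for every $q>1$, together with the smoothing estimate (\ref{26}). The key observation is that in dimension $N=1$ the singularity exponent in (\ref{26}) is $-\frac12-\frac12(\frac1p-\frac1q)$, which is integrable near $s=t$ as long as we keep $\frac1p-\frac1q<1$; combined with the $L^1$-bound (\ref{23}) on $u$ this will let us bootstrap the integrability of $v_x$ from $L^1$ up to any $L^q$. First I would record that, by Lemma \ref{lem22}, $\Vert u(\cdot,t)\Vert_{L^1}\le C$ and $\Vert v(\cdot,t)\Vert_{L^\infty}\le C$ uniformly on $(0,T_{\max})$, and that by Theorem \ref{thm21} the solution is positive and classical on $(0,T_{\max})$, so all the quantities below are finite for each fixed $t$ and it suffices to bound them independently of $t$.

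The bootstrap proceeds in finitely many steps. Start from $p_0=1$: applying (\ref{26}) with $p=p_0=1$ and any target $q_1$ satisfying $\frac12+\frac12(1-\frac1{q_1})<1$, i.e. $q_1<\infty$ is unrestricted here since $\frac12+\frac12<1$ fails — so actually with $p=1$ the exponent is $-\frac12-\frac12(1-\frac1q)=-1+\frac1{2q}$, which is $>-1$ for every finite $q$, hence integrable. Thus a single application of (\ref{26}) with $p=1$, using $\Vert u(\cdot,s)+1\Vert_{L^1}\le C$ and $\int_0^t e^{-\nu(t-s)}(t-s)^{-1+\frac1{2q}}\,ds\le\int_0^\infty e^{-\nu\sigma}\sigma^{-1+\frac1{2q}}\,d\sigma<\infty$, already gives $\Vert v_x(\cdot,t)\Vert_{L^q}\le C(q)$ for every $q\in(1,\infty)$. (If one prefers to be careful about the borderline behavior as $q\to\infty$, one can instead first bound $\Vert v_x\Vert_{L^{q_1}}$ for a fixed $q_1$, deduce an $L^{q_1}$-bound, and iterate; but in one space dimension the first pass suffices because the kernel exponent stays strictly above $-1$.) I would then use the constant $C$ from (\ref{23}), the uniform $L^\infty$-bound (\ref{24}), and the boundedness of $\Omega=(0,L)$ to absorb all lower-order terms, concluding (\ref{27}).

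The main obstacle — really the only delicate point — is making sure the time integral $\int_0^t e^{-\nu(t-s)}(t-s)^{-\frac12-\frac12(\frac1p-\frac1q)}\Vert u(\cdot,s)+1\Vert_{L^p}\,ds$ is bounded \emph{uniformly in $t\in(0,T_{\max})$} rather than just finite for each $t$; this is where the exponential factor $e^{-\nu(t-s)}$ and the time-independent bound $\Vert u(\cdot,s)\Vert_{L^1}\le C$ do the work, since $\int_0^\infty e^{-\nu\sigma}\sigma^{-1+\frac1{2q}}\,d\sigma$ is a convergent improper integral (the singularity at $\sigma=0$ is integrable, the tail is controlled by $e^{-\nu\sigma}$). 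One also has to check that the $L^1$-bound on $u$ is indeed what feeds into (\ref{26}) with $p=1$ — it is, and no higher integrability of $u$ is needed at this stage, which is precisely why the one-dimensional case is clean. Thus (\ref{27}) follows. \qed
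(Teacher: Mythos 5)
Your proposal is correct and follows essentially the same route as the paper: both apply the semigroup estimate (\ref{26}) with $p=N=1$, observe that the kernel exponent $-1+\frac{1}{2q}$ is strictly greater than $-1$ so the time integral $\int_0^t e^{-\nu(t-s)}(t-s)^{-1+\frac{1}{2q}}\,ds$ is bounded uniformly in $t$, and then invoke the uniform $L^1$-bound on $u$ from Lemma \ref{lem22} to conclude (\ref{27}). The bootstrap you sketch at the outset is unnecessary, as you yourself note, and the paper likewise dispenses with it.
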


\begin{proof}
We choose $p=N=1$ in (\ref{26}) and obtain
\begin{equation}\label{28}
\Vert v_x(\cdot,t) \Vert _{L^q(0,L)} \leq C\left(1+ \int_0^te^{-\nu(t-s)} (t-s)^{-\frac{1}{2}-\frac{1}{2}(1-\frac{1}{q})}   \Vert u(\cdot,s) \Vert_{L^1(0,L)}   ds\right),
\end{equation}
where $C$ depends on $\Vert v_0 \Vert_{W^{1,q}(0,L)}$.  On the other hand, we see that for any $q\in(1,\infty)$
\[\sup_{t\in(0,\infty)}\int_0^te^{-\nu(t-s)} (t-s)^{-1+\frac{1}{2q}} ds<C_0(q),\]
where $C_0$ is a positive constant independent of $q$, hence it follows from (\ref{28}) that
\begin{equation}\label{29}
\Vert  v_x(\cdot,t) \Vert _{L^q(0,L)} \leq C\Big(1+ \sup_{s\in(0,t)} \Vert u(\cdot,s) \Vert_{L^1(0,L)}\Big).
\end{equation}
Then (\ref{27}) is an immediate consequence of (\ref{23}) and (\ref{29}).
\end{proof}

\begin{lemma}\label{lem24}Suppose that the assumptions in Lemma \ref{lem23} hold and let $(u(x,t),v(x,\break t))$ be a positive classical solution of (\ref{16}).  Then for each $p\in(2,\infty)$, there exists a positive constant $C(p)$ such that
\begin{equation}\label{210}
\Vert u(\cdot,t) \Vert_{L^p}\leq C(p),\forall t\in(0,T_{\max}).
\end{equation}
\end{lemma}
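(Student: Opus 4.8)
The plan is to derive an $L^p$-bound for $u$ by testing the $u$-equation with $u^{p-1}$ and controlling the advection term using the $W^{1,q}$-estimate on $v$ from Lemma \ref{lem23}. First I would multiply the first equation of (\ref{16}) by $u^{p-1}$ and integrate over $(0,L)$, obtaining, after integration by parts and using the Neumann boundary condition,
\begin{equation*}
\frac{1}{p}\frac{d}{dt}\int_0^L u^p\,dx + D_1(p-1)\int_0^L u^{p-2}u_x^2\,dx = (p-1)\int_0^L \chi u^{p-1}\phi(v)u_x v_x\,dx + \int_0^L u^{p-1} f(u,v)\,dx.
\end{equation*}
Since $0\le v\le a_2/c_2$ by Lemma \ref{lem22}, $\phi(v)$ is bounded; writing $u^{p-1}u_x = \frac{2}{p}u^{p/2}(u^{p/2})_x$ and applying Young's inequality, the advection term is bounded by $\frac{D_1(p-1)}{2}\int_0^L u^{p-2}u_x^2 + C(p)\int_0^L u^p v_x^2\,dx$. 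The first piece is absorbed by the diffusion term, and the reaction term contributes at most $a_1\int_0^L u^p\,dx$ (the $-b_1 u^{p+1}$ term has a good sign). So it remains to control $\int_0^L u^p v_x^2\,dx$.

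The key step is to estimate $\int_0^L u^p v_x^2\,dx$ by Hölder's inequality: for a suitable conjugate pair, $\int_0^L u^p v_x^2 \le \|u^p\|_{L^{r}}\|v_x^2\|_{L^{r'}} = \|u\|_{L^{pr}}^{p}\|v_x\|_{L^{2r'}}^{2}$. Since Lemma \ref{lem23} gives $\|v_x\|_{L^q}\le C(q)$ for \emph{every} finite $q$, the factor $\|v_x\|_{L^{2r'}}^2$ is under control for any choice of $r'$. Then I would use the one-dimensional Gagliardo--Nirenberg interpolation inequality to bound $\|u\|_{L^{pr}}^p = \|u^{p/2}\|_{L^{2r}}^2$ in terms of $\|(u^{p/2})_x\|_{L^2}$ and $\|u^{p/2}\|_{L^{2/p}}$ (the latter being controlled by the $L^1$-bound (\ref{23}) on $u$). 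In one space dimension the interpolation exponent on the gradient term can be made strictly less than $2$ by choosing $r$ close enough to $1$ (equivalently $r'$ large), so that after a further Young's inequality the gradient term $\|(u^{p/2})_x\|_{L^2}^2 = \frac{p^2}{4}\int_0^L u^{p-2}u_x^2\,dx$ is again absorbed into the diffusion term, leaving a bound of the form $\frac{d}{dt}\int_0^L u^p\,dx \le C_1\int_0^L u^p\,dx + C_2$ — no, more precisely, one wants $\frac{d}{dt}y(t)\le -\delta y(t)^{?} + C$ or at least a Gronwall-type inequality that does not blow up; adding and subtracting $\int u^p$ and using the $L^1$-bound one closes it as $\frac{d}{dt}\int_0^L u^p\,dx + \int_0^L u^p\,dx \le C(p)$, whence (\ref{210}) follows by Gronwall's lemma.

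The main obstacle is the bookkeeping in the interpolation step: one must verify that the exponent of $\|(u^{p/2})_x\|_{L^2}$ produced by Gagliardo--Nirenberg is strictly below $2$ for an admissible choice of the Hölder exponent $r'<\infty$, so that absorption into the dissipation is legitimate for \emph{every} $p\in(2,\infty)$. This is exactly where the dimension $N=1$ is used decisively — the embedding $W^{1,2}(0,L)\hookrightarrow L^\infty(0,L)$ gives the needed room, and the fact that $\|v_x\|_{L^q}$ is finite for all $q<\infty$ (not merely for $q$ up to some threshold) is what makes the argument go through without any smallness or largeness condition on the parameters. The remaining computations — tracking the $p$-dependent constants, applying Young's inequality with the right split — are routine.
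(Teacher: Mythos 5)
Your proposal is correct, and it reaches (\ref{210}) by a genuinely different closing mechanism than the paper. Both proofs start identically: test the $u$-equation with $u^{p-1}$, integrate by parts, and invoke Lemma \ref{lem23} to control $\Vert v_x\Vert_{L^q}$ for an arbitrarily large finite $q$. The divergence is in how the resulting differential inequality is closed. The paper keeps the logistic absorption term $-b_1\int_0^L u^{p+1}dx$, estimates the advection term by a three-factor H\"older inequality as $C\Vert (u^{p/2})_x\Vert_{L^2}\Vert u\Vert_{L^{p+1}}^{p/2}\Vert v_x\Vert_{L^{2(p+1)}}$, absorbs the gradient factor into the dissipation by Young, and is left with $y_p'\leq -\tilde C_1 y_p^{(p+1)/p}+\tilde C_0$ for $y_p=\int u^p$, whose solutions are uniformly bounded; this crucially uses $b_1>0$ (which does hold under the standing assumption (\ref{12})). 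You instead discard the $-b_1u^{p+1}$ term and control $\int u^p v_x^2$ by H\"older plus the one-dimensional Gagliardo--Nirenberg inequality for $u^{p/2}$ anchored on the $L^1$-bound (\ref{23}); your exponent worry is actually moot, since with $w=u^{p/2}$ and lower norm $\Vert w\Vert_{L^{2/p}}$ the interpolation exponent $\theta=\frac{pr-1}{r(p+1)}$ is strictly below $1$ for every $r\geq 1$, so the gradient power $2\theta<2$ and the absorption is always legitimate. Your final step, upgrading $\frac{d}{dt}\int u^p\leq C\int u^p+C$ to $\frac{d}{dt}\int u^p+\int u^p\leq C$ by spending a remaining piece of the dissipation through the same Ladyzhenskaya-type inequality, is exactly the device the paper itself uses later in the proof of Theorem \ref{thm25} (the estimate (\ref{215})), so it is fully consistent with the paper's toolkit. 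What each approach buys: the paper's argument is shorter but leans on the self-limitation $b_1>0$; yours is slightly longer but works verbatim when $b_1=0$ and is the standard route in chemotaxis systems without logistic damping.
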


\begin{proof}
For $p>2$, we multiply the first equation of (\ref{16}) by $u^{p-1}$ and integrate it over $(0,L)$, then we obtain from the integration by parts and the $L^\infty-$boundness of $v$ in (\ref{24}) that
{\small\begin{align}
&\frac{1}{p}\frac{d}{dt}\int_0^L u^p dx=\int_0^L u^{p-1}u_t dx  \nonumber \\
=&\int_0^L u^{p-1} (D_1 u_x+\chi u\phi(v) v_x)_x dx+\int_0^L (a_1-b_1u-c_1v)u^p dx      \nonumber \\
=& -\frac{4D_1(p-1)}{p^2}\int_0^L \vert  (u^\frac{p}{2})_x \vert^2 dx-\chi\int_0^L (u_x^{p-1})u\phi(v) v_x dx+\int_0^L (a_1-b_1u-c_1v)u^p dx \nonumber \\
\leq& -\frac{4D_1(p-1)}{p^2}\int_0^L \vert (u^\frac{p}{2})_x \vert^2 dx+\frac{2(p-1)\chi}{p}\int_0^L u^\frac{p}{2} \phi(v) (u^\frac{p}{2})_x v_x dx\nonumber\\&\ \ \ \ \ \ \ \ \ \ -C_1\int_0^L u^{p+1}dx+C_0 \nonumber  
\end{align}}{\small\begin{align}\label{211}\leq& -\frac{4D_1(p-1)}{p^2}\int_0^L \vert (u^\frac{p}{2})_x \vert^2 dx+C_2\int_0^L u^\frac{p}{2} \vert (u^\frac{p}{2})_x\vert~ \vert v_x\vert dx -C_1\int_0^L  u^{p+1}dx+C_0   \\
\leq& -\frac{4D_1(p-1)}{p^2}\Vert (u^\frac{p}{2})_x \Vert^2_{L^2}+C_2\Vert (u^\frac{p}{2})_x \Vert_{L^2} \Vert u^\frac{p}{2} \Vert_{L^{2(p+1)/p}} \Vert v_x\Vert_{L^{2(p+1)}}-C_1\Vert u \Vert^{p+1}_{L^{p+1}}+C_0 \nonumber \\ \nonumber
=& -\frac{4D_1(p-1)}{p^2}\Vert (u^\frac{p}{2})_x \Vert^2_{L^2}+C_2\Vert (u^\frac{p}{2})_x \Vert_{L^2}\Vert u \Vert^\frac{p}{2}_{L^{p+1}} \Vert v_x\Vert_{L^{2(p+1)}}-C_1\Vert u \Vert^{p+1}_{L^{p+1}}+C_0, \nonumber \\ \nonumber
\end{align}}
where $C_0$, $C_1$ and $C_2$ are positive constants that depend on $p$ and may vary from line to line.  We have applied the H\"older's inequality in the last inequality and used the fact $\Vert u^\frac{p}{2} \Vert_{L^{\frac{2(p+1)}{p}}}=\Vert u \Vert^\frac{p}{2}_{L^{p+1}}$ in the last identity of (\ref{211}).  Moreover, in light of (\ref{27}), we can further estimate (\ref{211}) by the Young's inequality
\[C_2\Vert (u^\frac{p}{2})_x \Vert_{L^2}\Vert u \Vert^\frac{p}{2}_{L^{p+1}} \Vert v_x\Vert_{L^{2(p+1)}}\leq \frac{4D_1(p-1)}{p^2}\Vert (u^\frac{p}{2})_x \Vert^2_{L^2} +C_3 \Vert u \Vert^p_{L^{p+1}}, \]
where $C_3$ is a positive constant that may depend on $p$, then we see that (\ref{211}) implies that
\begin{equation}\label{212}
\frac{1}{p}\frac{d}{dt}\int_0^L u^p dx \leq C_3\Vert u \Vert^p_{L^{p+1}}-C_1 \Vert u\Vert^{p+1}_{L^{p+1}}+C_0.
\end{equation}
Denoting $y_p(t)=\int_0^L u^p dx$, we apply the H\"older's in (\ref{212}) to have that
\[y_p'(t)\leq -\tilde{C}_1y_p^\frac{p+1}{p}+\tilde{C}_0,~y_p(0)=\Vert u_0 \Vert^p_{L^p}.\]
Solving this differential inequality gives us that $y_p(t)\leq C(p)$ for all $t\in (0,T_{\max})$.  This completes the proof of (\ref{210}).
\end{proof}
By taking $p$ sufficiently large but fixed in (\ref{26}), we see that the following result is a quick implication of Lemma \ref{lem24}.
\begin{corollary}\label{cor1}
Under the conditions in Lemma \ref{lem22}, we have
\begin{equation}\label{213}
\Vert v_x(\cdot,t) \Vert_{L^\infty} \leq C,\forall t\in(0,T_{\max}),
\end{equation}
where $C$ is a positive constant dependent on the parameters in (\ref{16}).
\end{corollary}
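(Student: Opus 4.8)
The plan is to feed $q=\infty$ and a fixed, sufficiently large $p\in(2,\infty)$ into the smoothing estimate (\ref{26}) and then to absorb the resulting time integral using the uniform-in-time $L^p$ bound on $u$ supplied by Lemma \ref{lem24}. Concretely, taking $N=1$ and $q=\infty$ in (\ref{26}) gives, for every $t\in(0,T_{\max})$,
\[
\Vert v(\cdot,t)\Vert_{W^{1,\infty}(0,L)}\le C\Bigl(1+\int_0^t e^{-\nu(t-s)}(t-s)^{-\frac12-\frac1{2p}}\,\Vert u(\cdot,s)+1\Vert_{L^p(0,L)}\,ds\Bigr),
\]
where $C$ depends on $\Vert v_0\Vert_{W^{1,\infty}}$ and $\Omega$, and where the exponent of $(t-s)$ is $-\frac12-\frac N2\cdot\frac1p=-\frac12-\frac1{2p}$. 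Since $p>1$, this exponent is strictly larger than $-1$, so the kernel $(t-s)^{-\frac12-\frac1{2p}}$ is integrable near the endpoint $s=t$.

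Next I would fix any $p\in(2,\infty)$ and apply Lemma \ref{lem24}, which yields $\sup_{s\in(0,T_{\max})}\Vert u(\cdot,s)\Vert_{L^p(0,L)}\le C(p)$, hence $\Vert u(\cdot,s)+1\Vert_{L^p}\le C(p)+\vert\Omega\vert^{1/p}=:C'$ uniformly in $s$. Pulling this constant out of the integral gives
\[
\Vert v(\cdot,t)\Vert_{W^{1,\infty}(0,L)}\le C\Bigl(1+C'\sup_{t>0}\int_0^t e^{-\nu(t-s)}(t-s)^{-\frac12-\frac1{2p}}\,ds\Bigr),
\]
and the remaining supremum is finite: substituting $\sigma=t-s$ bounds it by $\int_0^\infty e^{-\nu\sigma}\sigma^{-\frac12-\frac1{2p}}\,d\sigma=\nu^{-\frac12+\frac1{2p}}\,\Gamma\!\bigl(\tfrac12-\tfrac1{2p}\bigr)<\infty$, the two conditions $\nu>0$ and $\frac12+\frac1{2p}<1$ guaranteeing convergence at $\infty$ and at $0$ respectively. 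Therefore $\Vert v(\cdot,t)\Vert_{W^{1,\infty}(0,L)}\le C$ uniformly in $t\in(0,T_{\max})$, and since $\Vert v_x(\cdot,t)\Vert_{L^\infty}\le\Vert v(\cdot,t)\Vert_{W^{1,\infty}(0,L)}$, estimate (\ref{213}) follows.

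I do not expect a genuine obstacle here — this is essentially a bookkeeping consequence of Lemma \ref{lem24}. The only point that needs attention is the integrability of the singular kernel at $s=t$ once $q=\infty$ is chosen, which requires $\frac N2\cdot\frac1p<\frac12$; in dimension $N=1$ this holds for every $p>1$, so the $L^p$ bounds of Lemma \ref{lem24} are more than enough. This is precisely where the one-dimensionality of $\Omega$ enters (the same step would fail for $q=\infty$ unless $p>N$), and it is why the argument is carried out over $\Omega=(0,L)$. One should also check the trivial admissibility condition $1\le p\le q=\infty$ in (\ref{26}), which is automatic once $p$ is taken finite.
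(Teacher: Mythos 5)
Your proposal is correct and follows exactly the paper's route: the paper also obtains (\ref{213}) by taking $p$ large but fixed and $q=\infty$ in the smoothing estimate (\ref{26}) and invoking the uniform $L^p$ bound of Lemma \ref{lem24}. Your write-up merely makes explicit the kernel integrability check $\frac12+\frac{1}{2p}<1$ that the paper leaves implicit.
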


Now we are ready to present our first main result concerning the global existence of bounded positive classical solutions to (\ref{16}).
\begin{theorem}\label{thm25}
Let $\Omega=(0,L)$ and $\phi \in C^5(\mathbb{R},\mathbb{R})$.  Then for any positive initial data $(u_0,v_0)\in H^1(0,L)\times H^1(0,L)$, system (\ref{16}) has a unique bounded positive solution $(u(x,t),v(x,t))$ defined on $[0,L]\times [0,\infty)$ such that $(u(\cdot,t),v(\cdot,t)) \in C([0,\infty),H^1(0,L)\times H^1(0,L))$ and $(u,v)\in C^{2+\alpha,1+\alpha}_{loc}([0,1]\times (0,\infty))$ for any $0<\alpha<\frac{1}{4}$.
\end{theorem}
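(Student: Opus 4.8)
The plan is to supply the one missing ingredient---a bound on $\|u(\cdot,t)\|_{L^\infty(0,L)}$ that is uniform on $(0,T_{\max})$---and then invoke the extensibility criterion \emph{(ii)} of Theorem \ref{thm21}. Everything else is already available: Lemma \ref{lem22} gives $0\le v\le C$, hence $\phi(v)$ is bounded since $\phi\in C^5$; Lemma \ref{lem24} gives $\|u(\cdot,t)\|_{L^p(0,L)}\le C(p)$ for every $p\in(2,\infty)$ uniformly in $t$; and Corollary \ref{cor1} gives $\|v_x(\cdot,t)\|_{L^\infty(0,L)}\le C$ uniformly in $t$. Consequently, in the Duhamel representation (\ref{21}) the advective source $\chi u\phi(v)v_x$ is bounded in $L^p(0,L)$ uniformly in $t$ for every large $p$, and the reactive source $D_1u+f(u,v)$, which is at most quadratic in $u$, is likewise bounded in $L^p(0,L)$ uniformly in $t$ (one uses the $L^{2p}$-bound on $u$).

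First I would recall the standard smoothing estimates for the Neumann semigroup $\{e^{-A\sigma}\}_{\sigma\ge0}$ generated by $A=1-\Delta$ on $(0,L)$ (see, e.g., Lemma 1.3 of \cite{Winkler}): there are constants $C,\lambda>0$ so that, for $1\le p\le\infty$ and $\sigma>0$,
\[
\|e^{-A\sigma}w\|_{L^\infty}\le C\,\sigma^{-\frac{1}{2p}}e^{-\lambda\sigma}\|w\|_{L^p},\qquad
\|\nabla\cdot e^{-A\sigma}w\|_{L^\infty}\le C\,\sigma^{-\frac12-\frac{1}{2p}}e^{-\lambda\sigma}\|w\|_{L^p}.
\]
Because the spatial dimension is $N=1$, the exponents $\tfrac1{2p}$ and $\tfrac12+\tfrac1{2p}$ are both strictly less than $1$ for every $p>1$, so $\int_0^\infty\sigma^{-\frac1{2p}}e^{-\lambda\sigma}\,d\sigma<\infty$ and $\int_0^\infty\sigma^{-\frac12-\frac1{2p}}e^{-\lambda\sigma}\,d\sigma<\infty$. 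Applying these estimates term by term in (\ref{21})---the linear term being controlled by $\|e^{D_1(\Delta-1)t}u_0\|_{L^\infty}\le e^{-D_1t}\|u_0\|_{L^\infty}$, with $\|u_0\|_{L^\infty}<\infty$ because $H^1(0,L)\hookrightarrow C([0,L])$---then yields $\sup_{t\in(0,T_{\max})}\|u(\cdot,t)\|_{L^\infty}\le C$ with $C$ independent of $T_{\max}$. (Equivalently one may run a Moser--Alikakos iteration on the $L^p$-estimates of Lemma \ref{lem24}; in one space dimension the semigroup argument is the shorter route.)

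Combining this with the $L^\infty$-bound on $v$ from (\ref{24}), we get that $\sup_{s\in(0,t)}\|(u,v)(\cdot,s)\|_{L^\infty}$ is finite for every $t\in(0,T_{\max})$, so Theorem \ref{thm21}\emph{(ii)} forces $T_{\max}=\infty$ and produces a global classical solution; positivity of $u$ and $v$ follows from the Strong Maximum Principle and Hopf's boundary point lemma (cf. Theorem \ref{thm21}\emph{(iii)}), and uniqueness from Theorem \ref{thm21}\emph{(i)}. The asserted regularity $(u,v)\in C([0,\infty),H^1(0,L)\times H^1(0,L))$ and $(u,v)\in C^{2+\alpha,1+\alpha}_{loc}([0,L]\times(0,\infty))$ for any $0<\alpha<\tfrac14$ is then obtained exactly as in the Remark following Theorem \ref{thm21}: with $(u,v)$ bounded uniformly in space and time, parabolic $L^p$- and Schauder estimates on slabs $[0,L]\times[t_0,t_0+1]$ upgrade the solution to the stated H\"older class, while the analytic-semigroup bounds used above give continuity into $H^1$.

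I do not expect a genuine obstacle here: the substantive estimates were carried out in Lemmas \ref{lem23}--\ref{lem24} and Corollary \ref{cor1}. The only point requiring care is that all constants in the semigroup estimates be kept independent of $T_{\max}$, so that the resulting bound is genuinely global; this is ensured by the exponential factors $e^{-\lambda\sigma}$---equivalently by the shift $+1$ in $A=1-\Delta$---which make the relevant convolution kernels integrable over all of $(0,\infty)$.
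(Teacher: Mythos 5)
Your proposal is correct, but it replaces the paper's central device with a different one. The paper proves the uniform $L^\infty$ bound on $u$ by the Moser--Alikakos iteration: it reruns the $L^p$ energy estimate of Lemma \ref{lem24} keeping the $p$-dependence of every constant explicit, invokes the Gagliardo--Ladyzhenskaya--Nirenberg inequality with a carefully tuned $\epsilon=\epsilon(p)$, derives the recursion $M(p)\le\bigl(4K+4Kp\chi/D_1\bigr)^{1/p}M(p/2)$, and shows the resulting infinite product over $p=2^i$ converges (see (\ref{214})--(\ref{218})). You instead feed the uniform-in-time bounds $\Vert u\Vert_{L^p}\le C(p)$, $\Vert v_x\Vert_{L^\infty}\le C$ and $\Vert v\Vert_{L^\infty}\le C$ into the Duhamel representation (\ref{21}) and use the $L^p\to L^\infty$ smoothing of the Neumann semigroup; since $N=1$ the singularities $\sigma^{-\frac{1}{2p}}$ and $\sigma^{-\frac12-\frac{1}{2p}}$ are integrable against $e^{-\lambda\sigma}$ on $(0,\infty)$ for any $p>1$, so the bound is uniform in $T_{\max}$. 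Both routes rest on the same preliminary lemmas, and yours is arguably shorter in one space dimension precisely because the gradient-smoothing exponent stays strictly below $1$; the paper's iteration has the advantage of avoiding the variation-of-constants formula altogether and of transferring verbatim to the multi-dimensional parabolic--elliptic case (it is reused in the proof of Theorem \ref{thm27}), where a pure semigroup argument would require a bootstrap between $L^p$ bounds on $u$ and $W^{1,q}$ bounds on $v$. The remainder of your argument---invoking Theorem \ref{thm21}\emph{(ii)} for globality, the maximum principle for positivity, and parabolic $L^p$/Schauder estimates for the stated regularity---coincides with the paper's.
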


\begin{proof}To prove this Theorem, we need to show that $\Vert u(\cdot,t)\Vert_{L^\infty}$ is uniformly bounded for all $t\in(0,T_{\max})$, then we must have that $T_{\max}=\infty$ and Theorem \ref{thm25} follows from \emph{(ii)} of Theorem \ref{thm21} and Corollary \ref{cor1}.  Moreover, one can apply parabolic boundary $L^p$ estimates and Schauder estimates to show that $u_t,v_t$ and all spatial partial derivatives of $u$ and $v$ up to order two are bounded on $[0,L] \times (0,\infty)$, and then $(u,v)$ have the regularities as stated in Theorem \ref{thm25}.

Without loss of our generality, we assume in (\ref{24}) and (\ref{213}) that $\Vert \phi(v) \Vert_{L^\infty}\leq 1 \text{ and } \Vert v_x \Vert_{L^\infty} \leq 1$.  Through the same calculations that lead to the proof of Lemma \ref{lem24} and using the fact $u\geq0,v\geq0$, we obtain
\begin{eqnarray}\label{214}
&&\frac{1}{p}\frac{d}{dt}\!\!\int_0^L\!\!\!\! u^p dx\leq \!\!-\frac{4D_1(p\!\!-\!\!1)}{p^2}\!\!\int_0^L \!\!\!\! \vert (u^\frac{p}{2})_x \vert^2 dx\!\!+\!\!\frac{2(p\!\!-\!\!1)\chi}{p}\!\!\int_0^L \!\!\!\!u^\frac{p}{2} \vert (u^\frac{p}{2})_x \vert dx\!\!+\!\!a_1\!\!\!\!\int_0^L\!\! u^p dx  \nonumber \\
&& \leq-\frac{4D_1(p-1)}{p^2}\!\!\int_0^L \!\!\!\!\vert (u^\frac{p}{2})_x \vert^2 dx\!\!+\!\!\frac{(p-1)\chi}{p}\!\!\int_0^L\!\!\!\! \Big(\frac{2D_1}{p\chi}\vert (u^\frac{p}{2})_x \vert^2\!\!+\!\!\frac{p\chi}{2D_1}u^p \Big) dx\!\!+\!\!a_1\!\!\int_0^L\!\!\!\! u^p dx\nonumber \\
&& \leq-\frac{2D_1(p-1)}{p^2}\!\!\int_0^L \!\!\vert (u^\frac{p}{2})_x \vert^2 dx+\Big(\frac{(p-1)\chi^2}{2D_1}+a_1\Big)\!\!\int_0^L\!\! u^p dx,
\end{eqnarray}
where we have used a Young's inequality in third line of (\ref{214}).  To estimate $\int_0^L u^p$, we shall apply in (\ref{214}) the following estimate (P. 63 in \cite{LSU} and Corollary 1 in \cite{CKWW} with $d=1$) due to Gagliardo-Ladyzhenskaya-Nirenberg inequality and the Young's inequality that, for any $u\in H^1(0,L)$ and any $\epsilon>0$
\[\Vert u^\frac{p}{2}\Vert^2_{L^2(0,L)}\leq \epsilon \Vert (u^\frac{p}{2})_x \Vert^2_{L^2(0,L)}+K \big(1+ \epsilon^{-\frac{1}{2}}\big)\Vert u^\frac{p}{2}\Vert^2_{L^1(0,L)},\]
where $K$ only depends on $L$.  Choosing $\epsilon=\frac{2D_1^2(p-1)}{p^2((p-1)\chi^2+2a_1D_1)}$ such that $\frac{D_1(p-1)}{p^2\epsilon}=\frac{(p-1)\chi^2}{2D_1}+a_1$, we obtain that
\begin{eqnarray}\label{215}
&&\Big(\frac{(p-1)\chi^2}{2D_1}+a_1\Big)\!\!\int_0^L\!\! u^p dx
= \frac{2D_1 (p-1)}{p^2\epsilon}  \!\!\int_0^L\!\! u^p dx-\Big(\frac{(p-1)\chi^2}{2D_1}+a_1\Big)\!\!\int_0^L \!\!u^p dx \nonumber   \\
\!\!\!\!\!\!\!\!\!\!\leq\!\!\!\!\!\!\!\!\!\! && \frac{2D_1 (p-1)}{p^2} \int_0^L \vert (u^\frac{p}{2})_x \vert^2 dx+\frac{2D_1(p-1)K \big(1+ \epsilon^{-\frac{1}{2}}\big)}{p^2\epsilon}\Big(\int_0^L u^\frac{p}{2} dx\Big)^2  \\
&&-\Big(\frac{(p-1)\chi^2}{2D_1}+a_1\Big)\int_0^L u^p dx.\nonumber\\  \nonumber
\end{eqnarray}
It follows from (\ref{214}) and (\ref{215}) that
\[\frac{d}{dt}\int_0^L u^p dx\leq - \frac{p(p-1)\chi^2}{2D_1} \int_0^L u^p dx +\frac{2D_1(p-1)K \big(1+ \epsilon^{-\frac{1}{2}}\big)}{p \epsilon}\Big(\int_0^L u^\frac{p}{2} dx\Big)^2,p\geq2.\]
On the other hand, we can choose $p_0$ large such that $\epsilon=\frac{2D_1^2(p-1)}{p^2\big((p-1)\chi^2+2a_1D_1\big)}>\big(\frac{D_1}{p\chi}\big)^2$ and $\frac{1+\epsilon^{-\frac{1}{2}}}{\epsilon}<\frac{1+\frac{p\chi}{D_1}}{D_1^2/p^2\chi^2}$ for all $p\geq p_0$, then we have that
\begin{equation}\label{216}
\frac{d}{dt}\!\!\int_0^L\!\! u^p dx\leq - \frac{p(p-1)\chi^2}{2D_1} \!\!\int_0^L\!\! u^p dx +\frac{2p(p-1)K\chi^2(1+\frac{p\chi}{D_1})}{D_1}\Big(\!\!\int_0^L\!\! u^\frac{p}{2}dx\Big)^2.
\end{equation}
Denote $\kappa=\frac{p(p-1)\chi^2}{2D_1}$.  For each $T\in(0,\infty)$, we solve the differential inequality (\ref{216}) for all $t\in(0,T)$ and obtain that
\begin{eqnarray}\label{217}
\int_0^L u^p dx \!\!\!\!\! &\leq&\!\!\!\!\!  e^{-\kappa t} \int_0^L u_0^p dx+\frac{2p(p-1)K\chi^2}{D_1}\Big(1+\frac{p\chi}{D_1}\Big)\int_0^t e^{-\kappa(t-s)}\Big(\int_0^L u^\frac{p}{2} dx\Big)^2ds \nonumber\\
\!\!\!\!\! &\leq&\!\!\!\!\!  \int_0^L u_0^p dx+4K\left(1+\frac{p\chi}{D_1}\right) \sup_{t\in(0,T)}\Big(\int_0^L u^\frac{p}{2}dx\Big)^2,p\geq p_0 \\ \nonumber
\end{eqnarray}
We now employ the Moser-Alikakos iteration \cite{A0} to establish $L^\infty$-estimate of $u$.  To this end, we introduce the function
\[M(p)=\max\big\{\Vert u_0 \Vert_{L^\infty},~\sup_{t\in(0,T)}\Vert u(\cdot,t) \Vert_{L^p}  \big\},\]
then it follows from (\ref{217}) that
\[M(p)\leq  \left(4K+\frac{4Kp\chi}{D_1}\right)^\frac{1}{p}M(p/2),\forall p>p_0.\]
Taking $p=2^i$, $i=1,2,...$ and choosing $p_0=2^{i_0}$ for $i_0$ large, we have
\begin{eqnarray}\label{218}
M(2^i)\!\!\!\!\! &\leq&\!\!\!\!\! \Big(\!4K\!+\!\frac{4K\chi 2^i}{D_1}\!\Big)^{2^{-i}}\!\!\!\!M(2^{i-1})\!\leq\! \Big(\!4K\!+\!\frac{4K\chi 2^i}{D_1}\!\Big)^{2^{-i}} \!\!\!\Big(\!4K\!+\!\frac{4K\chi 2^{i-1}}{D_1}\!\Big)^{2^{-(i-1)}}\!\!\!\!\!M(2^{i-2}) \nonumber\\
\!\!\!\!\! &\leq&\!\!\!\!\! M(2^{i_0}) \prod_{j=i_0+1}^i \Big(4K+\frac{4K\chi 2^j}{D_1}\Big)^{2^{-j}}\leq M(2^{i_0}) \prod_{j=i_0+1}^i \Big(4K2^j+\frac{4K\chi 2^j}{D_1}\Big)^{2^{-j}} \nonumber \\
\!\!\!\!\! &\leq&\!\!\!\!\!  M(2^{i_0}) \Big(4K+\frac{4K\chi }{D_1}\Big) \prod_{j=i_0+1}^i (2^j)^{2^{-j}} \leq M(2^{i_0}) \Big(4K+\frac{4K\chi }{D_1}\Big) 2^{\sum\limits_{j=i_0+1}^i j2^{-j}} \\
\!\!\!\!\! &\leq&\!\!\!\!\!  C\Big(1+\frac{\chi }{D_1}\Big) M(2^{i_0}), \nonumber
\end{eqnarray}
where $C$ is a constant that only depends on $L$ and $M(2^{i_0})$ is bounded for all $t\in(0,\infty)$ in light of (\ref{23}) and (\ref{210}).  Sending $i\rightarrow \infty$ in (\ref{218}), we finally conclude from Lemma \ref{lem24} and (\ref{218}) that
\begin{equation}\label{219}
\Vert u(\cdot,t) \Vert_{L^\infty}\leq C,\forall t\in[0,\infty),
\end{equation}
and this completes the proof of Theorem \ref{thm25}.
\end{proof}

\subsection{Global solutions of parabolic-elliptic system in $N$-dimensional domain}

In this section, we establish the global existence of (\ref{16}) for $\Omega$ being a bounded multi-dimensional domain in $\mathbb{R}^N$, $N\geq 2$.  In particular, we consider model (\ref{16}) with $\tau=0$, which approximates an competition relationship that $v$ diffuses much faster than $u$.  Same as the analysis for the 1D domain, our global existence result is a consequence of several Lemmas.  Our first step is to establish the $\Vert u(\cdot,t) \Vert_{L^p}$-bounds for $p$ large in the following lemma.
\begin{lemma}\label{lem26}
Let $\Omega$ be a bounded domain in $\mathbb{R}^N$, $N\geq2$ with $\partial \Omega\in C^3$ and suppose that $\tau=0$ in (\ref{16}).  Moreover, we assume that $\phi \in C^5(\mathbb{R},\mathbb{R})$ and $\phi'(v)\leq0$ for all $v\geq0$.  Let $(u(x,t),v(x,t))$ be a positive classical solution of (\ref{16}).  Then there exists $C^*>0$ dependent of $a_2$, $c_2$ and $\Vert v_0 \Vert_{L^\infty}$ such that if $\frac{b_1D_2}{b_2\chi}>C^*$, for any $p>\max\{\frac{N}{2},1\}$,
\begin{equation}\label{220}
\Vert u(\cdot,t) \Vert_{L^p}\leq C(p),\forall t\in(0,T_{\max}),
\end{equation}
where $C=C(p)$ is a positive constant that also depends on $N$ and $\Omega$.
\end{lemma}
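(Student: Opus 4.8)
The plan is to mimic the $L^p$-estimate that underlies Lemma \ref{lem24}, but in the parabolic-elliptic setting we no longer have the a~priori gradient bound (\ref{27}) at our disposal; instead we must exploit the elliptic $v$-equation directly. Concretely, since $\tau=0$, the second equation reads $D_2\Delta v + (a_2-b_2u-c_2v)v=0$, so $-\Delta v = D_2^{-1}(a_2-b_2u-c_2v)v$ pointwise. The idea is to multiply the $u$-equation by $u^{p-1}$, integrate over $\Omega$, integrate by parts, and in the cross-diffusion term
\[
-\chi\int_\Omega \nabla(u^{p-1})\cdot u\phi(v)\nabla v\,dx
= \frac{2(p-1)\chi}{p}\int_\Omega u^{p/2}\phi(v)\nabla(u^{p/2})\cdot\nabla v\,dx
\]
move one derivative off $\nabla v$ by a further integration by parts, thereby converting the bad term into one involving $\Delta v$, which the elliptic equation expresses back in terms of $u$. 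This is exactly the point where the hypotheses $\phi'(v)\le 0$ and $\frac{b_1D_2}{b_2\chi}>C^*$ should enter: the term proportional to $-b_2 u\cdot u^p = -b_2 u^{p+1}$ produced from $-\Delta v$ has the correct (negative) sign and, provided $\chi$ is small relative to $b_1 D_2/b_2$, it can be absorbed against the dissipative term $-C_1\int_\Omega u^{p+1}$ coming from the Lotka-Volterra reaction $f(u,v)=(a_1-b_1u-c_1v)u$. The monotonicity $\phi'(v)\le 0$ ensures that the extra term $\int u^{p/2+1}\phi'(v)|\nabla v|^2$-type contribution (or, after rearranging, the term with $\phi'(v)$) does not have the wrong sign.

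After these manipulations one arrives at a differential inequality of the schematic form
\[
\frac{1}{p}\frac{d}{dt}\int_\Omega u^p\,dx
\le -\,\frac{2D_1(p-1)}{p^2}\int_\Omega|\nabla(u^{p/2})|^2\,dx
+ C\!\left(\frac{b_2\chi}{b_1 D_2}\right)\!\int_\Omega u^{p+1}\,dx
- C_1\int_\Omega u^{p+1}\,dx + C_0\int_\Omega u^p\,dx,
\]
in which the smallness condition $\frac{b_1D_2}{b_2\chi}>C^*$ makes the net coefficient of $\int u^{p+1}$ negative (this fixes $C^*$, depending only on $a_2,c_2,\|v_0\|_{L^\infty}$ through the bound (\ref{24}) on $v$ that controls $\phi(v)$ and the lower-order terms). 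One then discards the $\int u^{p+1}$ term after using it, or keeps it and applies Hölder's inequality $\int u^p \le |\Omega|^{1/(p+1)}(\int u^{p+1})^{p/(p+1)}$ together with Young's inequality to close the loop and obtain $y_p'(t)\le -\tilde C_1 y_p(t)^{(p+1)/p}+\tilde C_0$ with $y_p(t)=\int_\Omega u^p\,dx$; solving this ODE inequality yields (\ref{220}) with a bound uniform in $t\in(0,T_{\max})$.

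The main obstacle, and the step requiring the most care, is controlling the cross-term after integrating by parts: the expression $\int_\Omega u^{p/2}\phi(v)\nabla(u^{p/2})\cdot\nabla v\,dx$ becomes, roughly, $-\frac{1}{p}\int_\Omega u^{p}\,\mathrm{div}(\phi(v)\nabla v)\,dx = -\frac1p\int_\Omega u^p\big(\phi'(v)|\nabla v|^2+\phi(v)\Delta v\big)\,dx$, and the piece with $\phi'(v)|\nabla v|^2$ must be shown to be harmless — here the sign hypothesis $\phi'(v)\le 0$ is essential (it makes $-\int u^p\phi'(v)|\nabla v|^2 \ge 0$, so it can be dropped) — while the piece $-\frac1p\int u^p\phi(v)\Delta v\,dx$ is rewritten via the elliptic equation as $\frac{1}{pD_2}\int u^p\phi(v)(a_2-b_2u-c_2v)v\,dx$, whose dominant term is $-\frac{b_2}{pD_2}\int u^{p+1}\phi(v)v\,dx$, again of good sign, and whose remaining terms are lower order in $u$ and bounded using (\ref{24}). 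Making all constants explicit enough to see that the requisite smallness threshold $C^*$ depends only on $a_2,c_2,\|v_0\|_{L^\infty}$ (and not on $p$), so that the same $C^*$ works for every admissible $p$, is the delicate bookkeeping that the proof must handle.
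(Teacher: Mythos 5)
Your overall strategy is exactly the paper's: test the $u$-equation with $u^{p-1}$, integrate the advection term by parts a second time to produce $\int_\Omega u^p\,\nabla\cdot(\phi(v)\nabla v)$, discard the $\phi'(v)|\nabla v|^2$ piece using $\phi'\le 0$, replace $\Delta v$ by $-(a_2-b_2u-c_2v)v/D_2$ from the elliptic equation, and absorb the resulting $u^{p+1}$ contribution into the logistic dissipation $-b_1\int_\Omega u^{p+1}$ under the hypothesis $\frac{b_1D_2}{b_2\chi}>C^*$ (which is uniform in $p$ because $\frac{p-1}{p}<1$). Your closing ODE argument and the observation about the $p$-independence of $C^*$ are also as in the paper.

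However, your sign bookkeeping is backwards, and as written the argument is internally inconsistent. The correct computation gives
\[
-\chi\int_\Omega \nabla(u^{p-1})\cdot u\,\phi(v)\nabla v\,dx
=-\frac{(p-1)\chi}{p}\int_\Omega \nabla(u^{p})\cdot\phi(v)\nabla v\,dx
=+\frac{(p-1)\chi}{p}\int_\Omega u^{p}\bigl(\phi'(v)|\nabla v|^2+\phi(v)\Delta v\bigr)dx,
\]
with a \emph{plus} sign in front of the divergence (you have an extra minus from writing the cross term as $+\frac{2(p-1)\chi}{p}\int u^{p/2}\phi\,\nabla(u^{p/2})\cdot\nabla v$ rather than with a minus, and a further slip in the prefactor $\frac1p$ versus $\frac12$). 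With the correct sign, the term $\frac{(p-1)\chi}{p}\int u^p\phi'(v)|\nabla v|^2$ is \emph{nonpositive} because $\phi'\le0$, which is why it can be dropped; in your version it would be nonnegative and could not be discarded. More importantly, substituting the elliptic equation yields $+\frac{(p-1)\chi b_2}{pD_2}\int_\Omega u^{p+1}\phi(v)v\,dx$, i.e.\ the $b_2$-term has the \emph{bad} sign — it is precisely the term that must be dominated by $-b_1\int u^{p+1}$, and this is the sole reason the hypothesis $\frac{b_1D_2}{b_2\chi}>C^*$ appears. Your text simultaneously asserts that this term "has the correct (negative) sign" and that it "can be absorbed ... provided $\chi$ is small relative to $b_1D_2/b_2$"; if it really were of good sign the smallness condition would be vacuous. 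Once the signs are corrected, your argument coincides with the paper's proof.
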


\begin{proof}
For any $p>\max\{\frac{N}{2},1\}$, we test the first equation of (\ref{16}) by $u^{p-1}$ and then integrate it over $\Omega$ by parts.  Then we have from $\phi'(v)\leq0$ and $L^\infty-$boundness of $v$ in (\ref{24}) that
\begin{eqnarray}
&&\frac{1}{p}\frac{d}{dt}\int_\Omega u^p  \!\!=\!\!\int_\Omega\!\! u^{p-1}u_t\!\!=\!\!\int_\Omega\!\! u^{p-1}\nabla \cdot (D_1 \nabla u+\chi u\phi(v)\nabla v)\!\!+\!\!\int_\Omega \!\! (a_1\!\!-\!\!b_1u\!\!-\!\!c_1v)u^p   \nonumber \\
\!\!\!\!\! &=&\!\!\!\!\!  -\frac{4D_1(p-1)}{p^2}\int_\Omega \vert \nabla u^\frac{p}{2} \vert^2 dx-\chi\int_\Omega \nabla u^{p-1}  u\phi(v)\nabla v +\int_\Omega (a_1-b_1u-c_1v)u^p dx \nonumber
\end{eqnarray}\begin{eqnarray}\label{221}
\!\!\!\!\! &\leq&\!\!\!\!\!  -\chi\int_\Omega \nabla u^{p-1} u\phi(v)\nabla v+\int_\Omega (a_1-b_1u-c_1v)u^p  \\
\!\!\!\!\! &=&\!\!\!\!\!  \frac{(p-1)\chi}{p}\int_\Omega u^p (\phi'(v)\vert\nabla v\vert ^2+\phi(v)\Delta v)+\int_\Omega (a_1-b_1u-c_1v)u^p  \nonumber \\
\!\!\!\!\! &\leq&\!\!\!\!\! \frac{(p-1)\chi}{p}\int_\Omega u^p \phi(v)\Delta v+\int_\Omega (a_1-b_1u-c_1v)u^p.  \nonumber \\ \nonumber
\end{eqnarray}
Since $\tau=0$, we can substitute $\Delta v=-(a_2-b_2u-c_2v)v/D_2$ into (\ref{221}) and collect
\begin{eqnarray}\label{222}
&&\frac{1}{p}\frac{d}{dt}\int_\Omega u^p dx\leq\frac{(p-1)\chi}{p}\int_\Omega u^p \phi(v)\Delta v+\int_\Omega (a_1-b_1u-c_1v)u^p  \\ \nonumber
\!\!\!\!\! &=&\!\!\!\!\! -\frac{(p-1)\chi}{pD_2}\int_\Omega u^p \phi(v)(a_2-b_2u-c_2v)v+\int_\Omega (a_1-b_1u-c_1v)u^p  \nonumber \\
\!\!\!\!\! &\leq&\!\!\!\!\! -\Big(b_1-\frac{(p-1)\chi \Vert \phi(v) \Vert_{L^\infty} b_2}{pD_2}\Big)\int_\Omega u^{p+1}+ \Big(a_1+\frac{(p-1)\chi c_2}{pD_2}\Vert \phi(v)v^2 \Vert_{L^\infty}\Big)\int_\Omega u^p, \nonumber \\ \nonumber
\end{eqnarray}
For $\frac{b_1D_2}{b_2\chi}$ being large, we have that (\ref{222}) implies
\begin{equation}\label{223}
\frac{d}{dt}\int_\Omega u^p dx \leq -C_1\int_\Omega u^{p+1} dx+C_2\int_\Omega u^p dx,
\end{equation}
where $C_1$ and $C_2$ are positive constants independent of $p$.  Then (\ref{220}) is an immediate consequence of (\ref{223}).
\end{proof}

\begin{corollary}\label{cor2}
Under the conditions in Lemma \ref{lem26}, there exists a positive constant $C$ such that
\begin{equation}\label{224}
\Vert \nabla v(\cdot,t)\Vert_{L^\infty(\Omega)}<C,\forall t\in(0,T_{\max}).
\end{equation}
\end{corollary}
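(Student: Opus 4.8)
The plan is to combine the $L^p$-bounds on $u$ from Lemma \ref{lem26} with standard elliptic regularity applied to the $v$-equation. Since $\tau=0$, for each fixed $t\in(0,T_{\max})$ the function $v(\cdot,t)$ solves the elliptic problem $-D_2\Delta v=(a_2-b_2u-c_2v)v$ in $\Omega$ with $\frac{\partial v}{\partial\textbf{n}}=0$ on $\partial\Omega$. Fix any exponent $p>N$; since $N\geq2$ this satisfies $p>\max\{\frac{N}{2},1\}$, so Lemma \ref{lem26} applies and gives $\Vert u(\cdot,t)\Vert_{L^p(\Omega)}\leq C(p)$ for all $t\in(0,T_{\max})$, with $C(p)$ independent of $t$.

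First I would bound the right-hand side. Writing $h(\cdot,t):=(a_2-b_2u(\cdot,t)-c_2v(\cdot,t))v(\cdot,t)$ and using the uniform bound $0\leq v\leq C$ from (\ref{24}), one has $\Vert h(\cdot,t)\Vert_{L^p(\Omega)}\leq C\big(\Vert u(\cdot,t)\Vert_{L^p(\Omega)}+1\big)\leq C$ uniformly in $t$. The Neumann compatibility condition is automatic: integrating the elliptic equation and using the boundary condition gives $\int_\Omega h(\cdot,t)\,dx=-D_2\int_\Omega\Delta v\,dx=0$, so the problem is solvable and, after subtracting the spatial average $\overline{v}(t):=|\Omega|^{-1}\int_\Omega v(\cdot,t)\,dx$, the $W^{2,p}$ regularity theory for the Neumann Laplacian (see e.g. \cite{LSU}) yields $\Vert v(\cdot,t)-\overline{v}(t)\Vert_{W^{2,p}(\Omega)}\leq C\Vert h(\cdot,t)\Vert_{L^p(\Omega)}\leq C$. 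Since $|\overline{v}(t)|\leq\Vert v(\cdot,t)\Vert_{L^\infty(\Omega)}\leq C$, this gives $\Vert v(\cdot,t)\Vert_{W^{2,p}(\Omega)}\leq C$ for all $t\in(0,T_{\max})$.

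Then I would invoke the Sobolev embedding $W^{2,p}(\Omega)\hookrightarrow C^{1,1-N/p}(\bar\Omega)$, valid for $p>N$ and $\partial\Omega\in C^3$, to conclude $\Vert\nabla v(\cdot,t)\Vert_{L^\infty(\Omega)}\leq C\Vert v(\cdot,t)\Vert_{W^{2,p}(\Omega)}\leq C$ uniformly in $t$, which is exactly (\ref{224}). Equivalently, one may rewrite the elliptic equation as $v=(-\Delta+1)^{-1}\big(v+h/D_2\big)$ and use the smoothing estimate $\Vert\nabla(-\Delta+1)^{-1}w\Vert_{W^{1,p}(\Omega)}\leq C\Vert w\Vert_{L^p(\Omega)}$ together with $W^{1,p}\hookrightarrow L^\infty$ for $p>N$, in the spirit of (\ref{26}).

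There is no essential obstacle here: the only points requiring care are that the Neumann compatibility condition holds — which it does because $v$ genuinely solves the elliptic equation — and that the constant in the elliptic estimate is $t$-independent, which follows once the mean of $v$ is controlled by its $L^\infty$-bound. The reason the argument closes so cheaply, in contrast to the Moser--Alikakos iteration needed for the one-dimensional parabolic case, is that Lemma \ref{lem26} already supplies $L^p$-bounds on $u$ at \emph{every} finite $p$; this lets us pick $p>N$ and afford the two-derivative gain of elliptic regularity before embedding into $C^1$, whereas a mere $L^2$-bound on $u$ would be insufficient once $N\geq4$.
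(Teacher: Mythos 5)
Your proof is correct, and it reaches the bound by the same two-step mechanism the paper intends: Lemma \ref{lem26} gives $\Vert u(\cdot,t)\Vert_{L^p}\leq C$ for every finite $p$ (in particular some $p>N$), and then a one-derivative regularity gain on the $v$-equation followed by the embedding into $L^\infty$ closes the estimate. The only difference is in how that gain is realized. The paper's proof is a one-line appeal to the smoothing inequality (\ref{26}), which was derived from the Duhamel representation of the \emph{parabolic} $v$-equation in (\ref{21}); since here $\tau=0$ and the $v$-equation is elliptic, the honest reading of that citation is the resolvent estimate for $-\Delta+1$, which is exactly the "equivalently" remark at the end of your second paragraph. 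Your main argument instead runs through stationary $W^{2,p}$ elliptic regularity for the Neumann problem plus $W^{2,p}\hookrightarrow C^{1,1-N/p}$, and you correctly handle the two points that make the constant $t$-independent: the compatibility condition $\int_\Omega h\,dx=0$ and the control of the spatial mean of $v$ by its $L^\infty$-bound from (\ref{24}). So your route is, if anything, the more carefully adapted one for the $\tau=0$ setting; both arguments buy the same conclusion at the same cost, namely the availability of $L^p$-bounds on $u$ for $p>N$ from Lemma \ref{lem26}.
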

\begin{proof}
(\ref{224}) is a quick implication of Lemma \ref{lem26} and (\ref{26}).
\end{proof}

Now we prove the $L^\infty$-bound of $u$ and present our main result on the existence of global solutions for the parabolic-elliptic model.
\begin{theorem}\label{thm27}
Suppose that $\tau=0$ in (\ref{16}) and the initial data $(u_0,v_0)\in C^(\bar \Omega)\times W^{1,p}$, $p>N$, $u$, $v\geq0$, $\not \equiv0$.  Under the assumptions in Lemma \ref{lem26}, (\ref{16}) admits a unique globally bounded classical solution $(u(x,t),v(x,t))$ for all $(x,t)\in\Omega \times (0,\infty)$; both $u$ and $v$ are nonnegative on $\Omega$ for all $t\in(0,\infty)$.
\end{theorem}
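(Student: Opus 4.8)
The plan is to assemble the global existence result from the pieces already in place exactly as was done in the one-dimensional case, replacing Lemma \ref{lem24} by Lemma \ref{lem26} and Corollary \ref{cor1} by Corollary \ref{cor2}. First I would invoke the parabolic-elliptic analogue of Theorem \ref{thm21}: for $\tau=0$ the system is still normally parabolic and triangular, so Amann's theory gives a unique local classical solution $(u,v)$ on $\bar\Omega\times[0,T_{\max})$ together with the extensibility criterion that $T_{\max}=\infty$ provided $\sup_{s\in(0,t)}\Vert(u,v)(\cdot,s)\Vert_{L^\infty}$ stays finite on $(0,T_{\max})$; positivity of $u$ and $v$, and the bound $v\le a_2/c_2$, come from the Strong Maximum Principle and the Comparison Principle just as in part \emph{(iii)} of Theorem \ref{thm21} (for the elliptic $v$-equation one uses that $v$ solves $-D_2\Delta v=(a_2-b_2u-c_2v)v$ with Neumann data, so the elliptic maximum principle applies). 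So the whole matter reduces to an a priori $L^\infty$-bound on $u$.

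Next I would run the Moser--Alikakos iteration. By Lemma \ref{lem26}, under the smallness condition $b_1D_2/(b_2\chi)>C^*$ we already have $\Vert u(\cdot,t)\Vert_{L^p}\le C(p)$ for every $p>\max\{N/2,1\}$ and all $t\in(0,T_{\max})$, and by Corollary \ref{cor2} we have $\Vert\nabla v(\cdot,t)\Vert_{L^\infty}\le C$. Testing the $u$-equation with $u^{p-1}$, integrating by parts, and bounding the cross term $\chi\int_\Omega u^{p/2}\phi(v)\nabla u^{p/2}\cdot\nabla v$ by Young's inequality (absorbing a fraction of $\int_\Omega|\nabla u^{p/2}|^2$) yields, exactly as in (\ref{214}), a differential inequality of the form
\[
\frac{d}{dt}\int_\Omega u^p\le -C_\kappa\, p\int_\Omega u^p + C\,p^{\,\theta}\Big(\int_\Omega u^{p/2}\Big)^2
\]
after applying the Gagliardo--Ladyzhenskaya--Nirenberg inequality in $N$ dimensions to control $\Vert u^{p/2}\Vert_{L^2}^2$ by $\epsilon\Vert\nabla u^{p/2}\Vert_{L^2}^2 + K(\epsilon)\Vert u^{p/2}\Vert_{L^q}^2$ for a suitable low exponent $q$ depending on $N$. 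Integrating in time and setting $M(p)=\max\{\Vert u_0\Vert_{L^\infty},\sup_{(0,T)}\Vert u(\cdot,t)\Vert_{L^p}\}$ gives a recursion $M(2^i)\le (C\,2^{\,\sigma i})^{2^{-i}}M(2^{i-1})$; since $\sum_i i\,2^{-i}<\infty$ the infinite product converges, so $M(2^i)\le C\,M(2^{i_0})$ uniformly in $i$, and $M(2^{i_0})$ is finite by the $L^{p_0}$-bound from Lemma \ref{lem26}. Letting $i\to\infty$ produces $\Vert u(\cdot,t)\Vert_{L^\infty}\le C$ for all $t\in(0,T_{\max})$, hence $T_{\max}=\infty$. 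Finally, standard elliptic $L^p$ and Schauder estimates for $v$ followed by parabolic $L^p$ and Schauder estimates for $u$ upgrade $(u,v)$ to a globally bounded classical solution on $\Omega\times(0,\infty)$.

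The one genuinely $N$-dimensional subtlety — and the place I would be most careful — is the interplay between the exponent restriction $p>\max\{N/2,1\}$ needed in Lemma \ref{lem26} (so that $\int_\Omega u^{p+1}$ dominates after substituting $\Delta v=-(a_2-b_2u-c_2v)v/D_2$) and the exponents required by the GLN inequality in the iteration: one must check that the base case of the iteration can be started at some $p_0>N/2$ already covered by Lemma \ref{lem26}, and that the GLN interpolation constant grows at most polynomially in $p$ so the product $\prod_j(C\,2^{\sigma j})^{2^{-j}}$ still converges. The sign condition $\phi'(v)\le0$ is what makes the term $\frac{(p-1)\chi}{p}\int_\Omega u^p\phi'(v)|\nabla v|^2$ have the favorable sign in (\ref{221}), and the largeness of $b_1D_2/(b_2\chi)$ is what lets the dissipative $-C_1\int u^{p+1}$ beat the bad $u^p$ term uniformly in $p$; both are already packaged into Lemma \ref{lem26}, so no new obstacle arises there. $\hfill\square$
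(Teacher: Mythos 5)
Your proposal follows essentially the same route as the paper, which likewise obtains local existence \`a la Theorem \ref{thm21}, feeds the $L^p$-bounds of Lemma \ref{lem26} and the gradient bound of Corollary \ref{cor2} into the same Moser--Alikakos iteration used for (\ref{219}), and concludes nonnegativity from the maximum principle. The paper's own proof is only a two-line sketch deferring to the one-dimensional argument, so your explicit attention to the $N$-dimensional Gagliardo--Ladyzhenskaya--Nirenberg exponents and the polynomial growth of the constants in $p$ is a welcome (and compatible) elaboration rather than a departure.
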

\begin{proof}
Similar as the proof of Theorem \ref{thm21}, we can show the existence of unique classical local solutions $(u(x,t),v(x,t))$ for $(x,t)\in \Omega \times (0,T_{\max})$.   Moreover, by the same Moser-Alikakos iteration estimate that leads to the proof of (\ref{219}), we can prove that $\Vert u(\cdot,t) \Vert_{L^\infty(\Omega)}$ for all $t\in(0,T_{\max})$, therefore $T_{\max}=\infty$ and the classical solutions are global.  Finally, the nonnegativity of the classical solutions follows from the Strong Maximum Principle and Hopf's boundary point lemma.
\end{proof}

We should mention that the largeness assumption on $b_1D_2/b_2\chi$ is made out of the mathematical analysis and both the self-competition rate $b_1$ and diffusion rate $D_2$ have the stabilizing effect to exclude finite time blow-ups.  There is clearly global existence in (\ref{16}) for $\chi=0$ for bounded domain $\Omega$ in arbitrary dimensions, then we expect this is still true for $\chi$ being small from the standard perturbation theory.  On the other hand, it is also worth pointing out that, the restriction on $b_1D_1/b_2\chi$ is necessary in this sense that the largeness of $b_1$ may be insufficient to guarantee the existence of global solutions for the general multi-dimensional parabolic-parabolic system (\ref{14}).

\section{Existence and stability of nonconstant positive steady states}\label{section3}

From the viewpoint of mathematical modeling, it is interesting and important to understand whether or not two competing species can form spatial segregation eventually.  For this purpose, we focus on the stationary system of (\ref{16}) and study the existence of nonconstant positive steady states that exhibit striking patterns such as boundary or transition layers.  In particular, we consider the species competition model over a one-dimensional habitat and we study the stationary solutions to the following strongly-coupled system in this section,
\begin{equation}\label{31}
\left\{
\begin{array}{ll}
u_t=(D_1 u'+\chi u\phi(v) v')'+(a_1-b_1u-c_1v)u ,& x \in (0,L),t>0,\\
v_t=D_2  v''+(a_2-b_2u-c_2v)v ,&x \in (0,L), t>0,\\
u(x,0)=u_0(x),~v(x,0)=v_0(x),&x\in(0,L),\\
u'(x)=v'(x)=0, & x=0,L,t>0.
\end{array}
\right.
\end{equation}
We remind the reader that classical solutions of (\ref{31}) exist globally and are uniformly bounded for all $t\in(0,\infty)$.

Unlike random movements, directed dispersals have the effect of destabilizing the spatially homogeneous solutions.  Then spatially inhomogeneous solutions may arise through bifurcation as the homogeneous one becomes unstable.  To study the regime under which spatial patterns arise in (\ref{31}), we first implement the standard linearized stability analysis at $(\bar{u},\bar{v})$.  Let $(u,v)=(\bar u,\bar v)+(U,V)$, where $U$ and $V$ are small perturbations from ($\bar u,\bar v$), then we arrive at the following system of $(U,V)$
\begin{equation*}
\left\{
\begin{array}{ll}
U_t\approx  (D_1   U' +\chi \bar u \phi(\bar v)V')'-b_1\bar{u}U-c_1\bar u V ,  & x \in(0,L),~t>0,\\
V_t \approx D_2  V'' -b_2 \bar vU-c_2\bar vV, & x \in(0,L),~t>0,\\
U'(x)=V'(x)=0, & x=0,L,t>0.
\end{array}
\right.
\end{equation*}
According to the standard linearized stability analysis, we see that the stability of $(\bar{u},\bar{v})$ can be determined by the eigenvalues of the following matrix,
 \begin{equation}\label{32}
\begin{pmatrix}
 -D_1 \Lambda -b_1\bar{u} & -\chi\bar{u} \phi(\bar{v})\Lambda-c_1\bar u \\
 -b_2 \bar{v}              & -D_2 \Lambda-c_2\bar v
  \end{pmatrix} ,
   \end{equation}
where $\Lambda=\big(\frac{k\pi}{L}\big)^2>0$, $k=1,2,...$, are the $k$-th eigenvalues of $-\frac{d^2}{dx^2}$ on $(0,L)$ under the Neumann boundary conditions.  We have the following result on the linearized instability of $(\bar{u},\bar{v})$ to (\ref{31}).
\begin{proposition}\label{prop1}
The constant solution $(\bar{u},\bar{v})$ of (\ref{31}) is unstable if
\begin{equation}\label{33}
\chi>\chi_0=\min_{k\in \mathbb N^+} \frac{\big( D_1(\frac{k\pi}{L})^2+b_1\bar{u} \big)\big(D_2(\frac{k\pi}{L})^2+c_2\bar{v} \big)-b_2c_1\bar{u}\bar{v} }{b_2(\frac{k\pi}{L})^2\phi(\bar{v})\bar{u}\bar{v}}.
\end{equation}
\end{proposition}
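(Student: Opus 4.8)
The plan is to run the classical linearized stability analysis and reduce the instability question to a mode-by-mode spectral condition. Writing $(u,v)=(\bar u,\bar v)+(U,V)$ and linearizing (as in the displayed system just preceding (\ref{32})), the perturbation $(U,V)$ evolves under a linear parabolic operator with Neumann boundary conditions. Since $\{\cos(k\pi x/L)\}_{k\geq 0}$ is a complete orthogonal family of eigenfunctions of $-d^2/dx^2$ on $(0,L)$ under Neumann conditions, I would expand $(U,V)$ in this basis; on the $k$-th mode the linearized problem becomes the $2\times2$ ODE system whose coefficient matrix is exactly (\ref{32}) with $\Lambda=(k\pi/L)^2$. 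By the principle of linearized instability for the (triangular, normally parabolic) system, $(\bar u,\bar v)$ is unstable as soon as this matrix, call it $M_k$, has an eigenvalue with strictly positive real part for some $k\geq1$.

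Next I would translate that spectral condition into an inequality on $\chi$. A real $2\times2$ matrix has an eigenvalue in the open right half-plane if and only if $\operatorname{tr}M_k>0$ or $\det M_k<0$. Here $\operatorname{tr}M_k=-(D_1+D_2)\Lambda-b_1\bar u-c_2\bar v<0$, because all the parameters and $\bar u,\bar v$ are positive under (\ref{12}), so instability is equivalent to $\det M_k<0$ for some $k$. A direct computation gives
\[
\det M_k=\bigl(D_1\Lambda+b_1\bar u\bigr)\bigl(D_2\Lambda+c_2\bar v\bigr)-b_2c_1\bar u\bar v-\chi\, b_2\phi(\bar v)\bar u\bar v\,\Lambda ,
\]
so $\det M_k<0$ is equivalent to
\[
\chi>\frac{\bigl(D_1\Lambda+b_1\bar u\bigr)\bigl(D_2\Lambda+c_2\bar v\bigr)-b_2c_1\bar u\bar v}{b_2\phi(\bar v)\bar u\bar v\,\Lambda},\qquad \Lambda=\Bigl(\tfrac{k\pi}{L}\Bigr)^2 .
\]
Such a $k$ exists precisely when $\chi$ exceeds the infimum over $k\in\mathbb N^+$ of the right-hand side, which is $\chi_0$ as defined in (\ref{33}); the infimum is in fact attained, since the ratio behaves like a positive constant times $\Lambda$ as $k\to\infty$ (numerator $\sim\Lambda^2$, denominator $\sim\Lambda$), so only finitely many indices compete below any fixed level. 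This closes the argument.

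The one genuinely non-routine point is the passage from ``$M_k$ has an unstable eigenvalue'' to actual nonlinear instability of $(\bar u,\bar v)$ as a steady state of (\ref{31}): this requires an instability theorem for the quasilinear triangular system, which follows from sectoriality of the linearization (its principal part has positive eigenvalues, in the Amann framework used in Section \ref{section2}) together with the standard spectral-gap / unstable-manifold estimate. Everything else — the cosine mode decomposition, the sign of the trace, the determinant algebra, and the matching with (\ref{33}) — is elementary. I would also note in passing that $\phi(\bar v)>0$ is tacitly needed for (\ref{33}) to be meaningful, and that in the strong competition regime the numerator can be negative for small $k$, in which case $\chi_0\le 0$ and $(\bar u,\bar v)$ is unstable for every $\chi>0$, a case the statement subsumes automatically.
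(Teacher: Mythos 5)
Your argument is correct and follows essentially the same route as the paper: linearize at $(\bar u,\bar v)$, expand in the Neumann cosine modes, and observe that since the trace of the $k$-th mode matrix is negative, an unstable eigenvalue exists precisely when its determinant is negative, which rearranges to $\chi>\chi_k$ and hence to $\chi>\chi_0=\min_k\chi_k$ (the paper phrases this as $p(0)=\mathrm{Det}<0$ for the characteristic polynomial, which is the same criterion). Your additional remarks — that the minimum is attained, that $\phi(\bar v)>0$ is tacitly used, and that passing from spectral to genuine nonlinear instability needs the Amann/sectoriality framework — go slightly beyond the paper, which contents itself with linearized instability, but they do not change the substance.
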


\begin{proof}For each $k\in \mathbb N^+$, the stability matrix (\ref{32}) becomes
 \begin{equation}\label{34}
 H_k=
\begin{pmatrix}
 -D_1 \big(\frac{k\pi}{L}\big)^2 -b_1\bar{u} & -\chi\bar{u}\phi(\bar{v})\big(\frac{k\pi}{L}\big)^2-c_1\bar u \\
 -b_2 \bar{v}              & -D_2 \big(\frac{k\pi}{L}\big)^2-c_2\bar v
  \end{pmatrix} ,
   \end{equation}
Then $(\bar{u},\bar{v})$ is unstable if $H_k$ has an eigenvalue with positive real part for some $k \in \mathbb N^+$.  It is easy to see that the characteristic polynomial of (\ref{34}) takes the form
\[p(\lambda)=\lambda^2+\text{Tr} \lambda+\text{Det},\]
where \[\text{Tr}=\big(D_1+D_2\big)\Big(\frac{k\pi}{L}\Big)^2+b_1\bar u+c_2\bar v>0,\text{~and~}\]
\[\text{Det}=\big(D_1\big(\frac{k\pi}{L}\big)^2+b_2\bar{u}\big)\big(D_2\big(\frac{k\pi}{L}\big)^2 +c_2\bar v\big)-\big(\chi\bar{u}\phi(\bar{v})\big(\frac{k\pi}{L}\big)^2+c_1\bar u\big)b_2 \bar{v},\]
then $p(\lambda)$ has a positive root if and only if $p(0)=\text{Det}<0$.  Hence (\ref{33}) readily follows and this finishes the proof of the proposition.
\end{proof}

$(\bar u,\bar v)$ changes its stability as $\chi$ cross $\chi_0$.  We note that in the strong competition case $\frac{b_1}{b_2}<\frac{a_1}{a_2}<\frac{c_1}{c_2}$, $\chi_0$ in (\ref{33}) is negative if $D_1$ and $D_2$ are sufficiently small.  It is well known that $(\bar u,\bar v)$ is unstable if $\chi=0$ in (\ref{31}) in this case.  Moreover, the semi-steady states $(\frac{a_1}{b_1},0)$ and $(0,\frac{a_2}{c_2})$ are global attractors.   We also want to remark that Proposition \ref{prop1} holds for multi-dimensional domain $\Omega\subset \mathbb{R}^N$ with $N\geq2$, with $\big(\frac{k\pi}{L}\big)^2$ replaced by the $k$-th eigenvalue of $-\Delta$ under the Neumann boundary condition.

\subsection{Positive solutions through bifurcation}

The linearized instability of $(\bar u,\bar v)$ in (\ref{31}) is insufficient to guarantee the existence of spatially inhomogeneous steady states.  As we have shown above, the advection term $\chi u \phi(v) \nabla v$ has the effect of destabilizing $(\bar u,\bar v)$, which becomes unstable if $\chi$ surpasses $\chi_0$.  Then we are concerned if a stable spatially inhomogeneous steady states of (\ref{31}) may emerge through bifurcations as $\chi$ increases.  Clearly, the emergence of spatially inhomogeneous solutions is due to the effect of large advection rate $\chi$ and we refer this as advection-induced patterns in the sense of Turing's instability.

In this section, we carry out bifurcation analysis to seek non-constant positive solutions to the following stationary reaction-advection-diffusion system,
\begin{equation}\label{35}
\left\{
\begin{array}{ll}
(D_1 u'+\chi u\phi(v) v')'+(a_1-b_1u-c_1v)u=0,& x \in (0,L),\\
D_2  v''+(a_2-b_2u-c_2v)v=0,&x \in (0,L), \\
u'(x)=v'(x)=0, & x=0,L.
\end{array}
\right.
\end{equation}
In order to apply the bifurcation theory of Crandall and Rabinowitz \cite{CR}, we first introduce the Hilbert space
\begin{equation}\label{36}
\mathcal{X}=\{w \in H^2(0,L) ~\vert w'(0)=w'(L)=0\}.
\end{equation}
Then by taking $\chi$ as the bifurcation parameter, we rewrite (\ref{35}) in the abstract form
\[\mathcal{F}(u,v,\chi)=0,~(u,v,\chi) \in \mathcal{X}  \times \mathcal{X} \times \mathbb{R},\]
where
\begin{equation}\label{37}
\mathcal{F}(u,v,\chi) =\left(
 \begin{array}{c}
(D_1 u'+\chi u\phi(v) v')'+(a_1-b_1u-c_1v)u\\
D_2  v''+(a_2-b_2u-c_2v)v
 \end{array}
 \right).
 \end{equation}

It is easy to see that $\mathcal{F}(\bar{u},\bar{v},\chi)=0$ for any $\chi \in \mathbb{R}$ and $\mathcal{F}: \mathcal{X}  \times \mathbb{R}  \times
\mathbb{R}  \rightarrow \mathcal{Y} \times \mathcal{Y}$ is analytic for $\mathcal{Y}=L^2(0,L)$;  moreover, we can have from straightforward calculations
that, for any fixed $(u_0,v_0) \in \mathcal{X} \times \mathcal{X}$, the Fr\'echet derivative of $\mathcal{F}$ is given by\\
\vspace{-2mm}
\hspace{1cm}$D_{(u,v)}\mathcal{F}(u_0,v_0,\chi)(u,v)$
\begin{equation}\label{38}
=\left(
\begin{array}{c}
D_1u''+\chi(\phi(v_0)uv'_0+\phi(v_0)u_0v'+\phi'(v_0)u_0v'_0v)'+D\mathcal{F}_2\\
D_2 v''+(a_2-b_2u_0-2c_2v_0)v-b_2uv_0
\end{array}
\right),
\end{equation}
where $D\mathcal{F}_2=(a_1-2b_1u_0-c_1v_0)u-c_1u_0v$.

Denoting $\textbf{u}=(u,v)^\text{T}$, we can write (\ref{38}) as
\[D_{(u,v)}\mathcal{F}(u_0,v_0,\chi)(u,v)=\textbf{A}_0(\textbf{u}) \textbf{u}''+\textbf{A}_1(\textbf{u}) \textbf{u}'+\textbf{F}_0(x,\textbf{u}),\]
where
$$\textbf{A}_0(\textbf{u})=\begin{pmatrix}
D_1 & \chi u_0\phi(v_0)  \\
0 &  D_2
\end{pmatrix},$$
$$\textbf{A}_1(\textbf{u})=\begin{pmatrix}
\chi v'_0\phi(v_0) & \chi\big((u_0 \phi(v_0))'+u_0 v'_0\phi'(v_0)\big)  \\
0 &  0
\end{pmatrix}$$
and
$\textbf{F}_0=\begin{pmatrix}
\chi (v'_0\phi(v_0))'u+\chi (u_0 v'_0\phi'(v_0))'v+(a_1-2b_1u_0-c_1v_0)u-c_1u_0v \\
(a_2-b_2u_0-2c_2v_0)v-b_2uv_0
\end{pmatrix}$, then we see that operator (\ref{38}) is elliptic; moreover it is strongly elliptic and satisfies the Agmon's condition according to Remark 2.5 of case 2 with $N=1$ in Shi and Wang \cite{SW}.  Hence by Theorem 3.3 and Remark 3.4 of \cite{SW}, $D_{(u,v)}\mathcal{F}(u_0,v_0,\chi)$ is a Fredholm operator with zero index.

For bifurcations to occur at $(\bar u,\bar v,\chi)$, we need the Implicit Function Theorem to fail on $\mathcal{F}$, therefore $\mathcal{N}\big(D_{(u,v)}\mathcal{F}(\bar{u},\bar{v},\chi)\big) \neq \{0\}$ and this implies that there exists a nontrivial solution $(u,v)$ to the following problem
\begin{equation}\label{39}
\left\{
\begin{array}{ll}
D_1 u''+\chi\bar{u}\phi(\bar{v})v''-b_1\bar{u}u-c_1\bar{u}v=0,&x\in(0,L),\\
D_2  v''-b_2\bar{v}u-c_2\bar{v}v=0,&x\in(0,L),\\
u'(x)=v'(x)=0,&x=0,L.
\end{array}
\right.
\end{equation}
We write $u$ and $v$ into their eigen-expansions
\[u(x)=\sum_{k=0}^\infty t_k \cos \frac{k\pi x}{L},~v(x)=\sum_{k=0}^\infty s_k \cos \frac{k\pi x}{L},\]
and substitute them into (\ref{39}) to obtain that
\begin{equation}\label{310}
\begin{pmatrix}
-D_1 \big(\frac{k \pi}{L}\big)^2-b_1\bar{u} & -\chi \big(\frac{k \pi}{L}\big)^2 \bar{u}\phi(\bar{v})-c_1\bar{u}   \\
-b_2\bar{v} & -D_2\big(\frac{k \pi}{L}\big)^2-c_2\bar{v}
\end{pmatrix}
\begin{pmatrix}
t_k\\
~~\\
s_k
\end{pmatrix}=\begin{pmatrix}
0\\
~~\\
0
\end{pmatrix}.
\end{equation}
We want to establish the condition such that (\ref{310}) admits nonzero solutions $(t_k,s_k)$ for some $k\in N$.  $k=0$ can be easily ruled out, otherwise we must have from the fact $B\neq C$ that $t_1=s_1=0$.  For $k\in \mathbb N^+$, (\ref{310}) has nonzero solutions if and only if its coefficient matrix is singular, which implies that
\begin{equation}\label{311}
\chi=\chi_k=\frac{\big( D_1(\frac{k\pi}{L})^2+b_1\bar{u} \big)\big(D_2(\frac{k\pi}{L})^2+c_2\bar{v} \big)-b_2c_1\bar{u}\bar{v} }{b_2(\frac{k\pi}{L})^2\phi(\bar{v})\bar{u}\bar{v}}.
\end{equation}
We readily see that $\chi_k>0$ if and only if
\begin{equation}\label{312}
\Big( D_1\big(\frac{k\pi}{L}\big)^2+b_1\bar{u} \Big)\Big(D_2\big(\frac{k\pi}{L}\big)^2+c_2\bar{v} \Big)>b_2c_1\bar{u}\bar{v},k\in \mathbb N^+,
\end{equation}
and we shall assume (\ref{312}) from now on.  Moreover, we have that the null space $\mathcal{N}(D_{(u,v)}\mathcal{F}(\bar{u},\bar{v},\chi_k))$ is one-dimensional and it has a span
\[\mathcal{N}(D_{(u,v)}\mathcal{F}(\bar{u},\bar{v},\chi_k))=\text{span}\left\{ (\bar{u}_k, \bar{v}_k) \right\},\]
where
\begin{equation}\label{313}
(\bar{u}_k,\bar{v}_k)=(Q_k,1)\cos\frac{k\pi x}{L},\text{~with~}Q_k=-\frac{D_2(\frac{k\pi}{L})^2+c_2\bar{v}}{b_2\bar{v}},k\in \mathbb N^+.
\end{equation}

Having the potential bifurcation values $\chi_k$ in (\ref{311}), we now verify that local bifurcation does occur at $(\bar{u},\bar{v},\chi_k)$ for each $k\in \mathbb N^+$, which establishes nonconstant positive solutions to (\ref{31}) in the following theorem.
\begin{theorem}\label{thm31}
Suppose that $\phi\in C^2(\mathbb{R} ,\mathbb{R})$ and $\phi(v)>0$ for all $v>0$.  Assume that the conditions (\ref{12}), (\ref{312}) hold and
\begin{equation}\label{314}
(b_1c_2-b_2c_1)\bar u \bar v\neq k^2j^2D_1D_2\big(\frac{\pi}{L}\big)^4,k\neq j,
\end{equation}
for all positive different integers $k,j\in \mathbb N^+$, where $(\bar u,\bar v)$ is the positive equilibrium of (\ref{35}).  Then for each $k\in \mathbb N^+$, there exists a constant $\delta>0$ and continuous functions
$s\in(-\delta, \delta):\rightarrow (u_k(s,x),v_k(s,x),\chi_k(s)) \in \mathcal{X}\times \mathcal{X} \times \mathbb{R}^+$ such that,
\begin{equation}\label{315}
\chi_k(0)=\chi_k+O(s),~(u_k(s,x),v_k(s,x))=(\bar{u},\bar{v})+s(Q_k,1)\cos \frac{k\pi x}{L} +o(s),
\end{equation}
and $(u_k(s,x),v_k(s,x))-(\bar{u},\bar{v})-s(Q_k,1)\cos \frac{k\pi x}{L} \in \mathcal{Z}$, where
\begin{equation}\label{316}
\mathcal{Z}=\big\{(u,v)\in \mathcal{X} \times \mathcal{X}~ \big \vert \int_0^L u \bar u_k+v\bar v_k dx=0\big\}.
\end{equation}
with $(\bar u_k,\bar v_k)$ and $Q_k$ defined in (\ref{313}); moreover, $(u_k(s,x),v_k(s,x))$ solves system (\ref{35}) and all nontrivial solutions of (\ref{35}) near the bifurcation point ($\bar{u},\bar{v}, \chi_k)$ must stay on the curve $\Gamma_k(s)=(u_k(s), v_k(s), \chi_k(s))$, $s\in(-\delta,\delta)$.
\end{theorem}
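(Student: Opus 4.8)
The plan is to apply the Crandall--Rabinowitz local bifurcation theorem (Theorem 1.7 in \cite{CR}) to the abstract equation $\mathcal{F}(u,v,\chi)=0$ at the candidate point $(\bar u,\bar v,\chi_k)$, where $\mathcal F$ is defined in (\ref{37}) as a map from $\mathcal X\times\mathcal X\times\mathbb R$ to $\mathcal Y\times\mathcal Y$. The preceding discussion has already assembled most of the hypotheses: $\mathcal F$ is analytic, $\mathcal F(\bar u,\bar v,\chi)\equiv 0$, and by the Shi--Wang ellipticity/Agmon discussion $\mathcal L_k:=D_{(u,v)}\mathcal F(\bar u,\bar v,\chi_k)$ is a Fredholm operator of index zero. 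So the remaining work is to verify the three structural conditions of the theorem: (a) $\dim\mathcal N(\mathcal L_k)=1$, (b) $\operatorname{codim}\mathcal R(\mathcal L_k)=1$, and (c) the transversality condition $\partial_\chi D_{(u,v)}\mathcal F(\bar u,\bar v,\chi_k)\big[(\bar u_k,\bar v_k)\big]\notin\mathcal R(\mathcal L_k)$.

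First I would pin down the kernel. Expanding a null element in the Neumann cosine basis reduces $\mathcal L_k(u,v)=0$ to the family of $2\times2$ algebraic systems (\ref{310}); the $k=0$ mode is excluded because $b_1\bar u\ne$ the relevant entry (the text's "$B\ne C$" point), and for $k\ge 1$ the coefficient matrix is singular precisely at $\chi=\chi_k$ given by (\ref{311}). Here condition (\ref{314}) is exactly what guarantees that $\chi_j\ne\chi_k$ for $j\ne k$, so that the $k$-th mode is the \emph{only} mode whose matrix degenerates at $\chi=\chi_k$; hence $\mathcal N(\mathcal L_k)=\operatorname{span}\{(\bar u_k,\bar v_k)\}$ with $(\bar u_k,\bar v_k)=(Q_k,1)\cos\frac{k\pi x}{L}$ as in (\ref{313}), and (a) holds. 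Since $\mathcal L_k$ has index zero, (b) follows immediately: $\operatorname{codim}\mathcal R(\mathcal L_k)=\dim\mathcal N(\mathcal L_k)=1$. I would also identify $\mathcal R(\mathcal L_k)$ concretely as those $(f,g)\in\mathcal Y\times\mathcal Y$ whose $k$-th cosine Fourier coefficient vector lies in the range of the (singular) $2\times2$ matrix, i.e. is orthogonal to the left null vector $\ell_k$ of that matrix; this description is what makes the transversality check computable.

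The transversality condition (c) is the technical heart. Differentiating (\ref{38}) in $\chi$ at $(\bar u,\bar v)$ and evaluating on $(\bar u_k,\bar v_k)$, the first component produces terms of the form $\bar u\phi(\bar v)\,\bar v_k''=-\bar u\phi(\bar v)(\tfrac{k\pi}{L})^2\cos\frac{k\pi x}{L}$ (the advection cross-term; all $\bar v'$-type terms drop because $\bar v$ is constant) and the second component is $0$. So $\partial_\chi\mathcal L_k[(\bar u_k,\bar v_k)]=\big(-\bar u\phi(\bar v)(\tfrac{k\pi}{L})^2,\,0\big)^{\mathrm T}\cos\frac{k\pi x}{L}$, a pure $k$-th mode, and I must check that its coefficient vector is \emph{not} orthogonal to $\ell_k$; because $\phi(\bar v)>0$, $(\tfrac{k\pi}{L})^2>0$, $\bar u>0$, and the relevant component of $\ell_k$ is nonzero (traceable to $b_2\bar v>0$), this pairing is nonzero, so $\partial_\chi\mathcal L_k[(\bar u_k,\bar v_k)]\notin\mathcal R(\mathcal L_k)$. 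I expect this to be the main obstacle only in the sense of bookkeeping — it requires care in writing the derivative of the nonlinear advection term and in tracking which Fourier mode survives, but no deep idea. With (a)--(c) in hand, Crandall--Rabinowitz yields $\delta>0$, the $C^1$ (indeed analytic) branch $s\mapsto(u_k(s,\cdot),v_k(s,\cdot),\chi_k(s))$ with the expansion (\ref{315}), the decomposition statement that the branch minus its leading part lies in the complement $\mathcal Z$ of the kernel defined in (\ref{316}), and the local uniqueness of nonconstant solutions near $(\bar u,\bar v,\chi_k)$. Finally I would note $\chi_k>0$ under assumption (\ref{312}), so the branch lives in $\mathcal X\times\mathcal X\times\mathbb R^+$ as claimed, and invoke the strong maximum principle / a smallness-in-$s$ argument to conclude $(u_k(s,x),v_k(s,x))$ is componentwise positive for $|s|$ small, completing the proof of Theorem \ref{thm31}.
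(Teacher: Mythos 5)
Your proposal is correct and follows essentially the same route as the paper: both reduce the kernel and range computations to the $2\times 2$ cosine-mode systems (\ref{310}), use (\ref{314}) to ensure simplicity of the eigenvalue $\chi_k$, and verify transversality by observing that $\partial_\chi D_{(u,v)}\mathcal{F}(\bar u,\bar v,\chi_k)(\bar u_k,\bar v_k)$ is a pure $k$-th mode whose coefficient vector $\bigl(-\bar u\phi(\bar v)(\tfrac{k\pi}{L})^2,0\bigr)$ fails to lie in the range of the singular matrix (the paper phrases this as the contradiction in (\ref{319}); your left-null-vector pairing is the same computation). No gaps.
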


\begin{proof}
To apply the local theory of Crandall and Rabinowitz in \cite{CR}, we have checked all but the following transversality condition,
\begin{equation}\label{317}
\frac{d}{d \chi} \left(D_{(u,v)}\mathcal{F}(\bar{u},\bar{v},\chi)\right)(\bar u_k,\bar v_k)\vert_{\chi=\chi_k} \notin \mathcal{R}(D_{(u,v)}\mathcal{F}(\bar{u},\bar{v},\chi_k)),
\end{equation}
where $(\bar u_k,\bar v_k)$ is defined in (\ref{313}) and $\mathcal{R}(\cdot)$ denotes the range of the operator.  If (\ref{317}) fails, there must exist a nontrivial solution $(u,v)$ to the following problem
\begin{equation}\label{318}
\left\{\!\!
\begin{array}{ll}
D_1 u''+\chi_k\phi(\bar{v})\bar{u}v''-b_1\bar{u}u-c_1\bar{u}v
=-\!\big(\frac{k\pi }{L}\big)^2\phi(\bar{v})\bar{u} \cos \frac{k\pi x}{L} ,&x\in(0,L),\\
D_2  v''-b_2\bar{v}u-c_2\bar{v}v=0,&x\in(0,L),\\
u'(x)=v'(x)=0,&x=0,L.
\end{array}
\right.
\end{equation}
Multiplying the both equations in (\ref{318}) by $\cos \frac{k\pi x}{L}$ and then integrating them over $(0,L)$ by parts, we obtain
\begin{equation}\label{319}
\!\!\begin{pmatrix}
\!\!-D_1 \big(\frac{k \pi}{L}\big)^2\!\!\!-b_1\bar{u} & \!\!\!\!-\chi_k \big(\frac{k \pi}{L}\big)^2 \bar{u}\phi(\bar{v})\!\!-c_1\bar{u}   \\
~~\\
-b_2\bar{v} & \!\!-D_2\big(\frac{k \pi}{L}\big)^2\!\!-c_2\bar{v}
\end{pmatrix}\!\!
\!\begin{pmatrix}
\int_0^L u \cos \frac{k \pi x}{L} dx\\
~~\\
\int_0^L v \cos \frac{k \pi x}{L} dx
\end{pmatrix}\!\!=\!\!\begin{pmatrix}
\!-\frac{(k\pi)^2 \bar u \phi(\bar v)}{2L}\\
~~\\
0
\end{pmatrix}\!\!.
\end{equation}

The coefficient matrix of this system is singular because of (\ref{311}), hence we reach a contradiction in (\ref{319}) and this proves (\ref{317}).  Moreover, we must have that $\chi_k \neq \chi_j$ for all integers $k\neq j$, then (\ref{314}) follows from easy calculations and this finishes the proof of Theorem \ref{thm31}.
\end{proof}

\subsection{Stability analysis of the bifurcation branches near ($\bar{u},\bar{v},\chi_k$)}
We now study the stability or instability of the spatially inhomogeneous patterns $(u_k(s,x),\break v_k(s,x))$ established in Theorem \ref{thm31}.  Here the stability or instability is that of $(u_k(s,x),v_k(s,x))$ viewed as equilibrium of system (\ref{31}).  To this end, we first determine the turning direction of each bifurcation branch $\Gamma_k(s)$ around $(\bar u,\bar v,\chi_k)$, $k\in \mathbb N^+$.

The operator $\mathcal{F}$ defined in (\ref{37}) is $C^4$-smooth if $\phi$ is $C^5$-smooth, hence by Theorem 1.18 from \cite{CR}, $(u_k(s,x),v_k(s,x),\chi_k(s))$ in (\ref{315}) is a $C^3$-smooth function of $s$ and we can write the following expansions
\begin{equation}\label{320}
\left\{
\begin{array}{ll}
u_k(s,x)=\bar{u}+sQ_k\cos \frac{k\pi x}{L}+s^2\varphi_1(x)+s^3\varphi_2(x)+o(s^3),\\
v_k(s,x)=\bar{v}+s\cos \frac{k\pi x}{L}+s^2\psi_1(x)+s^3\psi_2(x)+o(s^3),\\
\chi_k(s)=\chi_k+sK_1+s^2K_2+o(s^2),
\end{array}
\right.
\end{equation}
where for $i=1,2$, $(\varphi_i,\psi_i)\in \mathcal{Z}$ defined in (\ref{316}) and $K_i$ is a constant.  $o(s^3)$ in the first two equations of (\ref{320}) are taken with respect to the $\mathcal{X}$-topology.   On the other hand, we have the following fact from Taylor expansions for $\phi\in C^5$
\begin{equation}\label{321}
\phi(v_k(s,x))=\phi(\bar{v})+s\phi'(\bar{v})\cos \frac{k\pi x}{L}+s^2\Big(\phi'(\bar{v}) \psi_1+\frac{\phi''(\bar{v})}{2}\cos^2\frac{k\pi x}{L}\Big)+o(s^3).
\end{equation}
Substituting (\ref{320}) and (\ref{321}) into (\ref{35}) and equating the $s^2$-terms, we collect the following system
\begin{equation}\label{322}
\left\{
\begin{array}{ll}
D_1 \varphi''_1+\chi_k\phi(\bar{v})\bar{u}\psi''_1 =(b_1\varphi_1+c_1\psi_1)\bar{u}+K_1\phi(\bar{v})\bar{u}(\frac{k\pi}{L})^2 \cos \frac{k\pi x}{L}+R_k ,\\
D_2\psi''_1=(b_2\varphi_1+c_2\psi_1)\bar{v}+(b_2Q_k+c_2)\cos^2 \frac{k \pi x}{L},\\
\varphi'_1(x)=\psi'_1(x)=0,x=0,L,
\end{array}
\right.
\end{equation}
where
\[R_k=\chi_k \Big(\frac{k\pi}{L} \Big)^2  \Big(\phi'(\bar{v})\bar{u}+Q_k \phi(\bar{v})\Big)\cos \frac{2k\pi x}{L}+(b_1Q_k+c_1)Q_k \cos^2 \frac{k \pi x}{L}.\]
Multiplying the first equation of (\ref{322}) by $\cos \frac{k\pi x}{L}$ and integrating it over $(0,L)$ by parts, we have
\begin{eqnarray}\label{323}
\frac{k^2\pi^2\phi(\bar{v})\bar{u}K_1}{2L}&=&-\big(\chi_k\phi(\bar{v})\bar{u}+c_1\bar{u} \big) \int_0^L \psi_1 \cos \frac{k\pi x}{L} dx\\
&&-\big(D_1\big(\frac{k\pi}{L}\big)^2+b_1\bar{u}\big) \int_0^L \varphi_1 \cos \frac{k\pi x}{L} dx\nonumber.
\end{eqnarray}
Multiplying the second equation of (\ref{322}) by $\cos \frac{k\pi x}{L}$, we have from the integration by parts that
\begin{equation}\label{324}
b_2 \bar{v}\int_0^L \varphi_1 \cos \frac{k\pi x}{L} dx+\Big(D_2\Big(\frac{k\pi}{L}\Big)^2+c_2\bar{v}\Big) \int_0^L \psi_1 \cos \frac{k\pi x}{L} dx=0.
\end{equation}
On the other hand, since $(\varphi_1,\psi_1)\in \mathcal{Z}$, we have from (\ref{316}) that
\begin{equation}\label{325}
Q_k\int_0^L \varphi_1 \cos \frac{k\pi x}{L} dx+\int_0^L \psi_1 \cos \frac{k\pi x}{L} dx=0,
\end{equation}
where $Q_k=-\frac{D_2 (\frac{k\pi}{L} )^2+c_2\bar{v}}{b_2 \bar{v}}$.  Solving (\ref{324}) and (\ref{325}) leads us to
\[(1+Q_k^2)\int_0^L \varphi_1 \cos \frac{k\pi x}{L} dx=0,\]
which implies that
\[\int_0^L \varphi_1 \cos \frac{k\pi x}{L} dx=\int_0^L \psi_1 \cos \frac{k\pi x}{L} dx=0,~\forall k\in \mathbb N^+,\]
therefore it follows from (\ref{323}) that $K_1=0$.

To determine the sign of $K_2$, we equate the $s^3$-terms in (\ref{35}) and collect
\begin{equation}\label{326}
\left\{
\begin{array}{ll}
D_1 \varphi''_2=K_2\phi(\bar{v})\bar{u}(\frac{k\pi}{L})^2\cos\frac{k\pi x}{L}+(2b_1Q_k\varphi_1+c_1\varphi_1+c_1Q_k\psi_1)\cos \frac{k\pi x}{L}\\
\hspace{1.4cm}+(b_1\varphi_2+c_1\psi_2)\bar{u}-\chi_k A_3,\\
D_2\psi''_2=(b_2\varphi_2+c_2\psi_2)\bar{v}+(b_2\varphi_1+(b_2Q_k+2c_2)\psi_1)\cos\frac{k \pi x}{L},\\
\varphi'_2(x)=\psi'_2(x)=0,x=0,L,
\end{array}
\right.
\end{equation}
where
\begin{eqnarray}
A_3&=&\phi(\bar{v})\bar{u}\psi''_2 -\Big(\phi(\bar{v}) \varphi'_1+\big(\phi(\bar{v})Q_k+2\phi'(\bar{v})\bar{u}\big)\psi'_1\Big)\Big(\frac{k\pi}{L} \Big)\sin \frac{k\pi x}{L}\nonumber\\
&&-\Big(\big(\phi(\bar{v}) \varphi_1+\phi'(\bar{v}) \bar{u}\psi_1\big)\Big(\frac{k\pi}{L}\Big)^2-\big(\phi(\bar{v}) Q_k+\phi'(\bar{v})\bar{u} \big)\psi''_1  \Big)\cos \frac{k\pi x}{L}\nonumber\\
&&+\Big(2\phi'(\bar{v})Q_k+\phi''(\bar{v})\bar{u} \Big)\Big(\frac{k\pi}{L} \Big)^2 \sin^2 \frac{k\pi x}{L} \cos \frac{k\pi x}{L} \nonumber\\
&&-\Big(\phi'(\bar{v})Q_k+\frac{1}{2}\phi''(\bar{v})\bar{u}\Big)\Big(\frac{k\pi}{L} \Big)^2  \cos^3 \frac{k\pi x}{L}\nonumber.
\end{eqnarray}
We have used in (\ref{326}) the Taylor expansion (\ref{321}) with $\phi$ replaced by $\phi'$.

Multiplying the first equation in (\ref{326}) by $\cos \frac{k\pi x}{L}$, we conclude from the integration by parts and straightforward calculations that
\begin{eqnarray}\label{327}
&&-\frac{\phi(\bar{v})\bar{u}(k\pi)^2}{2L}K_2\nonumber \\
=\!\!\!\!\!\!\!&\Big(&\!\!\!\!\!\!\!b_1\bar{u}\!+\!D_1(\frac{k\pi}{L})^2\Big)\!\!\!\int_0^L\!\!\!\! \varphi_2 \cos \!\frac{k\pi x}{L} dx\!+\!\Big(\!c_1\bar{u}\!+\!\chi_k\phi(\bar{v})\bar{u}\big(\frac{k\pi}{L}\big)^2\Big)\!\!\! \int_0^L\!\!\!\! \psi_2 \cos \!\frac{k\pi x}{L} dx\nonumber\\
\!\!\!\!\!&+&\!\!\!\!\!\frac{1}{2}\Big(\!2b_1Q_k\!+\!c_1\!+\!\chi_k\phi(\bar{v})\big(\frac{k\pi}{L}\big)^2\Big)\!\!\! \int_0^L \!\!\!\!\varphi_1 dx \!+\!\frac{1}{2}\Big(\!c_1Q_k\!+\!\chi_k\phi'(\bar v)\bar u\big(\frac{k\pi}{L}\big)^2\Big)\!\!\!\int_0^L\!\!\!\!\psi_1 dx\nonumber\\
&+&\!\!\!\!\!\frac{1}{2}\Big(\!2b_1Q_k\!+\!c_1\!-\!\chi_k\phi(\bar{v})\big(\frac{k\pi}{L}\big)^2 \Big)\!\!\!\int_0^L\!\!\!\! \varphi_1 \cos \!\frac{2k\pi x}{L}dx \\
&+&\!\!\!\!\!\frac{1}{2}\Big(\!c_1Q_k\!+\!\chi_k\big(2\phi(\bar{v})Q_k\!+\!\bar{u}\phi'(\bar{v})\big)\big(\frac{k\pi}{L}\big)^2\Big)\!\!\!\int_0^L\!\!\!\! \psi_1 \cos \!\frac{2k\pi x}{L} dx.\nonumber\\
&+&\!\!\!\!\!\!\Big(\phi'(\bar{v})Q_k\!+\!\frac{1}{2}\phi''(\bar{v})\bar{u}\Big) \frac{k^2\pi^2\chi_k}{8L}\!\nonumber
\end{eqnarray}
On the other hand, we test the second equation of (\ref{326}) by $\cos \frac{k\pi x}{L}$ over $(0,L)$ to obtain
\begin{eqnarray}\label{328}
&&b_2\bar{v}\int_0^L \varphi_2 \cos \frac{k\pi x}{L} dx+\Big(D_2\big(\frac{k\pi}{L}\big)^2+c_2\bar{v}\Big)\int_0^L \psi_2 \cos \frac{k\pi x}{L} dx \nonumber \\
&=&-b_2 \int_0^L \varphi_1\cos^2 \frac{k\pi x}{L} dx-(b_2Q_k+2c_2)\int_0^L \psi_1 \cos^2 \frac{k\pi x}{L} dx,
\end{eqnarray}
Since $(\varphi_2,\psi_2)\in \mathcal{Z}$, we obtain from (\ref{313}), (\ref{316}) and (\ref{328}) that
\begin{eqnarray}\label{329}
 \int_0^L \varphi_2 \cos \frac{k\pi x}{L} dx=&-&\frac{1}{(1+Q_k^2)\bar{v}} \int_0^L \varphi_1 \cos^2 \frac{k\pi x}{L} dx\nonumber\\
&-&\frac{(b_2Q_k+2c_2)}{b_2(1+Q_k^2)\bar{v}} \int_0^L \psi_1 \cos^2 \frac{k\pi x}{L} dx,
\end{eqnarray}
and
\begin{eqnarray}\label{330}
 \int_0^L \psi_2 \cos \frac{k\pi x}{L} dx&=&\frac{Q_k}{(1+Q_k^2)\bar{v}} \int_0^L \varphi_1 \cos^2 \frac{k\pi x}{L} dx\nonumber\\
&&+\frac{(b_2Q_k+2c_2)Q_k}{b_2(1+Q_k^2)\bar{v}} \int_0^L \psi_1 \cos^2 \frac{k\pi x}{L} dx.
\end{eqnarray}
Therefore, thanks to (\ref{328})--(\ref{330}), the $K_2$ equation (\ref{327}) becomes
\begin{eqnarray}\label{331}
&&-\frac{\phi(\bar{v})\bar{u}(k\pi)^2}{2L}K_2\nonumber \\
=\!\!\!\!\!\!&\Big(&\!\!\!\!\!\!\phi'(\bar{v})Q_k\!+\!\frac{1}{2}\phi''(\bar{v})\bar{u}\Big) \frac{k^2\pi^2\chi_k}{8L}+\!\frac{1}{2}\Big(2b_1Q_k\!+\!c_1\!+\!\chi_k\phi(\bar{v})\big(\frac{k\pi}{L}\big)^2\!-\!\!B\Big)\!\!\!\int_0^L\!\!\!\varphi_1dx \nonumber\\
&+&\!\!\!\!\!\frac{1}{2}\Big(c_1Q_k\!+\!\chi_k\bar{u}\phi'(\bar{v})\big(\frac{k\pi}{L}\big)^2\!-\!\frac{B\big(b_2Q_k\!+\!2c_2\big)}{b_2}\Big)\!\!\int_0^L\!\!\!\psi_1 dx \nonumber\\
&+&\!\!\!\!\!\frac{1}{2}\Big(2b_1Q_k\!+\!c_1\!-\!\chi_k\phi(\bar{v})\big(\frac{k\pi}{L}\big)^2\!-\!B\Big)\!\!\int_0^L\!\!\!\varphi_1\cos\frac{2k\pi x}{L} dx \\
&+&\!\!\!\!\!\frac{1}{2}\Big(c_1Q_k\!+\!\chi_k\big(\frac{k\pi}{L}\big)^2\!\!\big(2\phi(\bar{v})Q_k\!+\!\bar{u}\phi'(\bar{v})\big)\!-\!\frac{B\big(b_2Q_k\!\!+\!\!2C_2\big)}{b_2}\Big)\!\!\int_0^L\!\!\!\psi_1\cos\frac{2k\pi x}{L} dx, \nonumber
\end{eqnarray}
where in (\ref{331})
\begin{equation*}
B=\frac{D_1\big(\frac{k\pi}{L}\big)^2+b_1\bar{u}-Q_k\big(c_1\bar{u}+\chi_k\big(\frac{k\pi}{L}\big)^2\bar{u}\phi(\bar{v})\big)}
{\bar{v}\big(1+Q_k^2\big)}.
\end{equation*}
Moreover, in the light of (\ref{311}) and (\ref{314}), we conclude from (\ref{331})
\begin{eqnarray}\label{338}
\frac{k^2\pi^2\phi(\bar{v})\bar{u}}{2L}K_2&=&B_0+B_1\int_0^L\varphi_1 dx+B_2\int_0^L\psi_1 dx\\
&&+B_3\int_0^L\varphi_1\cos\frac{2k\pi x}{L} dx+B_4\int_0^L\psi_1\cos\frac{2k\pi x}{L} dx, \nonumber
\end{eqnarray}
where we have used the following notations in (\ref{338}),
\begin{flalign}
B_0=&-\frac{L}{16}\Big(2\phi'(\bar{v})Q_k+\bar{u}\phi''(\bar{v})\Big)\Big(\frac{Q_k}{\bar{u}\phi(\bar{v})}\big(\frac{k\pi}{L}\big)^2D_1
+\frac{b_1Q_k+c_1}{\phi(\bar{v})}\Big),&\nonumber\\
B_1=&\Big(\frac{Q_k}{2\bar{u}}+\frac{1}{2\bar{v}}\Big)\big(\frac{k\pi}{L}\big)^2D_1-\frac{b_1Q_k}{2}+\frac{b_1\bar{u}}{2\bar{v}},&\nonumber\\
B_2=&\Big(\frac{\phi'(\bar{v})Q_k}{2\phi(\bar{v})}+\frac{b_2Q_k+2c_2}{2b_2\bar{v}}\Big)\big(\frac{k\pi}{L}\big)^2D_1-\frac{c_1Q_k}{2}&\nonumber\\
&+\frac{\bar{u}\phi'(v)\big(b_1Q_k+c_1\big)}{2\phi(\bar{v})}+\frac{b_1\bar{u}\big(b_2Q_k+2c_2\big)}{2b_2\bar{v}},&\nonumber\\
B_3=&\Big(\frac{1}{2\bar{v}}-\frac{Q_k}{2\bar{u}}\Big)\big(\frac{k\pi}{L}\big)^2D_1-\frac{3b_1Q_k+2c_1}{2}+\frac{b_1\bar{u}}{2\bar{v}},&\nonumber\\ \nonumber
\end{flalign}
and
\begin{flalign}
B_4=&\Big(\frac{\bar{u}\phi'(\bar{v})+2\phi(\bar{v})Q_k}{2\bar{u}\phi(\bar{v})}Q_k+\frac{b_2Q_k+2c_2}{2b_2\bar{v}}\Big)
\big(\frac{k\pi}{L}\big)^2D_1+\frac{b_1\bar{u}}{2b_2\bar{v}}\Big(b_2Q_k+2c_2\Big)  &\nonumber\\
&+\frac{\big(\bar{u}\phi'(\bar{v})+2\phi(\bar{v})Q_k\big)\big(b_1Q_k+c_1\big)}{2\phi(\bar{v})}.&\nonumber
\end{flalign}
Hence, we need to evaluate the following integrals to calculate $K_2$ in (\ref{338})
\begin{equation*}
\int_0^L\varphi_1 dx,\int_0^L\psi_1 dx,\int_0^L\varphi_1\cos\frac{2k\pi x}{L} dx, \text{~and~}\int_0^L\psi_1\cos\frac{2k\pi x}{L}dx.
\end{equation*}
To find the first two integrals, we integrate both equations in (\ref{322}) over $(0,L)$, and then because of $K_1=0$, we obtain from straightforward calculations that
\begin{equation}\label{332}
\int_0^L\varphi_1 dx=\frac{c_1\bar u L(b_2Q_k+c_2)-c_2\bar v L(b_1Q_k^2+c_1Q_k)}{2\bar u\bar v(b_1c_2-b_2c_1)}
\end{equation}
and
\begin{equation}\label{333}
\int_0^L\psi_1 dx=\frac{b_2\bar vL(b_1Q_k^2+c_1Q_k)-b_1\bar u L(b_2Q_k+c_2)}{2\bar u\bar v(b_1c_2-b_2c_1)}.
\end{equation}
To find the last two integrals, we multiply both two equations in (\ref{322}) by $\cos\frac{2k\pi x}{L}$ and integrate them over $(0,L)$ by parts.  Then again since $K_1=0$, we have from straightforward calculations that
\begin{eqnarray}\label{334}
\int_0^L\varphi_1\cos\frac{2k\pi x}{L} dx=\frac{|\mathcal{A}_1|}{|\mathcal{A}|},~~ \int_0^L\psi_1\cos\frac{2k\pi x}{L} dx=\frac{|\mathcal{A}_2|}{|\mathcal{A}|}
\end{eqnarray}
where
\begin{eqnarray}\label{335}
|\mathcal{A}|\!\!\!\!\!&=&\!\!\!\!\!\Big(D_1\big(\frac{2k\pi}{L}\big)^2\!+\!b_1\bar{u}\Big)\Big(D_2\big(\frac{2k\pi}{L}\big)^2\!+\!c_2\bar{v}\Big)\!-\!
b_2\bar{u}\bar{v}\Big(\chi_k\phi(\bar{v})\big(\frac{2k\pi}{L}\big)^2\!+\!c_1\Big), \\
|\mathcal{A}_1|\!\!\!\!\!&=&\!\!\!\!\!\frac{\Big(2\chi_k\big(k\pi\big)^2\big(\bar{u}\phi'(\bar{v})+Q_k\phi(\bar{v})\big)+L^2\big(b_1Q_k^2+c_1Q_k\big)\Big)\Big(D_2
\big(\frac{2k\pi}{L}\big)^2+c_2\bar{v}\Big)}{4L}\nonumber\\
&&+\frac{L\Big(b_2Q_k+c_2\Big)\Big(\chi_k\bar{u}\phi(\bar{v})\big(\frac{2k\pi}{L}\big)^2+c_1\bar{u}\Big)}{4},\label{336}\\
|\mathcal{A}_2|\!\!\!\!\!&=&\!\!\!\!\!\frac{b_2\bar{v}\Big(2\chi_k\big(k\pi\big)^2\big(\bar{u}\phi'(\bar{v})+Q_k\phi(\bar{v})\big)+L^2
\big(b_1Q_k^2+c_1\big)\Big)}{4L}\label{337}\\
&&-\frac{L\Big(b_2Q_k+c_2\Big)\Big(D_1\big(\frac{2k\pi}{L}\big)^2+b_1\bar{u}\Big)}{4}\nonumber
\end{eqnarray}

We observe that $K_2$ in (\ref{338}) is extremely complicated and it is very hard to obtain the sign of $K_2$ which determines the stability of bifurcating solution $(u_k(s,x),v_k(s,x),\chi_k)$.  On the other hand, as we shall see in the coming sections, (\ref{35}) admits nontrivial positive solutions $(u,v)$ that have interior transition layers if $D_1$ is sufficiently large and $D_2$ is sufficiently small, therefore, throughout the rest of this section, for the purpose of mathematical modeling of species segregation as well as the simplicity of calculations, we assume that $\min\{D_1,\frac{1}{D_2}\}$ is sufficiently large and now we present the following results on the sign of $K_2$.

\begin{proposition}\label{prop2}Assume that the assumptions in Theorem \ref{thm31} hold.  For each $k\geq 1$, there exists a large $D^*=D^*(a_i,b_i,c_i)>0$ such that for all $D_1,D_2$ with $\min \{D_1,\frac{1}{D_2}\}>D^*$, we have the following results about $K_2$ in (\ref{320}),

(i).  when $\frac{\phi'(\bar v)}{\phi(\bar v)}-\frac{c_2}{b_2\bar{u}}=0$, $K_2>0$ if $\frac{\phi''(\bar{v})}{\phi(\bar{v})}>\frac{2c_2^2}{b^2_2\bar{u}}$ and $K_2<0$ if $\frac{\phi''(\bar{v})}{\phi(\bar{v})}<\frac{2c_2^2}{b^2_2\bar{u}}$;

(ii).  when $\frac{\phi'(\bar v)}{\phi(\bar v)}-\frac{c_2}{b_2\bar{u}}\ne0$, $K_2>0$ if $D_1D_2 (\frac{k\pi}{L} )^4<\frac{(b_1c_2-b_2c_1)\bar{u}\bar{v}}{4}$ and $K_2<0$ if $D_1D_2 (\frac{k\pi}{L})^4>\frac{(b_1c_2-b_2c_1)\bar{u}\bar{v}}{4}$.
\end{proposition}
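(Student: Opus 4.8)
The statement has already been reduced, on the preceding pages, to a closed-form question: substituting the explicit values (\ref{332})--(\ref{337}) of the four integrals $\int_0^L\varphi_1$, $\int_0^L\psi_1$, $\int_0^L\varphi_1\cos\frac{2k\pi x}{L}$, $\int_0^L\psi_1\cos\frac{2k\pi x}{L}$ into (\ref{338}) turns $K_2$ into an explicit rational function of the two diffusion rates $D_1$ and $D_2$, all of the remaining quantities ($a_i,b_i,c_i$, $\bar u$, $\bar v$, $L$, $k$, $\phi(\bar v)$, $\phi'(\bar v)$, $\phi''(\bar v)$, and $Q_k$, $\chi_k$) being fixed and determined by these. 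The plan is to extract the leading-order behaviour of this function as $\min\{D_1,1/D_2\}\to\infty$ and read off its sign; the threshold $D^*$ in the statement is then whatever size forces the leading term to dominate the remainder uniformly over $\{\min\{D_1,1/D_2\}>D^*\}$.

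First I would record the relevant asymptotics. From (\ref{313}), $Q_k=-\frac{D_2(k\pi/L)^2+c_2\bar v}{b_2\bar v}=-\frac{c_2}{b_2}-\frac{D_2(k\pi/L)^2}{b_2\bar v}\to-\frac{c_2}{b_2}$ as $D_2\to0$, while from (\ref{311}), $\chi_k\sim\frac{c_2D_1}{b_2\bar u\phi(\bar v)}$ as $D_1\to\infty$. The decisive algebraic observation is that, once (\ref{311}) is used for $\chi_k$, the denominator $|\mathcal{A}|$ of (\ref{334}) collapses: the pure-$D_1$ and pure-$D_2$ terms cancel and one is left with the binomial
\[|\mathcal{A}|=3\bigl(4D_1D_2(k\pi/L)^4-(b_1c_2-b_2c_1)\bar u\bar v\bigr).\]
This is precisely the quantity that the nondegeneracy hypothesis (\ref{314}), read for the mode pair $(k,2k)$, forbids from being zero, so the Cramer formulas (\ref{334}) are legitimate throughout; moreover its sign is exactly the one appearing in part (ii). The integrals $\int_0^L\varphi_1$, $\int_0^L\psi_1$ of (\ref{332})--(\ref{333}) stay bounded, whereas $\int_0^L\varphi_1\cos\frac{2k\pi x}{L}$ and $\int_0^L\psi_1\cos\frac{2k\pi x}{L}$ carry the factor $1/|\mathcal{A}|$ together with extra powers of $D_1$; in their numerators $|\mathcal{A}_1|,|\mathcal{A}_2|$ the largest term is the one proportional to $\chi_k\bigl(\bar u\phi'(\bar v)+Q_k\phi(\bar v)\bigr)$, and in the limit $\bar u\phi'(\bar v)+Q_k\phi(\bar v)$ tends to $\bar u\phi(\bar v)\bigl(\frac{\phi'(\bar v)}{\phi(\bar v)}-\frac{c_2}{b_2\bar u}\bigr)$. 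This is where the two cases part.

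In case (ii), $\frac{\phi'(\bar v)}{\phi(\bar v)}-\frac{c_2}{b_2\bar u}\ne0$, so $|\mathcal{A}_1|,|\mathcal{A}_2|$ are genuinely of order $D_1$; consequently the two terms $B_3\int_0^L\varphi_1\cos\frac{2k\pi x}{L}+B_4\int_0^L\psi_1\cos\frac{2k\pi x}{L}$ in (\ref{338}) are, uniformly over the region, larger than the remaining part $B_0+B_1\int_0^L\varphi_1+B_2\int_0^L\psi_1$ by a factor of order $\min\{D_1,1/D_2\}$, and the sign of $K_2$ is that of $\bigl(B_3|\mathcal{A}_1|+B_4|\mathcal{A}_2|\bigr)/|\mathcal{A}|$. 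A computation of the leading coefficient of $B_3|\mathcal{A}_1|+B_4|\mathcal{A}_2|$ shows it to be negative, which gives $\mathrm{sign}\,K_2=-\mathrm{sign}\,|\mathcal{A}|=\mathrm{sign}\bigl((b_1c_2-b_2c_1)\bar u\bar v-4D_1D_2(k\pi/L)^4\bigr)$, i.e. (ii). In case (i), $\frac{\phi'(\bar v)}{\phi(\bar v)}-\frac{c_2}{b_2\bar u}=0$, that leading contribution vanishes identically (indeed $\bar u\phi'(\bar v)+Q_k\phi(\bar v)=-\frac{D_2(k\pi/L)^2\phi(\bar v)}{b_2\bar v}$), the cosine integrals drop an order, and all five terms of (\ref{338}) then contribute at order $D_1$; the $\phi''$-dependence enters only through the factor $2\phi'(\bar v)Q_k+\bar u\phi''(\bar v)$ of $B_0$, which under the case-(i) relation $\phi'(\bar v)=\frac{c_2\phi(\bar v)}{b_2\bar u}$ and $D_2\to0$ becomes a positive multiple of $\frac{\phi''(\bar v)}{\phi(\bar v)}-\frac{2c_2^2}{b_2^2\bar u}$; assembling the surviving $O(D_1)$ coefficients then yields $\mathrm{sign}\,K_2=\mathrm{sign}\bigl(\frac{\phi''(\bar v)}{\phi(\bar v)}-\frac{2c_2^2}{b_2^2\bar u}\bigr)$, i.e. (i).

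The main obstacle is purely computational bookkeeping. Formula (\ref{338}) with (\ref{332})--(\ref{337}) inserted is extremely long, and everything depends on several cancellations surviving intact: that $|\mathcal{A}|$ collapses to the displayed binomial, that the leading coefficient of $B_3|\mathcal{A}_1|+B_4|\mathcal{A}_2|$ in case (ii) reduces to a sign-definite expression, and that in case (i) the residual $O(D_1)$ contributions from $B_0,\dots,B_4$ recombine into a multiple of $\frac{\phi''(\bar v)}{\phi(\bar v)}-\frac{2c_2^2}{b_2^2\bar u}$ with no leftover. One must therefore expand $Q_k$, $\chi_k$, $|\mathcal{A}|$, $|\mathcal{A}_1|$, $|\mathcal{A}_2|$ and $B_0,\dots,B_4$ to a high enough order in both $D_1$ and $1/D_2$, and --- this is the delicate point --- check that the identified leading term controls the remainder \emph{uniformly}, in particular near the locus $4D_1D_2(k\pi/L)^4=(b_1c_2-b_2c_1)\bar u\bar v$ where $|\mathcal{A}|$, though kept nonzero by (\ref{314}), is small.
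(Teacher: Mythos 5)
Your proposal follows essentially the same route as the paper's proof: expand $Q_k$, $\chi_k$, the coefficients $B_0,\dots,B_4$ and the four integrals from (\ref{332})--(\ref{337}) asymptotically as $D_1\to\infty$, $D_2\to0$, observe that substituting (\ref{311}) collapses $|\mathcal{A}|$ to $3\bigl(4D_1D_2(k\pi/L)^4-(b_1c_2-b_2c_1)\bar u\bar v\bigr)$, and split into the two cases according to whether $\frac{\phi'(\bar v)}{\phi(\bar v)}-\frac{c_2}{b_2\bar u}$ vanishes, exactly as in (\ref{339})--(\ref{340}). The structural observations you single out (the sign of $K_2$ in case (ii) being governed by $-\mathrm{sign}\,|\mathcal{A}|$ via a square factor $\bigl(\frac{c_2}{b_2\bar u}-\frac{\phi'(\bar v)}{\phi(\bar v)}\bigr)^2$, and the $\phi''$-term surviving through $B_0$ in case (i)) are precisely the ones the paper uses.
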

\begin{proof}
For $D_1\rightarrow+\infty$ and $D_2\rightarrow0^+$, we have $Q_k=-\frac{c_2}{b_2}+O(D_2)$ and the asymptotic expansions
\begin{eqnarray}
B_0\!\!\!\!\!&=&\!\!\!\!\!D_1\Big(\frac{L}{16}\frac{c_2}{b_2\bar{u}\phi(\bar{v})}\big(\phi''(\bar{v})\bar{u}-\frac{2\phi'(\bar{v})c_2}{b_2}\big)
\big(\frac{k\pi}{L}\big)^2+O(1/D_1)\Big),\nonumber\\
B_1\!\!\!\!\!&=&\!\!\!\!\!D_1\Big(\big(\frac{1}{2\bar{v}}-\frac{c_2}{2b_2\bar{u}}\big)\big(\frac{k\pi}{L}\big)^2+O(1/D_1)\Big),\nonumber\\
B_2\!\!\!\!\!&=&\!\!\!\!\!D_1\Big(\big(\frac{c_2}{2b_2\bar{v}}-\frac{c_2\phi'(\bar{v})}{2b_2\phi(\bar{v})}\big)\big(\frac{k\pi}{L}\big)^2
+O(1/D_1)\Big),\nonumber\\ B_3\!\!\!\!\!&=&\!\!\!\!\!D_1\Big(\big(\frac{1}{2\bar{v}}+\frac{c_2}{2b_2\bar{u}}\big)\big(\frac{k\pi}{L}\big)^2+O(1/D_1)\Big),\nonumber\\ \nonumber
\end{eqnarray}
and
\begin{eqnarray}
B_4\!\!\!\!\!&=&\!\!\!\!\!D_1\Big(\big(\frac{c^2_2}{b_2^2\bar{u}}+\frac{c_2}{2b_2\bar{v}}-\frac{c_2\phi'(\bar{v})}{2b_2\phi(\bar{v})}
\big)\big(\frac{k\pi}{L}\big)^2+O(1/D_1)\Big).\nonumber
\end{eqnarray}
Thanks to these expansions, we see that the $K_2$ equation (\ref{338}) implies
\begin{eqnarray}\label{339}
&&\frac{\bar{u}\phi(\bar{v})L}{2D_1} K_2 \nonumber\\
=\!\!\!\!\!&& \frac{L}{16}\frac{c_2}{b_2\bar{u}\phi(\bar{v})}\Big(\phi''(\bar{v})\bar{u}-\frac{2\phi'(\bar{v})c_2}{b_2}\Big)
+\Big(\frac{1}{2\bar{v}}-\frac{c_2}{2b_2\bar{u}}\Big)\int_0^L\!\!\!\varphi_1 dx\nonumber\\
&&-\Big(\frac{c_2}{2b_2\bar{v}}-\frac{c_2\phi'(\bar{v})}{2b_2\phi(\bar{v})}\Big)\frac{b_2}{c_2}
\int_0^L\!\!\!\varphi_1 dx+\Big(\frac{1}{2\bar v}+\frac{c_2}{2b_2\bar{u}}\Big)\int_0^L\!\!\!\varphi_1\cos\frac{2k\pi x}{L} dx\nonumber\\
&&-\Big(\frac{c^2_2}{b_2^2\bar{u}}+\frac{c_2}{2b_2\bar{v}}-\frac{c_2\phi'(\bar{v})}{2b_2\phi(\bar{v})}\Big)
\frac{b_2}{c_2}\int_0^L\!\!\!\varphi_1\cos\frac{2k\pi x}{L} dx+O(1/D_1)\!\!\int_0^L\!\!\!\varphi_1\cos\frac{2k\pi}{L} dx\nonumber\\
&&+O(1/D_1)+O(D_2)\nonumber\\
&=&\!\!\!\!\!\frac{L}{16}\frac{c_2}{b_2\bar{u}\phi(\bar v)}\Big(\phi''(\bar{v})\bar{u}-\frac{2\phi'(\bar{v})c_2}{b_2}\Big)+O(1/D_1)\int_0^L\varphi_1\cos\frac{2k\pi x}{L} dx\nonumber \\
&&+\frac{1}{2}\Big(\frac{\phi'(\bar v)}{\phi(\bar v)}-\frac{c_2}{b_2\bar{u}}\Big) \Big(\int_0^L\varphi_1 dx+\int_0^L\varphi_1\cos\frac{2k\pi x}{L} dx\Big)\\
&&+O(1/D_1)+O(D_2) \nonumber
\end{eqnarray}
where we have used in (\ref{339}) the facts
\[\int_0^L\psi_1 dx=\big(-\frac{c_2}{b_2}+O(D_2)\big)\int_0^L\varphi_1 dx,\]
from (\ref{333}) and
\begin{equation*}
\int_0^L\psi_1\cos\frac{2k\pi x}{L} dx=\Big(-\frac{b_2}{c_2}+O(D_2)\Big)\int_0^L\varphi_1\cos\frac{2k\pi x}{L} dx.
\end{equation*}
from (\ref{334}), (\ref{335}) and (\ref{337}).  On the other hand, we have that (\ref{332})--(\ref{334}) become
\begin{equation*}
\int_0^L\varphi_1 dx=-\frac{c^2_2L}{2b^2_2\bar{u}}+O(D_2),
\end{equation*}
\begin{equation*}
\int_0^L\varphi_1\cos\frac{2k\pi x}{L}dx=\frac{D_1 \Big(\frac{c_2^2 \bar v L}{b_2}\big(\frac{c_2}{b_2\bar{u}}-\frac{\phi'(\bar{v})}{\phi(\bar{v})}\big)\big(\frac{k\pi}{L}\big)^2+O(1/D_1)\Big)}{24D_1D_2
\big(\frac{k\pi}{L}\big)^4-6\big(b_1c_2-b_2c_1\big)\bar{u}\bar{v}},
\end{equation*}
We now divide our discussions into the following two cases.  If $\frac{\phi'(\bar v)}{\phi(\bar v)}-\frac{c_2}{b_2\bar{u}}=0$, we see in (\ref{339}) that
\[\frac{\bar{u}\phi(\bar{v})L}{2D_1}K_2=\frac{L}{16}\frac{c_2}{b_2}\Big(\frac{\phi''(\bar v)}{\phi(\bar{v})}-\frac{2c^2_2}{b^2_2\bar{u}^2}\Big)+O(1/D_1)+O(D_2),\]
therefore, \emph{(i)} follows immediately.  If $\frac{\phi'(\bar v)}{2\phi(\bar v)}-\frac{c_2}{2b_2\bar{u}}\neq0$, we have that
\begin{equation}\label{340}
\frac{\bar{u}\phi(\bar v)L}{2D_1}K_2 =
-\frac{D_1\Big(c^2_2\bar{v}L\big(\frac{k\pi}{L}\big)^2\big(\frac{c_2}{b_2\bar{u}}-\frac{\phi'(\bar{v})}{\phi(\bar{v})}\big)^2
+O(1/D_1)\Big)}{12b_2\Big(4D_1D_2\big(\frac{k\pi}{L}\big)^4-\big(b_1c_2-b_2c_1\big)\bar{u}\bar{v}\Big)}.
\end{equation}
We conclude that \emph{(ii)} is an immediate consequence of (\ref{340}).
\end{proof}
\begin{remark}\label{remark2}
According to Theorem \ref{thm31}, we have that $4D_1D_2\big(\frac{k\pi}{L}\big)^4-\big(b_1c_2-b_2c_1\big)\bar{u} \bar{v}$ is always nonzero since $\chi_{k}\neq \chi_{2k}$.  We also point out that for the strong competition case $\frac{b_1}{b_2}<\frac{a_1}{a_2}<\frac{c_1}{c_2}$ in Proposition \ref{prop2}, $K_2$ is always negative if $\min\{D_1,\frac{1}{D_2}\}$ is large, independent on the size of $D_1D_2$.
\end{remark}

We continue with the stability analysis of the bifurcating solutions $(u_k(s,x),v_k(s,\break x),\chi_k(s))$ with $s\in (-\delta,\delta)$.  This branch of solutions will be asymptotically stable if the real part of any eigenvalue $\lambda$ of the following problem is negative:
\begin{equation}\label{341}
D_{(u,v)}\mathcal{F}(u_k(s,x),v_k(s,x),\chi_k(s))(u,v)=\lambda(u,v),~(u,v)\in \mathcal{X} \times \mathcal{X}.
\end{equation}
Taking $s=0$, we see that $\lambda=0$ is a simple eigenvalue of $D_{(u,v)}\mathcal{F}(\bar{u},\bar{v},\chi_k)$ with eigenspace $\mathcal{N}\big(D_{(u,v)}\mathcal{F}(\bar{u},\bar{v},\chi_k)\big)=\{(Q_k,1)\cos \frac{k\pi x}{L}\}$; moreover, following the same analysis that leads to (\ref{317}), one can also prove that $(Q_k,1)\cos \frac{k\pi x}{L} \not\in\mathcal{R}\big(D_{(u,v)}\mathcal{F}(\bar{u},\break\bar{v},\chi_k)\big)$.  We now present the following stability results.

\begin{theorem}\label{thm32}
Suppose that $\chi_{k_0}=\min_{k\in\mathbb N^+}\chi_k$ of (\ref{311}), then the following statements hold:

(i). $(u_k(s,x),v_k(s,x))$, $s\in(-\delta,\delta)$, is unstable for all positive integers $k\neq k_0$;

(ii). Suppose that $\min \{D_1,\frac{1}{D_2}\}$ is large, then \emph{(iia)}: when $\frac{\phi'(\bar v)}{\phi(\bar v)}-\frac{c_2}{b_2\bar{u}}=0$, $(u_{k_0}(s,x),v_{k_0}(s,x))$ is stable if $\frac{\phi''(\bar{v})}{\phi(\bar{v})}>\frac{2c_2^2}{b^2_2\bar{u}}$ and $(u_{k_0}(s,x),v_{k_0}(s,x))$ is unstable if $\frac{\phi''(\bar{v})}{\phi(\bar{v})}<\frac{2c_2^2}{b^2_2\bar{u}}$; \emph{(iib)}: when $\frac{\phi'(\bar v)}{\phi(\bar v)}-\frac{c_2}{b_2\bar{u}}\ne0$, $(u_{k_0}(s,x),v_{k_0}(s,x))$ is stable if $D_1D_2 (\frac{{k_0}\pi}{L} )^4\break<\frac{(b_1c_2-b_2c_1)\bar{u}\bar{v}}{4}$ and $(u_{k_0}(s,x),v_{k_0}(s,x))$ is unstable if $D_1D_2 (\frac{{k_0}\pi}{L})^4>\frac{(b_1c_2-b_2c_1)\bar{u}\bar{v}}{4}$.
\end{theorem}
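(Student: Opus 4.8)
The plan is to deduce the stability of each bifurcating branch $\Gamma_k(s)$ from the Crandall--Rabinowitz linearized stability theorem \cite{CR2}, combined with a count of the spectrum of the linearization at the constant state and with the sign of $K_2$ supplied by Proposition \ref{prop2}. Since $\bar v'\equiv0$, the first-order advection terms in \eqref{38} vanish at $(\bar u,\bar v)$, so $D_{(u,v)}\mathcal{F}(\bar u,\bar v,\chi)$ is block-diagonalized by the Neumann eigenfunctions $\cos\frac{j\pi x}{L}$, $j\geq0$, with $j$-th block
\[
M_j(\chi)=\begin{pmatrix}-D_1\big(\tfrac{j\pi}{L}\big)^2-b_1\bar u & -\chi\big(\tfrac{j\pi}{L}\big)^2\bar u\phi(\bar v)-c_1\bar u\\ -b_2\bar v & -D_2\big(\tfrac{j\pi}{L}\big)^2-c_2\bar v\end{pmatrix},
\]
so that $\sigma\big(D_{(u,v)}\mathcal{F}(\bar u,\bar v,\chi)\big)=\bigcup_{j\geq0}\sigma\big(M_j(\chi)\big)$. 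One always has $\operatorname{tr}M_j(\chi)<0$, and using \eqref{311} one checks $\det M_j(\chi)=b_2\bar u\bar v\big(\tfrac{j\pi}{L}\big)^2\phi(\bar v)(\chi_j-\chi)$ for $j\geq1$ and $\det M_0=(b_1c_2-b_2c_1)\bar u\bar v$; hence $M_j(\chi)$ with $j\ge1$ has both eigenvalues in $\{\operatorname{Re}\lambda<0\}$ exactly when $\chi<\chi_j$, and $M_0$ does so precisely when $b_1c_2>b_2c_1$.

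For \emph{(i)}, fix $k\neq k_0$. As the $\chi_j$ are pairwise distinct by \eqref{314} and $\chi_{k_0}=\min_j\chi_j$, we get $\chi_{k_0}<\chi_k$, so $\det M_{k_0}(\chi_k)<0$ and $M_{k_0}(\chi_k)$ has a positive eigenvalue; thus $D_{(u,v)}\mathcal{F}(\bar u,\bar v,\chi_k)$ possesses an isolated eigenvalue with positive real part, bounded away from the imaginary axis. Because $D_{(u,v)}\mathcal{F}(u_k(s,x),v_k(s,x),\chi_k(s))$ differs from $D_{(u,v)}\mathcal{F}(\bar u,\bar v,\chi_k)$ only through lower-order coefficients that converge uniformly as $s\to0$, continuity of the spectrum of sectorial operators with compact resolvent keeps this eigenvalue in the right half-plane for $|s|$ small, and $(u_k(s,x),v_k(s,x))$ is unstable.

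For \emph{(ii)} take $k=k_0$, so that $\chi_{k_0}<\chi_j$ for every $j\ge1$ with $j\ne k_0$. Then, \emph{provided} $\det M_0>0$, i.e.\ in the weak competition regime (in the strong competition case $M_0$ already destabilizes $(\bar u,\bar v)$ and every nearby solution, consistent with Remark \ref{remark2}), every eigenvalue of $D_{(u,v)}\mathcal{F}(\bar u,\bar v,\chi_{k_0})$ except the algebraically simple $0$---whose eigenfunction $(Q_{k_0},1)\cos\frac{k_0\pi x}{L}$ is not in the range, as recalled before the theorem---lies in $\{\operatorname{Re}\lambda<0\}$ with a uniform gap. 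Since the Fredholm-index and transversality hypotheses have been verified above, the Crandall--Rabinowitz stability theorem yields $C^1$ functions $\gamma(\chi)$ near $\chi_{k_0}$ and $\mu(s)$ near $0$---the perturbed zero eigenvalues of $D_{(u,v)}\mathcal{F}(\bar u,\bar v,\chi)$ and of $D_{(u,v)}\mathcal{F}(u_{k_0}(s,x),v_{k_0}(s,x),\chi_{k_0}(s))$---with $\gamma(\chi_{k_0})=\mu(0)=0$ and $\operatorname{sign}\mu(s)=\operatorname{sign}\big(-s\,\chi_{k_0}'(s)\,\gamma'(\chi_{k_0})\big)$ for small $s\ne0$. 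As $\gamma(\chi)$ is the small eigenvalue of $M_{k_0}(\chi)$, $\gamma'(\chi_{k_0})=(\det M_{k_0})'(\chi_{k_0})/\operatorname{tr}M_{k_0}(\chi_{k_0})>0$, being a ratio of two negative numbers; and because $K_1=0$, the expansion \eqref{320} gives $\chi_{k_0}'(s)=2K_2 s+o(s)$, whence $-s\,\chi_{k_0}'(s)\,\gamma'(\chi_{k_0})=-2K_2\,\gamma'(\chi_{k_0})\,s^2+o(s^2)$ has the sign of $-K_2$. So $\mu(s)<0$ if $K_2>0$ and $\mu(s)>0$ if $K_2<0$, and with the uniform gap for the rest of the spectrum this makes $(u_{k_0}(s,x),v_{k_0}(s,x))$ asymptotically stable when $K_2>0$ and unstable when $K_2<0$. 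Inserting the sign of $K_2$ from Proposition \ref{prop2}, valid for $\min\{D_1,1/D_2\}$ large, gives exactly \emph{(iia)} and \emph{(iib)}.

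The computations behind the above---the $2\times2$ trace and determinant identities and the adjoint null vector showing that $0$ is algebraically simple (which reduces to the non-membership $(Q_{k_0},1)\cos\frac{k_0\pi x}{L}\notin\mathcal{R}\big(D_{(u,v)}\mathcal{F}(\bar u,\bar v,\chi_{k_0})\big)$ already proved by the argument around \eqref{317})---are routine. The step I expect to demand the most care is the spectral-perturbation argument: one must argue that apart from $\mu(s)$ tracked by \cite{CR2} (for $k=k_0$), or the persistent positive eigenvalue (for $k\neq k_0$), no other portion of $\sigma\big(D_{(u,v)}\mathcal{F}(u_k(s,x),v_k(s,x),\chi_k(s))\big)$ reaches the imaginary axis as $|s|\to0$. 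This is a consequence of the sectoriality of the elliptic operator in \eqref{38}, compactness of its resolvent, and the uniform spectral gap of the limit $D_{(u,v)}\mathcal{F}(\bar u,\bar v,\chi_k)$, but it should be stated explicitly rather than left implicit.
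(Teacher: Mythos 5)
Your proposal is correct and follows the same route as the paper: decompose the linearization at $(\bar u,\bar v)$ into $2\times 2$ Fourier blocks to get part \emph{(i)} from a persisting positive eigenvalue, and combine Corollary 1.13 and Theorem 1.16 of \cite{CR2} with $K_1=0$ and Proposition \ref{prop2} for part \emph{(ii)}. Where you diverge from the paper's execution, you are the one who is right. The paper's (\ref{344}) carries a sign error: differentiating $\chi\phi(\bar v)\bar u v''$ in $\chi$ and evaluating at $\chi=\chi_{k_0}$ produces $+\phi(\bar v)\bar u\big(\cos\frac{k_0\pi x}{L}\big)''$, not the $-$ that appears there; this propagates to the paper's $\dot\mu(\chi_{k_0})<0$ and to $\operatorname{sgn}\lambda(s)=\operatorname{sgn}K_2$, which is incompatible with the theorem's own conclusion. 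Your computation $\gamma'(\chi_{k_0})=(\det M_{k_0})'(\chi_{k_0})/\operatorname{tr}M_{k_0}(\chi_{k_0})>0$, hence $\operatorname{sgn}\mu(s)=-\operatorname{sgn}K_2$, is the correct version and is what the stated theorem requires. You are also more careful on two points the paper leaves implicit: the zero Fourier block $M_0$, whose determinant $(b_1c_2-b_2c_1)\bar u\bar v$ must be positive (weak competition) for the stability assertion in \emph{(ii)} to survive at all, and the uniform spectral gap argument needed to ensure no other part of the spectrum reaches the imaginary axis as $s\to 0$; both should indeed be stated explicitly.
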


\begin{proof}
To prove \emph{(i)}, we first study the limit of (\ref{341}) for all $k\in \mathbb N^+$ as $s\rightarrow 0$, i.e, the following eigenvalue problem
\begin{equation}\label{341a}
\left\{
\begin{array}{ll}
D_1 u''+\bar \chi_k \phi(\bar{v})\bar{u}v''-b_1\bar{u}u-c_1\bar{u}v=\lambda u,& x\in(0,L),  \\
D_2 v''-b_2\bar{v}u-c_2\bar{v}v=\lambda v,& x\in(0,L),  \\
u'(x)=v'(x)=0,&x=0,L.
\end{array}
\right.
\end{equation}
Multiplying (\ref{341a}) by $\cos\frac{k\pi x}{L}$ and then iterating it over $(0,L)$ by parts, we have that
\[\begin{pmatrix}\!\!
-D_1 \!\big(\frac{k \pi}{L}\big)^2\!\!-\!b_1\bar{u}-\lambda & -\chi_k \big(\frac{k \pi}{L}\big)^2 \!\phi(\bar{u},\bar{v})\!-\!c_1\bar{u}   \\
-b_2\bar{v} & -D_2\big(\frac{k \pi}{L}\big)^2\!\!-\!c_2\bar{v}-\lambda
\!\!\end{pmatrix}\!
\!\!\begin{pmatrix}\!
\int_0^L \!\!{u}\cos\frac{k\pi x}{L}dx \\
\int_0^L\!\! {v}\cos\frac{k\pi x}{L}dx
\end{pmatrix}\!\!=\!\!\begin{pmatrix}
0\\
0
\end{pmatrix}.
\]
$\lambda$ is an eigenvalue of (\ref{341}) if and only if $p(\lambda)=\lambda^2+\text{Tr}\lambda+\text{Det}=0$, where
\[\text{Tr}=\big(D_1+D_2\big)\Big(\frac{k\pi}{L}\Big)^2+b_1\bar u+c_2\bar v>0\]
and
\[\text{Det}=\big(D_1\big(\frac{k\pi}{L}\big)^2+b_2\bar{u}\big)\big(D_2\big(\frac{k\pi}{L}\big)^2 +c_2\bar v\big)-\big(\chi\bar{u}\phi(\bar{v})\big(\frac{k\pi}{L}\big)^2+c_1\bar u\big)b_2 \bar{v}.\]
It is easy to see that $p(\lambda)=0$ has a positive root $\lambda>0$ if $\text{Det}>0$, or $\chi=\chi_{k_0}=\min_{k\in \mathbb N^+} \chi_k$.  Therefore, from the standard eigenvalue perturbation theory in \cite{Ka}, (\ref{341}) always has a positive root for small $s$ if $k\neq k_0$.  This finishes the instability result \emph{(i)}.

We now proceed to study the stability of $(u_{k_0}(s,x),v_{k_0}(s,x))$.  According to the analysis above, we see that (\ref{341a}) with $k=k_0$ has one negative eigenvalue and a zero eigenvalue.  To show that $(u_{k_0}(s,x),v_{k_0}(s,x))$ is stable, we need to show that both eigenvalues of (\ref{341}) are negative (have a negative real part) for $s\neq 0$.  According to Corollary 1.13 in \cite{CR2}, there exist an interval $I$ with $\chi_{k_0}\in I$ and $C^1$-smooth functions $(\chi,s):I\times (-\delta,\delta) \rightarrow (\mu(\chi),\lambda(s))$ with $\lambda(0)=0$ and $\mu(\chi_{k_0})=0$ such that, $\lambda(s)$ is an eigenvalue of (\ref{341}) and $\mu(\chi)$ is an eigenvalue of the following eigenvalue problem
\begin{equation}\label{342}
D_{(u,v)}\mathcal{F}(\bar{u},\bar{v},\chi)(u,v)=\mu(u,v),~(u,v)\in \mathcal{X} \times \mathcal{X}.
\end{equation}
Moreover, $\lambda(s)$ is the only eigenvalue of (\ref{341}) in any fixed neighbourhood of the origin of the complex plane and the same assertion can be made about $\mu(\chi)$.  We also know from \cite{CR2} that the eigenfunction of (\ref{342}) can be represented by $\big(u(\chi,x),v(\chi,x)\big)$, which depends on $\chi$ smoothly and is uniquely determined by $\big(u(\chi_{k_0},x),v(\chi_{k_0},x)\big)=\big( Q_{k_0} \cos \frac{k_0\pi x}{L},\cos \frac{k_0\pi x}{L} \big)$ and $\big(u(\chi,x),v(\chi,x)\big)$ $-\break\big( Q_{k_0}\cos \frac{k_0\pi x}{L},\cos \frac{k_0\pi x}{L} \big) \in \mathcal{Z}$, where $Q_{k_0}$ and $\mathcal{Z}$ are defined (\ref{313}) and (\ref{316}) respectively.
We have from (\ref{39}) that (\ref{342}) is equivalent to
\begin{equation}\label{343}
\left\{
\begin{array}{ll}
D_1 u''+\chi \phi(\bar{v})\bar{u}v''-b_1\bar{u}u-c_1\bar{u}v=\mu u,& x\in(0,L),  \\
D_2 v''-b_2\bar{v}u-c_2\bar{v}v=\mu v,& x\in(0,L),  \\
u'(x)=v'(x)=0,&x=0,L.
\end{array}
\right.
\end{equation}
Differentiating (\ref{343}) with respect to $\chi$ and then taking $\chi=\chi_{k_0}$, we have
\begin{equation}\label{344}
\left\{\!\!\!
\begin{array}{ll}
D_1\dot{u}''\!\!-\!\phi(\bar{v})\bar{u}\Big(\cos\frac{k_0\pi x}{L}\Big)''\!\!+\chi_{k_0}\phi(\bar{v})\bar{u}\dot{v}''\!\!-\!b_1\bar{u}\dot{u}\!-\!c_1\bar{u}\dot{v}=\dot{\mu}(\chi_{k_0}) Q_{k_0} \cos\frac{k_0\pi x}{L},  \\
D_2 \dot{v}''\!\!-b_2\bar{v}\dot{u}-c_2\bar{v}\dot{v}=\dot{\mu}(\chi_{k_0}) \cos\frac{k_0\pi x}{L},\\
\dot{u}'(x)=\dot{v}'(x)=0,~x=0,L,
\end{array}
\right.
\end{equation}
where the dot-notation $\dot{}$ in (\ref{344}) denotes the differentiation with respect to $\chi$ evaluated at $\chi=\chi_{k_0}$ and in particular $\dot{u}=\frac{\partial u(\chi,x)}{\partial \chi}\big\vert_{\chi=\chi_{k_0}}$, $\dot{v}=\frac{\partial v(\chi,x)}{\partial \chi}\big\vert_{\chi=\chi_{k_0}}$.

Multiplying both equations in (\ref{344}) by $\cos\frac{k \pi x}{L}$ and integrating them over $(0,L)$ by parts, we arrive at the following system
\[\begin{pmatrix}\!\!
-D_1 \!\big(\frac{k \pi}{L}\big)^2\!\!-\!b_1\bar{u} & -\chi_{k_0} \big(\frac{k \pi}{L}\big)^2 \!\phi(\bar{u},\bar{v})\!-\!c_1\bar{u}   \\
-b_2\bar{v} & -D_2\big(\frac{k \pi}{L}\big)^2\!\!-\!c_2\bar{v}
\!\!\end{pmatrix}\!
\!\!\begin{pmatrix}\!
\int_0^L \!\!\dot{u}\cos\frac{k_0\pi x}{L}dx \\
~~\\
\int_0^L\!\! \dot{v}\cos\frac{k_0\pi x}{L}dx
\end{pmatrix}\]\[ \!\!=\!\!\begin{pmatrix}
\!\Big(\dot{\mu}(\chi_{k_0})Q_{k_0}\!-\!\phi(\bar{v})\bar{u}\big(\frac{k_0\pi }{L} \big)^2\Big)\! \frac{L}{2}\\
~~\\
\dot{\mu}(\chi_{k_0})\frac{L}{2}
\!\!\!\!\end{pmatrix}
\]
We see from (\ref{311}) that the coefficient matrix is singular, therefore if the algebraic system is solvable, we must have that $\frac{D_1 (\frac{k_0 \pi}{L})^2+b_1\bar{u}}{b_2\bar{v}}=\frac{\dot{\mu}(\chi_{k_0})Q_{k_0}-\phi(\bar{v})\bar{u} (\frac{k_0\pi }{L})^2}{\dot{\mu}(\chi_{k_0})}$, which together with (\ref{313}), implies that
\[\dot{\mu}(\chi_{k_0})=-\frac{b_2\phi(\bar{v})\bar{u}\bar{v}\big(\frac{k_0\pi }{L} \big)^2}{(D_1+D_2)\big(\frac{k_0\pi }{L} \big)^2+b_1\bar{u}+c_2\bar{v}}<0.\]
By Theorem 1.16 in \cite{CR2}, the functions $\lambda(s)$ and $-s\chi'_{k_0}(s)\dot{\mu}(\chi_{k_0})$ have the same zeros and the same sign near $s=0$, and for $\lambda(s) \neq0$,
\[\lim_{s\rightarrow 0}\frac{-s\chi'_{k_0}(s)\dot{\mu}(\chi_{k_0})}{\lambda(s)}=1,\]
therefore, this shows that $\text{sgn} (\lambda(s))=\text{sgn}(K_2)$ in light of $K_1=0$.

Thanks to Proposition \ref{prop2} and the analysis that leads to part \emph{(i)}, the instability statements follow right away from the positive sign of $\lambda(s)$.  To show the stability part, we first observe that, following the same calculations that lead to (\ref{316}), as $s\rightarrow 0$, (\ref{342}) has no nonzero eigenvalues with nonpositive real parts if $K_2>0$.  Then it follows from the standard perturbation theory that all eigenvalues of (\ref{341}) have no positive real part in a small neighborhood of the origin of the complex plane.  This completes the proof of Theorem \ref{thm32}.
\end{proof}

\begin{figure}[!htb]
\minipage{.4\textwidth}
  \includegraphics[width=1.2in]{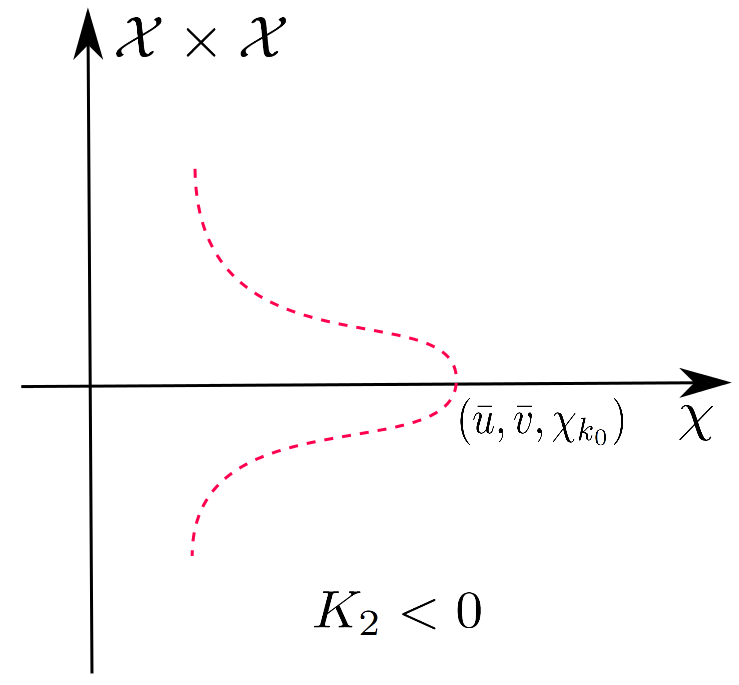}
\endminipage
\minipage{.4\textwidth}
  \includegraphics[width=1.2in]{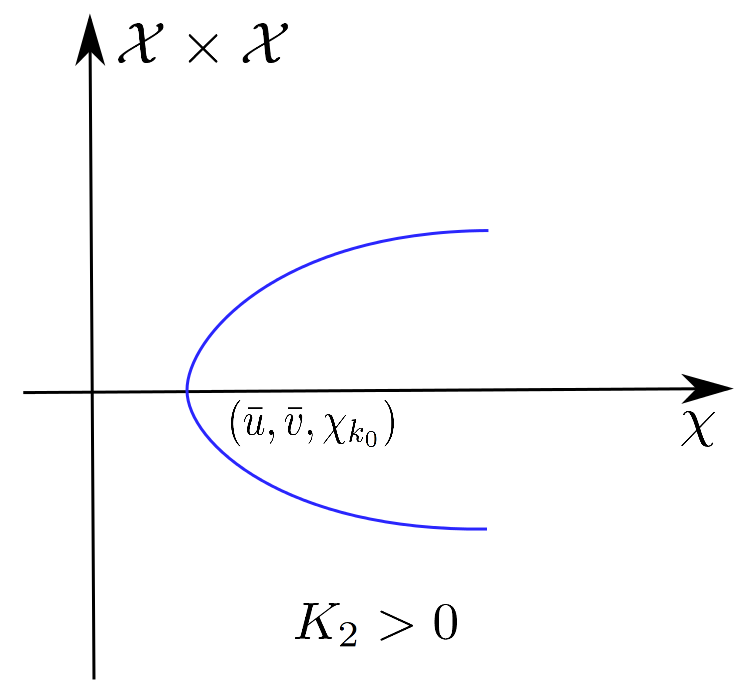}
\endminipage
\caption{Pitchfork bifurcations are illustrated.  The solid line represents stable bifurcating solutions $(u_{k_0}(s,x),v_{k_0}(s),\chi_{k_0}(s))$ and the dashed line represents unstable solutions $(u_{k_0}(s,x),v_{k_0}(s,x),\chi_{k_0}(s))$.}\label{fig1}
\end{figure}

Theorem \ref{thm32} gives a wave mode selection mechanism of system (\ref{11}).  If the spatial pattern $(u_k(s,x),v_k(s,x))$ is stable, then $k$ must be the integer that minimizes the bifurcation value $\chi_k$, that is, if a bifurcation branch is stable, then it must be the first branch counting from the left to the right.  Given initial data being small perturbations from $(\bar u,\bar v)$, spatially inhomogeneous patterns can emerge through this mode and finally develop into stable patterns with interesting structures, such as interior spikes, transition layers, etc.  However, rigourous analysis to this end is out of the scope of our paper.

If the interval length $L$ is small, we see that
\[\chi_k=\frac{\big( D_1(\frac{k\pi}{L})^2+b_1\bar{u} \big)\big(D_2(\frac{k\pi}{L})^2+c_2\bar{v} \big)-b_2c_1\bar{u}\bar{v} }{b_2(\frac{k\pi}{L})^2\phi(\bar{v})\bar{u}\bar{v}}\approx \frac{D_1D_2(\frac{k\pi}{L})^2}{b_2\phi(\bar v)\bar u\bar v}\]
and $\chi_1=\min_{k\in \mathbb N^+} \chi_k$, then Theorem \ref{thm32} indicates that the monotone solution $(u_1(s,x),v_1(s,x))$ is the only stable pattern.  Actually, we see that, if $L$ increases, then the minimizer of $\chi_k$, denoted by $k_0$, also increases.  This indicates that small domain only supports monotone stable solutions, while large domain supports non-monotone stable solutions.  Actually, given a monotone solution of (\ref{35}), one can construct its non-monotone steady states by reflection and periodic extension of the monotone solution at the boundary points $-2L,-L,0,L,2L,...$

From the view point of biology, the perceived intensity of a stimulus should have a saturation effect on the strength of stimulus.  This can be modeled mathematically by choosing $\phi''<0$ in (\ref{35}).  According to Proposition \ref{prop3} and Theorem \ref{thm32}, if $\min \{D_1,\frac{1}{D_2}\}$ is large, in the strong competition case $\frac{b_1}{b_2}<\frac{a_1}{a_2}<\frac{c_1}{c_2}$, the small-amplitude solutions $(u_k(s,x),v_k(s,x))$ are unstable independent of the size of $D_1D_2$.  Moreover, the advection rate $\chi$ tends to destabilize the constant solution $(\bar u,\bar v)$ as we have seen in Proposition \ref{prop1}, therefore, we are motivated to study the solutions to (\ref{35}) that have large amplitudes as $\chi$ increases, with $\min \{D_1,\frac{1}{D_2}\}$ being large.  In the weak competition case, we see that the bifurcating solutions $(u_k(s,x),v_k(s,x))$ are unstable for all $k$ large.

\section{Effect of large advection rate $\chi$}\label{section4}
This section is devoted to study the asymptotic behaviors of positive solutions $(u,v)$ to (\ref{35}) if the advection rate $\chi$ is large.  The first step of our analysis is to present the following a prior estimates.
\begin{lemma}\label{lem41}
Assume that $\phi(v)\in C^2(\mathbb{R},\mathbb{R})$.
Let $(u,v)$ be any positive solution of (\ref{35}).  Then we have that
\begin{equation}\label{41}
\max_{x\in[0,L]} v(x)\leq\frac{a_2}{c_2},
\end{equation}
and
\begin{equation}\label{42}
\int_0^Lu^2dx \leq \frac{a_1^2L}{b_1^2}.
\end{equation}
\end{lemma}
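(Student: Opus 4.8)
The plan is to prove the two estimates separately, each by an elementary argument: (\ref{41}) follows from the maximum principle applied to the $v$-equation, while (\ref{42}) follows by integrating the $u$-equation over $(0,L)$ and invoking the Cauchy--Schwarz inequality.

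First I would establish (\ref{41}). Let $x_0\in[0,L]$ be a point where $v$ attains its maximum over $[0,L]$. If $x_0\in(0,L)$ then $v'(x_0)=0$ and $v''(x_0)\le0$; if $x_0\in\{0,L\}$ the Neumann condition in (\ref{35}) already gives $v'(x_0)=0$, and a second-order Taylor expansion of $v$ at $x_0$ (or Hopf's boundary point lemma) shows $v''(x_0)\le0$ as well. Evaluating the second equation of (\ref{35}) at $x_0$ then yields
\[
\big(a_2-b_2u(x_0)-c_2v(x_0)\big)v(x_0)=-D_2v''(x_0)\ge0 .
\]
Since the solution is positive we have $v(x_0)>0$ and $u(x_0)\ge0$, and $b_2\ge0$, so $a_2-c_2v(x_0)\ge a_2-b_2u(x_0)-c_2v(x_0)\ge0$, which is precisely (\ref{41}).

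Next I would prove (\ref{42}). Integrating the first equation of (\ref{35}) over $(0,L)$, the divergence term contributes the boundary flux $\big[D_1u'+\chi u\phi(v)v'\big]_0^L$, which vanishes because $u'=v'=0$ at $x=0,L$. Hence
\[
\int_0^L\big(a_1-b_1u-c_1v\big)u\,dx=0 .
\]
Since $c_1\ge0$ and $u,v>0$, discarding the nonpositive term $-c_1uv$ gives $a_1\int_0^Lu\,dx\ge b_1\int_0^Lu^2\,dx$. By the Cauchy--Schwarz inequality $\int_0^Lu\,dx\le\sqrt{L}\,\big(\int_0^Lu^2\,dx\big)^{1/2}$, so
\[
b_1\int_0^Lu^2\,dx\le a_1\sqrt{L}\,\Big(\int_0^Lu^2\,dx\Big)^{1/2},
\]
and dividing by $\big(\int_0^Lu^2\,dx\big)^{1/2}>0$ and squaring yields (\ref{42}).

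There is no genuine obstacle here; both arguments are standard. The only point needing slight care is the case where the maximum of $v$ is attained on $\partial(0,L)$, which is dispatched by the Neumann condition together with the sign of $v''$ at that point. Note also that the standing assumption (\ref{12}) forces $b_1>0$ and $c_2>0$, so the bounds $a_2/c_2$ and $a_1^2L/b_1^2$ are finite.
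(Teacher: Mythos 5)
Your proof is correct and follows essentially the same route as the paper: (\ref{41}) by the maximum principle (which you spell out, including the boundary case the paper leaves implicit), and (\ref{42}) by integrating the $u$-equation, discarding the nonnegative cross term $c_1\int_0^L uv\,dx$, and comparing $\int_0^L u\,dx$ with $\int_0^L u^2\,dx$. The only cosmetic difference is that you use Cauchy--Schwarz where the paper uses Young's inequality; both yield the identical bound $a_1^2L/b_1^2$.
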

\begin{proof}
First of all, we see that (\ref{41}) follows from the Maximum Principles.  We integrate the $u$-equation in (\ref{35}) over $(0,L)$ and obtain
\[\int_0^L(a_1-b_1u-c_1v)udx=0.\]
Then we have from the Young's inequality that
\begin{equation*}\label{43}
b_1\int_0^L u^2 dx+c_1 \int_0^Luvdx=a_1\int_0^Ludx \leq \frac{b_1}{2}\int_0^L u ^2dx+\frac{a_1^2L}{2b_1}.
\end{equation*}
This implies (\ref{42}) and it completes the proof of Lemma \ref{lem41}.
\end{proof}
We now study the asymptotic behaviors of $(u,v)$ by passing the advection rate $\chi$ to infinity in (\ref{35}).  It seems that one needs the largeness of $D_1$ in order to establish nontrivial patterns.  On the other hand, we have assumed that the diffusion rate $D_2$ is sufficiently small for our stability analysis in Section 3, and here we treat $D_1$ and $D_2$ separately and then as $\chi\rightarrow \infty$, we have the following results.

\begin{theorem}\label{thm42}
Suppose that $\phi\in C^2(\mathbb{R},\mathbb{R})$.  Let $(u_i,v_i)$ be positive solutions of (\ref{35}) with $(D_{1,i},D_{2,i},\chi_i)=(D_1,D_2,\chi)$ and $\frac{\chi_i}{D_{1,i}}=r_i$.  Assume that $\chi_{i} \rightarrow \infty$, $D_{2,i}\rightarrow D_2 \in(0,\infty)$ and $r_i \rightarrow r\in(0,\infty)$ as $i\rightarrow \infty$, then there exists a nonnegative constant $\lambda_\infty$ such that
\[u_ie^{r_i\Phi(v_i)} \rightarrow \lambda_\infty \text{~uniformly on~}[0,L];\] moreover, after passing to a subsequence if necessary as $i \rightarrow \infty$
\[(u_i,v_i)\rightarrow (\lambda_\infty  e^{-r\Phi(v_\infty)},v_\infty) \text{ in }C^1([0,L])\times C^1([0,L]),\]
where $v_\infty=v_\infty(x)$ satisfies the following shadow system
\begin{equation}\label{44}
\left\{
\begin{array}{ll}
D_2 v''_\infty+(a_2-b_2 \lambda_\infty e^{-r\Phi(v_\infty)}-c_2 v_\infty)v_\infty=0, & x \in(0,L),\\
\int_0^L (a_1-b_1 \lambda_\infty e^{-r\Phi(v_\infty)}-c_1 v_\infty) e^{-r\Phi(v_\infty)} dx=0,\\
v_\infty'(0)=v_\infty'(L)=0.
\end{array}
\right.
\end{equation}
\end{theorem}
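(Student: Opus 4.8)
The plan is to reformulate the $u$-equation of (\ref{35}) in a gauge that stays uniformly non-degenerate as $\chi\to\infty$. With $\chi_i=r_iD_{1,i}$ and $\Phi'=\phi$, the $u$-flux factors as $D_{1,i}u_i'+\chi_iu_i\phi(v_i)v_i'=D_{1,i}e^{-r_i\Phi(v_i)}\big(u_ie^{r_i\Phi(v_i)}\big)'$, so setting $w_i:=u_ie^{r_i\Phi(v_i)}>0$ the first equation of (\ref{35}) becomes
\[
\big(e^{-r_i\Phi(v_i)}w_i'\big)'=-\tfrac{1}{D_{1,i}}(a_1-b_1u_i-c_1v_i)u_i ,\qquad w_i'(0)=w_i'(L)=0,
\]
where the boundary condition follows from the no-flux condition together with $v_i'(0)=v_i'(L)=0$.

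First I would record the $\chi$-independent a priori bounds. Lemma \ref{lem41} gives $0\le v_i\le a_2/c_2$ and $\|u_i\|_{L^2(0,L)}\le a_1\sqrt L/b_1$ uniformly in $i$, hence also a uniform $L^1$-bound on $u_i$. Since $D_{2,i}\to D_2>0$, the right-hand side $-(a_2-b_2u_i-c_2v_i)v_i/D_{2,i}$ of the $v$-equation is bounded in $L^2$, so $\{v_i\}$ is bounded in $H^2(0,L)$ and, after extracting a subsequence, $v_i\to v_\infty$ in $C^1([0,L])$. Because $v_i$ (hence $\Phi(v_i)$) and $r_i$ are bounded, integrating the displayed equation from $0$ using $w_i'(0)=0$ yields
\[
e^{-r_i\Phi(v_i(x))}w_i'(x)=-\tfrac{1}{D_{1,i}}\int_0^x(a_1-b_1u_i-c_1v_i)u_i\,dy ,
\]
and the uniform $L^1$- and $L^2$-bounds on $u_i$ give $\|w_i'\|_{L^\infty}\le C/D_{1,i}=Cr_i/\chi_i\to0$. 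Consequently the oscillation of $w_i$ over $[0,L]$ tends to $0$; and since $\bar w_i:=\frac1L\int_0^Lw_i=\frac1L\int_0^Lu_ie^{r_i\Phi(v_i)}$ is bounded, a further subsequence gives $\bar w_i\to\lambda_\infty\ge0$, whence $w_i\to\lambda_\infty$ uniformly on $[0,L]$.

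From here the conclusions follow. Since $v_i\to v_\infty$ in $C^1$ and $r_i\to r$, we get $u_i=w_ie^{-r_i\Phi(v_i)}\to\lambda_\infty e^{-r\Phi(v_\infty)}$ uniformly; differentiating $u_i=w_ie^{-r_i\Phi(v_i)}$ and using $w_i'\to0$, $w_i\to\lambda_\infty$ and $v_i'\to v_\infty'$ uniformly upgrades this to convergence in $C^1([0,L])$. Now $\{u_i\}$ is uniformly bounded, so the $v$-equation shows $\{v_i\}$ is bounded in $W^{2,p}(0,L)$ for every $p$, and passing to the limit gives $D_2v_\infty''+(a_2-b_2\lambda_\infty e^{-r\Phi(v_\infty)}-c_2v_\infty)v_\infty=0$ with $v_\infty'(0)=v_\infty'(L)=0$, the first line of (\ref{44}). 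Integrating the $u$-equation over $(0,L)$ (the flux terms vanish by the boundary conditions) gives $\int_0^L(a_1-b_1u_i-c_1v_i)w_ie^{-r_i\Phi(v_i)}\,dx=0$, and letting $i\to\infty$ in this identity, whose integrand converges uniformly, produces $\lambda_\infty\int_0^L(a_1-b_1\lambda_\infty e^{-r\Phi(v_\infty)}-c_1v_\infty)e^{-r\Phi(v_\infty)}\,dx=0$, which is the integral constraint of (\ref{44}) after dividing by $\lambda_\infty$.

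The main obstacle, and the one place requiring care, is precisely this last division: one must rule out the degenerate case $\lambda_\infty=0$ (equivalently $u_i\to0$ uniformly). Under the weak-competition half of (\ref{12}) this is immediate, since $a_1\int_0^Lu_i\,dx=\int_0^L(b_1u_i+c_1v_i)u_i\,dx\le(b_1\|u_i\|_{L^\infty}+c_1a_2/c_2)\int_0^Lu_i\,dx$ forces $\|u_i\|_{L^\infty}\ge(a_1-c_1a_2/c_2)/b_1>0$, so $\lambda_\infty>0$; in general one normalizes the integral identity by $\bar w_i$ and argues along the chosen subsequence. A secondary point is that every a priori estimate used above must be uniform in $\chi$ and $D_{1,i}$, which holds because Lemma \ref{lem41} and the $L^\infty$-bound on $v$ depend only on $a_i,b_i,c_i,L$, while $D_{1,i}\to\infty$ enters only through the favourable factor $1/D_{1,i}$ that annihilates $w_i'$.
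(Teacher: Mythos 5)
Your argument is correct and follows essentially the same route as the paper's proof: the same a priori bounds from Lemma \ref{lem41}, the same substitution $w_i=u_ie^{r_i\Phi(v_i)}$ with the estimate $\Vert w_i'\Vert_{L^\infty}\leq C/D_{1,i}\rightarrow 0$ forcing $w_i$ to a constant, and the same $H^2$-compactness for $v_i$. The only additions are that you make explicit two points the paper glosses over — the passage to the limit in the integrated $u$-equation to obtain the integral constraint, and the need to rule out $\lambda_\infty=0$ before dividing by it (which you settle cleanly in the weak-competition case) — a welcome refinement of detail rather than a different method.
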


\begin{proof} We see from (\ref{42}) and the $v$-equation in (\ref{35}) that, $\Vert v_i \Vert_{H^2(0,L)}$ is uniformly bounded for all $\chi_i$ and $D_{1,i}$.  By Sobolev embedding theorem, we can show that $v_i \rightarrow v_\infty$ in $C^1([0,L])$ as $i\rightarrow \infty$, after passing to a subsequence if necessary.
Dividing the $u$-equation in (\ref{35}) by $D_{1,i}$ and then integrating it over $(0,x)$, we obtain that
\[u_i'+r_i u_i\Phi'(v_i)=-\frac{1}{D_{1,i}} \int_0^x (a_1-b_1u_i-c_1v_i)u_idx.\]
Denoting $w_i=u_i e^{r_i\Phi(v_i)}$, we can easily show from Young's inequality that
\begin{equation}\label{45}
\Big \vert e^{-r_i\Phi(v_i)}w_i' \Big \vert\leq \frac{1}{D_{1,i}} \int_0^x \Big \vert(a_1-b_1u_i-c_1v_i)u_i\Big \vert dx \leq \frac{1}{D_{1,i}} \int_0^L 2b_1u_i^2+M dx,
\end{equation}
where $M$ is a positive constant independent of $D_{1,i}$.  Sending $i$ to $\infty$ in (\ref{45}), we conclude from (\ref{41}), (\ref{42}) and the Neumann boundary conditions that $w'_{i} \rightarrow 0$ uniformly.  Therefore, we must have that $w_{i}=u_ie^{r \Phi(v_i)}$ converges to some nonnegative constant $\lambda_\infty$.  Moreover we can show from elliptic regularity theories that $v_\infty$ is smooth and it satisfies the shadow system (\ref{44}).
\end{proof}
\begin{remark}
If $r_i\rightarrow r=0$, we see that the convergence in Theorem \ref{thm42} still holds and $u_i$ converges to the constant $\lambda_\infty$.  Moreover, (\ref{44}) with $r=0$ has only trivial solution $v_\infty$, therefore $(u_i,v_i)$ can only be trivial if $\chi_i \rightarrow \infty$ with $D_{1,i}$ being relatively larger.  Same conclusions can be made if $r_i\rightarrow r=\infty$.  Hence we only consider the limiting process with the diffusion rate $D_1$ and the advection rate $\chi$ being comparably large.
\end{remark}
We want to point out that, if $b_1\neq0$, the boundedness of $\Vert u \Vert_{L^2}$ in (\ref{42}) is required to show the convergence in Theorem \ref{thm42}.  However, if $b_1=0$, the statements in Theorem \ref{thm42} still hold though $\Vert u \Vert_{L^2}$ may be unbounded.  Actually, if $b_1=0$, (\ref{35}) implies
\[\int_0^L udx =\frac{c_1}{a_1} \int_0^L uvdx.\]
Integrating the $v$-equation over $(0,L)$, we collect that
\begin{equation*}\label{46}
\int_0^L uv dx=\frac{1}{b_2}\int_0^L(a_2-c_2v)vdx,
\end{equation*}
which is uniformly bounded in $D_1$, therefore $\Vert u \Vert_{L^1}$ is uniformly bounded.  Taking $b_1=0$ in (\ref{35}), we can also show that $w_i$ converges to a constant as $D_{1,i}$ converges to infinity.

\subsection{Existence of nonconstant positive solutions to the shadow system}

Now we proceed to study the existence of nonconstant solutions of (\ref{44}).  One can easily show that (\ref{44}) has only trivial solution if $D_2$ is large.  On the other hand, since small diffusion rate $D_2$ tends to support $v_\infty$ that has an interior transition layer, we put $D_2=\epsilon$ and denote $(v_\infty,\lambda_\infty)=(v_\epsilon,\lambda_\epsilon)$, then (\ref{44}) becomes
\begin{equation}\label{47}
\left\{
\begin{array}{ll}
\epsilon v_\epsilon''+\Big(a_2-b_2 \lambda_\epsilon e^{-r\Phi(v_\epsilon )}-c_2 v_\epsilon\Big)v_\epsilon =0, ~ x \in(0,L),\\
\int_0^L (a_1-b_1 \lambda_\epsilon e^{-r\Phi(v_\epsilon)}-c_1 v_\epsilon) e^{-r\Phi(v_\epsilon)} dx=0,\\
v_\epsilon(x)>0,x\in(0,L);~v_\epsilon'(0)=v_\epsilon'(L)=0.
\end{array}
\right.
\end{equation}
Obviously (\ref{47}) has a unique positive trivial solution
\[(\bar v,\bar \lambda)=\Big(\frac{a_2b_1-a_1b_2}{b_1c_2-b_2c_1},\frac{a_2b_1-a_1b_2}{b_1c_2-b_2c_1} e^{-r\Phi(\bar v)} \Big),\]
provided with (\ref{12}).  First of all, we establish the existence of nonconstant positive solutions to (\ref{47}) through the bifurcation analysis by taking $\epsilon$ as the bifurcation parameter.  Similarly as the bifurcation analysis in Section 3, we rewrite (\ref{47}) into the following abstract form
\[\mathcal{T}(v,\lambda,\epsilon)=0,~(v,\lambda,\epsilon)\in \mathcal{X}\times \mathbb{R}^+ \times\mathbb{R}^+,\]
where $\mathcal{X}$ is defined in (\ref{36}).  Then $\mathcal{T}(\bar{v},\bar{\lambda},\epsilon)=0$ for any $\epsilon \in \mathbb{R}$ and $\mathcal{T}$ is analytic from $\mathcal{X} \times \mathbb{R}^+ \times \mathbb{R}^+$ to $\mathcal{Y} \times \mathbb{R}$, where $\mathcal{Y}=L^2(0,L)$;  moreover, it follows through straightforward calculations that, for any fixed $(v_0,\lambda_0) \in \mathcal{X} \times \mathbb{R}$, the Fr\'echet derivative of $\mathcal{T}$ is given by
\begin{equation}\label{48}
\begin{split}
&D_{(v,\lambda)}\mathcal{T}(v_0,\lambda_0,\epsilon)(v,\lambda)\\
 =&\left(
 \begin{array}{c}
\epsilon  v''\!+\!\big(a_2\!-\!2c_2v_0\!-\!b_2e^{-r\Phi(v_0)}(\lambda_0\!-\!\lambda_0v_0r\Phi'(v_0))\big)v\!-\!b_2e^{-r\Phi(v_0)}v_0\lambda \\
\int_0^L(b_1\lambda_0 r\Phi'(v_0)e^{-2r\Phi(v_0)}\!-\!c_1e^{-r\Phi(v_0)})v\!-\!b_1e^{-2r\Phi(v_0)}\lambda dx
 \end{array}\right).
 \end{split}
\end{equation}
Similar as the arguments for (\ref{38}), we can show that $D_{(v,\lambda)}\mathcal{T}(v_0,\lambda_0,\epsilon): \mathcal{X}  \times \mathbb{R}^+ \rightarrow \mathcal{Y} \times \mathbb{R}$ is a Fredholm operator with zero index.  For bifurcation to occur at $(\bar v,\bar \lambda)$, we need to check the following necessary condition,
\begin{equation}\label{49}
\mathcal{N}(D_{(v,\lambda)}\mathcal{T}(v,\lambda,\epsilon))\not =\{0\},
\end{equation}
where $\mathcal{N}$ denotes the null space.  First, we claim that if $(v,\lambda)\in \mathcal{N}(D_{(v,\lambda)}\mathcal{T}(\bar{v},\bar{\lambda},\epsilon))$, then $\lambda=0$.  In fact, assume that $(v,\lambda)$ satisfies the following system
\begin{equation}\label{410}
\left\{
\begin{array}{ll}
\epsilon v''+\big((a_2-c_2\bar{v})\bar v r \Phi'(\bar v)-c_2\bar v\big)v-b_2\bar v e^{-r\Phi(\bar v)}\lambda=0,~x \in (0,L),\\
\int_0^L\big(-c_1+(a_1-c_1\bar v)r\Phi'(\bar v)\big)v-e^{-r\Phi(\bar v)}b_1\lambda dx=0,\\
v'(0)=v'(L)=0.
\end{array}
\right.
\end{equation}
Integrating the first equation in (\ref{410}) over $(0,L)$, we have
\[((a_2-c_2\bar{v})r\Phi'(\bar v)-c_2)\int_0^Lvdx=b_2\lambda e^{-r\Phi(\bar v)}L.\]
The second equation in (\ref{410}) implies that
\[((a_1-c_1\bar v)r\Phi'(\bar v)-c_1)\int_0^Lvdx=b_1\lambda e^{-r\Phi(\bar v)}L.\]
If $\lambda \neq 0$, we equate the coefficients of the two equations above to obtain
\[\big((a_2-c_2\bar{v})r\Phi'(\bar v)-c_2\big)b_1-\big((a_1-c_1\bar{v})r\Phi'(\bar v)-c_1\big)b_2=0,\]
which implies through direct calculation that
\[\bar{v}=\frac{a_2b_1-a_1b_2}{b_1c_2-b_2c_1}-\frac{1}{r\Phi'(\bar v)},\]
however, this contradicts to the fact that
\[\bar v=\frac{a_2b_1-a_1b_2}{b_1c_2-b_2c_1},\]
therefore $\lambda=0$ as claimed and (\ref{410}) becomes
\begin{equation}\label{411}
\left\{
\begin{array}{ll}
\epsilon  v''+\big((a_2-c_2\bar{v})r\Phi'(\bar v)\bar v-c_2\bar v\big)v=0,~x \in (0,L),\\
\int_0^L \big((a_1-c_1\bar{v})r\Phi'(\bar v)-c_1\big)v dx=0,\\
~v'(0)=v'(L)=0.
\end{array}
\right.
\end{equation}
It is easy to see that (\ref{411}) has nonzero solutions if and only if
\begin{equation}\label{412}
\frac{(a_2-c_2\bar{v})r\Phi'(\bar v)-c_2}{\epsilon}\bar v=\big(\frac{n \pi}{L}\big)^2,~ n\in \mathbb N^+.
\end{equation}
First of all, $n=0$ can be easily ruled out in (\ref{412}) since $\frac{a_1}{a_2}\neq \frac{c_1}{c_2}$.  Otherwise, we must have that $v$ is a constant from the first equation in (\ref{411}) and $v\equiv0$ thanks to the integral constraint, which contradicts the condition (\ref{49}).  For $n\neq0$, the local bifurcation might occur at $(\bar{v},\bar{\lambda},\epsilon_n)$ with
\begin{equation}\label{413}
\epsilon_n=\frac{\big((a_2-c_2\bar{v})r\Phi'(\bar v)-c_2\big)\bar v}{(n\pi/L)^2}>0,~ n \in \mathbb N^+,
\end{equation}
provided that $(a_2-c_2\bar v)r\Phi'(\bar v)>c_2>0$.  Furthermore the null space
\[\mathcal{N}(D_{(v,\lambda)}\mathcal{T}(\bar{v},\bar{\lambda},\epsilon_n))=\text{span}\big\lbrace ( \cos \frac{n\pi x}{L}, 0)  \big\rbrace,~n\in \mathbb N^+,\]
and it has $\dim \mathcal{N}(D_{(v,\lambda)}\mathcal{T}(\bar{v},\bar{\lambda},\epsilon_n))=1.$

Having the potential bifurcation values, we can now proceed to verify in the following theorem that the local bifurcation does occur at $(\bar{v},\bar{\lambda},\epsilon_n)$.
\begin{theorem}\label{thm43}
Suppose that the conditions (\ref{12}) and $c_2<(a_2-c_2\bar v)r\Phi'(\bar v)$ are satisfied.  For each $n\in \mathbb N^+$, there exist $\delta>0$ and continuous functions $s\in(-\delta, \delta):\rightarrow (v_n(x,s),\lambda_n(s),\epsilon_n(s)) \in \mathcal{X} \times \mathbb{R}^+ \times \mathbb{R}^+$, with
\begin{equation}\label{414}
\epsilon_n(0)=\epsilon_n,~(v_n(x,s),\lambda_n(s))=(\bar{v},\bar{\lambda})+s(\cos \frac{n\pi x}{L},0) +o(s),
\end{equation}
such that $(v_n(x,s),\lambda_n(s))$ solves the system (\ref{44}).  Moreover, all nontrivial solutions of (\ref{44}) near ($\bar{v},\bar{\lambda},\epsilon_n)$ take the form in (\ref{414}).
\end{theorem}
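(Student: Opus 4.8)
The plan is to verify, at each candidate point $(\bar v,\bar\lambda,\epsilon_n)$ with $n\in\mathbb N^+$, the hypotheses of the local bifurcation theorem of Crandall and Rabinowitz \cite{CR}. Most of the required structure has already been assembled in the discussion preceding the theorem: $\mathcal T(\bar v,\bar\lambda,\epsilon)=0$ for every $\epsilon$; $\mathcal T$ is analytic from $\mathcal X\times\mathbb R^+\times\mathbb R^+$ into $\mathcal Y\times\mathbb R$; $D_{(v,\lambda)}\mathcal T(\bar v,\bar\lambda,\epsilon_n)$ is a Fredholm operator of index zero; and, when $c_2<(a_2-c_2\bar v)r\Phi'(\bar v)$, the value $\epsilon_n$ in (\ref{413}) is positive with $\mathcal N\big(D_{(v,\lambda)}\mathcal T(\bar v,\bar\lambda,\epsilon_n)\big)=\mathrm{span}\{(\cos\frac{n\pi x}{L},0)\}$ one-dimensional, so that, the index being zero, $\mathrm{codim}\,\mathcal R\big(D_{(v,\lambda)}\mathcal T(\bar v,\bar\lambda,\epsilon_n)\big)=1$ as well. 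The only remaining ingredient is the \emph{transversality condition}
\[
\frac{\partial}{\partial\epsilon}\Big(D_{(v,\lambda)}\mathcal T(\bar v,\bar\lambda,\epsilon)\Big)\Big(\cos\tfrac{n\pi x}{L},0\Big)\Big|_{\epsilon=\epsilon_n}\notin\mathcal R\big(D_{(v,\lambda)}\mathcal T(\bar v,\bar\lambda,\epsilon_n)\big).
\]

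To check this, observe from (\ref{48}) that only the $\epsilon v''$ entry of $D_{(v,\lambda)}\mathcal T$ depends on $\epsilon$, so $\frac{\partial}{\partial\epsilon}D_{(v,\lambda)}\mathcal T(\bar v,\bar\lambda,\epsilon)(v,\lambda)=(v'',0)$, and evaluating at the kernel generator produces the element $\big(-(\frac{n\pi}{L})^2\cos\frac{n\pi x}{L},\,0\big)\in\mathcal Y\times\mathbb R$. Suppose, towards a contradiction, that this lies in the range; then there is $(\tilde v,\tilde\lambda)\in\mathcal X\times\mathbb R$ whose image under $D_{(v,\lambda)}\mathcal T(\bar v,\bar\lambda,\epsilon_n)$ equals it, and reading off the first component from (\ref{48}) gives
\[
\epsilon_n\tilde v''+\big((a_2-c_2\bar v)\bar v r\Phi'(\bar v)-c_2\bar v\big)\tilde v-b_2\bar v e^{-r\Phi(\bar v)}\tilde\lambda=-\Big(\frac{n\pi}{L}\Big)^2\cos\frac{n\pi x}{L}.
\]
Multiplying by $\cos\frac{n\pi x}{L}$, integrating over $(0,L)$, and using the Neumann conditions $\tilde v'(0)=\tilde v'(L)=0$ to turn $\int_0^L\tilde v''\cos\frac{n\pi x}{L}$ into $-(\frac{n\pi}{L})^2\int_0^L\tilde v\cos\frac{n\pi x}{L}$, while the constant $\tilde\lambda$ drops out because $\int_0^L\cos\frac{n\pi x}{L}\,dx=0$ for $n\geq1$, I obtain on the left-hand side the factor $-\epsilon_n(\frac{n\pi}{L})^2+(a_2-c_2\bar v)\bar v r\Phi'(\bar v)-c_2\bar v$ multiplied by $\int_0^L\tilde v\cos\frac{n\pi x}{L}\,dx$. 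By the defining relation (\ref{412})--(\ref{413}) of $\epsilon_n$ this factor is exactly $0$, so the left-hand side vanishes identically, whereas the right-hand side equals $-(\frac{n\pi}{L})^2\cdot\frac L2\neq0$; this contradiction establishes the transversality condition. The second, scalar equation of the linearized system is irrelevant to this argument: it only fixes $\tilde\lambda$ once $\tilde v$ is known and cannot affect the $n$-th Fourier mode in which the resonance occurs.

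Granting the transversality condition, the Crandall--Rabinowitz theorem applies and yields, for each $n\in\mathbb N^+$, a $\delta>0$ together with continuous curves $s\mapsto(v_n(x,s),\lambda_n(s),\epsilon_n(s))$ issuing from $(\bar v,\bar\lambda,\epsilon_n)$ with the expansion (\ref{414}), all of whose points solve the shadow system, and moreover every nontrivial solution of the shadow system in a neighbourhood of $(\bar v,\bar\lambda,\epsilon_n)$ lies on this curve. I do not expect a genuine difficulty here; the one point requiring care is formulating ``membership in the range'' correctly in the mixed function-plus-scalar setting $\mathcal Y\times\mathbb R$, and the device of projecting the first component onto $\cos\frac{n\pi x}{L}$ --- where the entire obstruction lives --- disposes of it cleanly.
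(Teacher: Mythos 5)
Your proposal is correct and follows essentially the same route as the paper: both verify the Crandall--Rabinowitz transversality condition by assuming the derivative lies in the range and testing the first component against $\cos\frac{n\pi x}{L}$, where the coefficient $-\epsilon_n(\frac{n\pi}{L})^2+((a_2-c_2\bar v)r\Phi'(\bar v)-c_2)\bar v$ vanishes by the definition of $\epsilon_n$ while the right-hand side integrates to $-(\frac{n\pi}{L})^2\frac{L}{2}\neq 0$. The only (harmless) difference is that the paper first argues $\lambda=0$ via the claim under (\ref{49}) and then invokes the Fredholm alternative for the reduced scalar problem (\ref{417}), whereas you bypass that step by noting the constant $\tilde\lambda$-term is annihilated by integration against $\cos\frac{n\pi x}{L}$ --- a slight streamlining of the same computation.
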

\begin{proof}
To make use of the local bifurcation analysis of Crandall and Rabinowitz \cite{CR}, we only need to show the transversality condition
\begin{equation}\label{415}
\frac{d}{d \epsilon} \left(D_{(v,\lambda)}\mathcal{T}(\bar{v},\bar{\lambda},\epsilon)\right)(v_n,\lambda_n)\Big\vert_{\epsilon=\epsilon_n} \notin \mathcal{R}(D_{(v,\lambda)}\mathcal{T}(\bar{v},\bar{\lambda},\epsilon_n)),
\end{equation}
where
\[\frac{d}{d \epsilon} \left(D_{(v,\lambda)}\mathcal{T}(\bar{v},\bar{\lambda},\epsilon)\right)(v_n,\lambda_n)\Big\vert_{\epsilon=\epsilon_n}=\left(
 \begin{array}{c}
(\cos\frac{n\pi x}{L})''\\
0 \end{array}
 \right). \]
If not and we suppose that there exists a nontrivial solution $v \in \mathcal{X}$ to the following problem
\begin{equation}\label{416}
\left\{
\begin{array}{ll}
\epsilon v''+\big((a_2-c_2\bar{v})\bar v r \Phi'(\bar v)-c_2\bar v\big)v=(\cos\frac{n\pi x}{L})'',x\in(0,L),\\
\int_0^L(-c_1+(a_1-c_1\bar v)r\Phi'(\bar v))v-e^{-r\Phi(v_0)}b_1\lambda dx=0,\\
v'(0)=v'(L)=0.
\end{array}
\right.
\end{equation}
By the same analysis that leads to the claim under (\ref{49}), we have that $\lambda=0$ in (\ref{416}), and it becomes
\begin{equation}\label{417}
\left\{
\begin{array}{ll}
\epsilon v''+\big((a_2-c_2\bar{v}) r \Phi'(\bar v)-c_2 \big)\bar vv=(\cos\frac{n\pi x}{L})'',~~x\in(0,L),\\
v'(0)=v'(L)=0.
\end{array}
\right.
\end{equation}
However, this reaches a contradiction to the Fredholm Alternative since $\cos\frac{n\pi x}{L}$ is in the kernel of the operator on the left hand side of (\ref{417}).  Then we have verified (\ref{415}) and thus conclude the proof of Theorem \ref{thm43}.
\end{proof}
According to Theorem \ref{thm43}, shadow system (\ref{47}) always admits positive solutions as long as $\epsilon$ is small.
\subsection{Stability of bifurcating solutions from $(\bar{v},\bar{\lambda},\epsilon_n)$}

In this section, we proceed to investigate the stability or instability of the spatially inhomogeneous solution $(v_n(s,x),\lambda_n(s,x))$ that bifurcates from $(\bar{v},\bar{\lambda})$ at $\epsilon=\epsilon_n$. Here again the stability refers to that of the bifurcating solution taken as an equilibrium to the time-dependent system of (\ref{44}).  Similar as the analysis in Section 3, we write the following expansions
\begin{equation}\label{418}
\left\{
\begin{array}{ll}
v_n(s,x)=\bar{v}+s\cos \frac{n \pi  x}{L}+s^2\varphi_2(x)+s^3\varphi_3(x)+o(s^3),\\
\lambda_n(s,x)=\bar{\lambda}+s^2\bar{\lambda}_2+s^3\bar{\lambda}_3+o(s^3),\\
\epsilon_n(s)=\epsilon_n+s\mathcal{K}_1+s^2\mathcal{K}_2+o(s^2),
\end{array}
\right.
\end{equation}
where $\varphi_i \in \mathcal{X}$ satisfies $\int_0^L \varphi_i\cos \frac{n\pi x}{L} dx=0$ for $i=2,3$ and $\bar{\lambda}_2, \mathcal{K}_1, \mathcal{K}_2$ are positive constants to be determined.

For notational simplicity, we denote in (\ref{47}),
\[f(\lambda,v)=\Big(a_2-b_2 \lambda_\epsilon e^{-r\Phi(v_\epsilon )}-c_2 v_\epsilon\Big)v_\epsilon
\]
and
\[g(\lambda,v)=(a_1-b_1 \lambda_\epsilon e^{-r\Phi(v_\epsilon)}-c_1 v_\epsilon) e^{-r\Phi(v_\epsilon)};\]  moreover, we introduce the notations
\[\bar f_v=\frac{\partial f(v,\lambda)}{\partial v}\vert_{(v,\lambda)=(\bar v,\bar\lambda)},~~\bar f_\lambda=\frac{\partial f(v,\lambda)}{\partial \lambda}\vert_{(v,\lambda)=(\bar v,\bar\lambda)}\]
and in the same manner we can define $\bar f_{v\lambda}, \bar f_{vvv}, \bar g_{v}, \bar g_{\lambda}, \bar g_{v\lambda}, \bar g_{vv}$, etc.

Substituting (\ref{418}) into (\ref{47}) and collecting the $s^2$-terms, we obtain
\begin{equation}\label{419}
\epsilon_n\varphi_2''-\mathcal{K}_1\bigg(\frac{n\pi}{L}\bigg)^2\cos\frac{n\pi x}{L}+\bar{f}_v\varphi_2+\bar{f}_\lambda
\bar{\lambda}_2+\frac{\bar{f}_{vv}}{2}\cos^2\frac{n\pi x}{L}=0.
\end{equation}
Multiplying (\ref{419}) by $\cos\frac{n\pi x}{L}$ and integrating over $(0,L)$ by parts, we have
\begin{equation}\label{420}
\frac{n^2\pi^2}{L}\mathcal{K}_1=\Big(-\epsilon_n\big(\frac{n\pi}{L}\big)^2+\bar{f}_v\Big)\int_0^L\varphi_2\cos\frac{n\pi x}{L}=0,
\end{equation}
therefore $\mathcal{K}_1=0$ and the bifurcation branch around $(\bar{v},\bar{\lambda},\epsilon_n)$ is of pitch-fork type for all $n\in \mathbb N^+.$

Equating the $s^3$-terms from (\ref{47}) gives
\begin{eqnarray}\label{421}
\epsilon_n\varphi_3''+\bar{f}_v\varphi_3+\bar{f}_\lambda\bar{\lambda}_3-\mathcal{K}_2\bigg(\frac{n\pi}{L}\bigg)^2\cos\frac{n\pi x}{L}\\ \nonumber
+\bigg(\bar{f}_{vv}\varphi_2+\bar{f}_{v\lambda}\bar{\lambda}_2\bigg)\cos\frac{n\pi x}{L}+\frac{\bar{f}_{vvv}}{6}\cos^3\frac{n\pi x}{L}=0. \nonumber
\end{eqnarray}
Testing (\ref{421}) by $\cos\frac{n\pi x}{L}$, we obtain from straightforward calculations that
\begin{equation}\label{422}
\frac{n^2\pi^2}{2L}\mathcal{K}_2=\frac{\bar{f}_{vv}}{2}\Big(\int_0^L\varphi_2\cos\frac{2n\pi x}{L} dx+\int_0^L\varphi_2 dx\Big)+\frac{\bar{f}_{v\lambda}\bar{\lambda}_2 L}{2}+\frac{\bar{f}_{vvv}L}{16}.
\end{equation}
Multiplying (\ref{419}) by $\cos\frac{2n\pi x}{L}$ and integrating it over $(0,L)$ by parts, then thanks to $\mathcal{K}_1=0$ in (\ref{420}), we conclude from straightforward calculations that
\begin{equation}\label{423}
\int_0^L\varphi_2\cos\frac{2n\pi x}{L} dx=\frac{\bar{f}_{vv}L}{24\bar{f}_v},
\end{equation}
where we have used the fact that $\bar{f}_v=\epsilon_n\big(\frac{n\pi}{L}\big)^2$.
Integrating (\ref{419}) over $(0,L)$ by parts, we have
\begin{equation}\label{424}
\bar{f}_v\int_0^L\varphi_2 dx+\bar{f}_\lambda\bar{\lambda}_2L+\frac{\bar{f}_{vv}L}{4}=0.
\end{equation}
Moreover, we integrate the $s^2$-terms from the second equation of (\ref{47}) over $(0,L)$
\begin{equation}\label{425}
\bar{g}_v\int_0^L\varphi_2 dx+\bar{g}_\lambda\bar{\lambda}_2L+\frac{\bar{g}_{vv}L}{4}=0.
\end{equation}
Then we conclude from (\ref{424}) and (\ref{425}) that
\begin{eqnarray}\label{426}
\int_0^L\varphi_2dx=-\frac{\big(\bar{f}_{vv}\bar{g}_\lambda-\bar{g}_{vv}\bar{f}_\lambda\big)L}{4\big(\bar{f}_v
\bar{g}_\lambda-\bar{g}_v\bar{f}_\lambda\big)},~~\bar{\lambda}_2=-\frac{\bar{f}_v\bar{g}_{vv}-\bar{g}_v\bar{f}_{vv}}{4\big(\bar{f}_v\bar{g}_\lambda-\bar{g}_v\bar{f}_
\lambda\big)}.
\end{eqnarray}
Substituting (\ref{423}) and (\ref{426}) into (\ref{422}), we see that $\mathcal{K}_2$ equation becomes
\begin{eqnarray}\label{427}
\frac{n^2\pi^2}{2L}\mathcal{K}_2 \!\!\!\!\!&=& \!\!\!\!\! \frac{\bar{f}_{vv}}{2}\bigg(\frac{\bar{f}_{vv}L}{24\bar{f}_v}-\frac{\big(\bar{f}_{vv}\bar{g}_\lambda\!-\!\bar{g}_{vv}
\bar{f}_\lambda\big)L}{4\big(\bar{f}_v\bar{g}_\lambda-\bar{g}_v\bar{f}_\lambda\big)}\bigg)\!-\!\frac{\bar{f}_{v\lambda}\big(
\bar{f}_v\bar{g}_{vv}-\bar{g}_v\bar{f}_{vv}\big)L}{8\big(\bar{f}_v\bar{g}_\lambda-\bar{g}_v\bar{f}_
\lambda\big)}\!+\!\frac{\bar{f}_{vvv}L}{16}\nonumber\\
&=&\!\!\!\!\!\frac{\Big(\bar{g}_{vv}\big(\bar{f}_{vv}\bar{f}_\lambda-
\bar{f}_v\bar{f}_{v\lambda}\big)\!-\!\bar{f}_{vv}\big(\bar{f}_{vv}\bar{g}_\lambda\!-\!\bar{g}_v\bar{f}_{v\lambda}\big)\!\Big)L}{8\big(\bar{f}_v\bar{g}_\lambda\!-\!\bar{g}_v\bar{f}_\lambda\big)}
\!+\!\frac{\bar{f}_{vv}^2L}{48\bar{f}_v}\!+\!\frac{\bar{f}_{vvv}L}{16}.
\end{eqnarray}
To evaluate $\mathcal{K}_2$ in (\ref{427}), we derive the following partial derivatives of $f$ and $g$ at $(\bar{v},\bar{\lambda})$ through straightforward calculations
\begin{equation}\label{428}
\bar{f}_v=\big(a_2-c_2\bar{v}\big)r\bar{v}\Phi'(\bar{v})-c_2\bar{v},~\bar{f}_\lambda=-b_2\bar{v}e^{-r\Phi(\bar{v})},
\end{equation}
\begin{equation}\label{429}
\bar{f}_{vv}=\big(a_2-c_2\bar{v}\big)\big(-\bar{v}\big(\Phi'(\bar{v})\big)^2r^2+\big(2\Phi'(\bar{v})+\bar{v}
\Phi''(\bar{v})\big)r\big)-2c_2,
\end{equation}
\begin{equation}\label{430}
\bar{f}_{v\lambda}=b_2e^{-r\Phi(\bar{v})}\big(\bar{v}\Phi'(\bar{v})r-1\big)
\end{equation}
and
\begin{equation}\label{431}
\bar{f}_{vvv}=\big(a_2\!-\!c_2\bar{v}\big)\Big(\bar{v}\Phi'^3(\bar{v})r^3\!-\!3\Phi'(\bar{v})\big(\Phi'(\bar{v})\!+\!
\Phi''(\bar{v})\big)r^2\!+\!\big(3\Phi''(\bar{v})\!+\!\bar{v}\Phi'''(\bar{v})\big)r\Big),
\end{equation}
together with
\begin{equation}\label{432}
 \bar{g}_v=\Big(\big(a_1-c_1\bar{v}\big)\Phi'(\bar{v})r-c_1\Big)e^{-r\Phi(\bar{v})},~ \bar{g}_\lambda=-b_1e^{-2r\Phi(\bar{v})}
\end{equation}
and
\begin{equation}\label{433}
\bar{g}_{vv}=re^{-r\Phi(\bar{v})}\big(-3\big(\Phi'(\bar{v})\big)^2\big(a_1-c_1\bar{v}\big)r+\big(a_1-c_1\bar{v}\big)\Phi''(\bar{v})+2c_1\Phi'(
\bar{v})\big).
\end{equation}
We now proceed to calculate $\mathcal{K}_2$ in (\ref{427}) with (\ref{428})--(\ref{433}) as follows.  First of all, in the light of the fact $b_1(a_2-c_2\bar{v})=b_2(a_1-c_1\bar{v})$, we have from (\ref{428}) and (\ref{432}) that
\[\bar{f}_v\bar{g}_\lambda-\bar{g}_v\bar{f}_\lambda=\big(b_1c_2-b_2c_1\big)\bar{v}e^{-2r\Phi(\bar{v})};\]
moreover, we have
\begin{eqnarray}\label{434}
&&\bar{g}_{vv}\big(\bar{f}_{vv}\bar{f}_\lambda-
\bar{f}_v\bar{f}_{v\lambda}\big)
-\bar{f}_{vv}\big(\bar{f}_{vv}\bar{g}_\lambda-\bar{g}_v\bar{f}_{v\lambda}\big)\nonumber\\
&=&\bigg(\bar{f}_{vv}\big(b_1\bar{f}_{vv}e^{-r\Phi(\bar{v})}-b_2\bar{v}\bar{g}_{vv}\big)+\bar{f}_{v\lambda} \Big(\bar{f}_{vv}\big(\big(a_1-c_1\bar{v}
\big)r\Phi'(\bar{v})-c_1\big)\nonumber\\
&&-\bar{g}_{vv}e^{r\Phi(\bar{v})}\big(\big(a_2-c_2\bar{v}\big)r\Phi'(\bar{v})-c_2\big)\bar{v}\Big)\bigg)e^{-r
\Phi(\bar{v})}\\
&=& \Big(\big(b_1\bar{f}_{vv}-b_2\bar{v}e^{r\Phi(\bar{v})}\bar{g}_{vv}\big)\big(\bar{f}_{vv}e^{-r\Phi(\bar{v})}
+\bar{f}_{v\lambda}e^{-r\Phi(\bar{v})}\Phi'(\bar{v})\bar{\lambda} r
\big)\nonumber\\
&&+\bar{f}_{v\lambda}\big(c_2\bar{v}e^{r\Phi(\bar{v})}\bar{g}_{vv}-c_1\bar{f}_{vv}\big)\Big)e^{-r\Phi(\bar{v})}; \nonumber
\end{eqnarray}
furthermore, we apply (\ref{429}), (\ref{430}) and (\ref{433}) and obtain in (\ref{434}) that
\begin{eqnarray*}\label{435}
&&\bar{f}_{vv}e^{-r\Phi(\bar{v})}+\bar{f}_{v\lambda}e^{-r\Phi(\bar{v})}\Phi'(\bar{v})\bar{\lambda} r\nonumber\\
&=&\Big(\big(a_2-c_2\bar{v}\big)\big(-\bar{v}\big(\Phi'(\bar{v})\big)^2r^2+\big(2\Phi'(\bar{v})+\bar{v}
\Phi''(\bar{v})\big)r\big)-2c_2+b_2\bar{\lambda} \nonumber\\
&&\big(r\bar{v}\Phi'(\bar{v})-1\big)\Phi'(\bar{v})re^{-r\Phi(\bar{v})}\Big)e^{-r\Phi(\bar{v})}\nonumber\\
&=&\Big(-2c_2+\big(a_2-c_2\bar{v}\big)\big(r\Phi'(\bar{v})+r\bar{v}\Phi''(\bar{v})\big)\Big)e^{-r\Phi(\bar{v})},
\end{eqnarray*}
then we conclude from straightforward calculations with (\ref{428}) and (\ref{429})
\begin{eqnarray}\label{436}
&&\frac{\bar{g}_{vv}\big(\bar{f}_{vv}\bar{f}_\lambda-
\bar{f}_v\bar{f}_{v\lambda}\big)-\bar{f}_{vv}\big(\bar{f}_{vv}\bar{g}_\lambda-\bar{g}_v\bar{f}_{v\lambda}\big)}
{\bar{f}_v\bar{g}_\lambda-\bar{g}_v\bar{f}_\lambda}\nonumber\\
&=&\frac{1}{\big(b_1c_2-b_2c_1\big)\bar v}\Big(
\big(b_1\bar{f}_{vv}-b_2\bar{v}e^{r\Phi(\bar{v})}\bar{g}_{vv}\big)\big(-2c_2+\big(a_2-c_2\bar{v}\big)\big(r\Phi'
(\bar{v})\nonumber\\
&&+r\bar{v}\Phi''(\bar{v})\big)\big)+\bar{f}_{v\lambda}e^{r\Phi(\bar{v})}\big(c_2\bar{v}\bar{g}_{vv}e^{r\Phi(\bar{v})}
-c_1\bar{f}_{vv}
\big)\Big).
\end{eqnarray}
Substituting (\ref{436}) into (\ref{427}), we arrive at the following form for $\mathcal{K}_2$
\begin{equation}\label{437}
\begin{split}
\frac{n^2\pi^2}{2L}\mathcal{K}_2 &=\frac{L}{8\bar{v}\big(b_1c_2-b_2c_1\big)}\Bigg(\big(b_1
\bar{f}_{vv}-b_2\bar{v}e^{r\Phi(\bar{v})}\bar{g}_{vv}\big)\big(-2c_2+\big(a_2-c_2\bar{v}\big)\big(r\Phi'(\bar{v})\\
&+r\bar{v}\Phi''(\bar{v})\big)\big)+c_2\bar{v}e^{2r\Phi(\bar{v})}\bar{f}_{v\lambda}\bar{g}_{vv}+\Big(\frac{\bar{v}
\big(b_1c_2-b_2c_1\big)\bar{f}_{vv}}{6f_v}-c_1e^{r\Phi(\bar{v})}\bar{f}_{v\lambda}\Big)\bar{f}_{vv}\\
&+\frac{\bar{v}\big(b_1c_2-b_2c_1\big)\bar{f}_{vvv}}{2}\Bigg).
\end{split}
\end{equation}

$\mathcal{K}_2$ in (\ref{437}) is extremely complicated and it is difficult to evalute its sign in order to determine the turning direction of each bifurcation branch, therefore, for the simplicity of calculations, we choose $\Phi(v)=v$ and consider (\ref{47}) in the strong competition case with $b_1=0$.  Actually, we see that if $b_1=0$, both $(\bar u, \bar v)$ and the small-amplitude bifurcating solutions $(u_k(s,x),v_k(s,x))$ are unstable, therefore, we are motivated to study the solutions that have large amplitude.  Indeed, we shall see that, the shadow system (\ref{47}) has solutions with interior transition layer when $b_1=0$, therefore, this assumption does not inhabit the applications of our original system (\ref{35}) and the shadow system (\ref{47}) in the mathematical modeling of interspecific segregation.

Assuming $\Phi(v)=v$ and $b_1=0$, we see that (\ref{437}) becomes
\begin{align}\label{438}
\frac{n^2\pi^2}{2L}\mathcal{K}_2 =& -\frac{L}{48c_1\big(\big(a_2\!-\!c_2\bar{v}\big)r\!-\!c_2\big)}\Big(\big(a_2\!-\!c_2\bar{v}\big)^2r^2\big(2a_1\bar{v}r^2-17a_1r^1
\!+\!8c_1\big)+\\
&\big(a_2\!-\!c_2\bar{v}\big)r\big(9a_1c_2\bar{v}r^2\!+\!41a_1c_2r\!-\!16c_1c_2\big)\!-\!
12a_1c_2^2\bar{v}r^2\!-\!24a_1c_2^2r\!+\!8c_1c_2^2\Big),\nonumber
\end{align}
where $\bar{v}=\frac{a_1}{c_1}$.  Moreover, for notational simplicity, we denote
\[\theta=\big(a_2-c_2\bar v\big)r,\]
then (\ref{413}) implies that bifurcation occurs at $(\bar v,\bar \lambda,\epsilon_n)$ only if $\theta>c_2$ and we will assume this condition throughout the rest of this section.  Now we see that (\ref{438}) becomes
\begin{equation}\label{439}
\frac{n^2\pi^2}{2L}\mathcal{K}_2=\frac{F(\theta)}{48\big(\theta-c_2\big)}=\frac{\alpha\theta^2+\beta\theta+\gamma}{48\big
(\theta-c_2\big)},
\end{equation}
where we have employed the notations
\begin{equation}\label{440}
\alpha=-2a_1\bar{v}r^2+17a_1r-8c_1,
\end{equation}
\begin{equation}\label{441}
\beta=-9a_1c_2\bar{v}r^2-41a_1c_2r+16c_1c_2,
\end{equation}
and
\begin{equation}\label{442}
\gamma=12a_1c_2^2\bar{v}r^2+24a_1c_2^2r-8c_1c_2^2.
\end{equation}
To determine the sign of $\mathcal{K}_2$ for all $\theta>c_2$, we first have through straightforward calculations that the determinant of the quadratic function $F(\theta)$ in (\ref{439}) is
\begin{equation*}
\beta^2-4\alpha\gamma=177a^2_1c^2_2r^2\Big(\big(\bar{v}r+\frac{57}{177}\big)^2+\frac{6228}{177^2}\Big)>0,
\end{equation*}
therefore $F(\theta)=0$ always have two roots
\begin{equation}\label{443}
\theta_1=\frac{-\beta-\sqrt{\beta^2-4\alpha\gamma}}{2\alpha},~\theta_2=\frac{-\beta+\sqrt{\beta^2-4\alpha\gamma}}{2\alpha}.
\end{equation}
We now present the following results concerning (\ref{439}).
\begin{proposition}\label{prop3}
Suppose that $\Phi(v)=v$ and $b_1=0$ in (\ref{47}) and the condition (\ref{12}) holds.  Denote $\theta=\big(a_2-c_2\bar v\big)r$ and assume that $\theta>c_2$.  For each $n\in \mathbb N^+$, we have the following facts about $\mathcal{K}_2$ in (\ref{418}):

(i). if $r\in(0,\frac{1}{2\bar{v}})\cup(\frac{8}{\bar{v}},\infty)$, $\mathcal{K}_2>0$ for $\theta\in(c_2,\theta_1)$, and $\mathcal{K}_2<0$, for $\theta\in(\theta_1,\infty)$;

(ii). if $r\in(\frac{1}{2\bar{v}},\frac{8}{\bar{v}})$, $\mathcal{K}_2>0$ for $\theta\in(c_2,\theta_1)\cup(\theta_2,\infty)$, and $\mathcal{K}_2<0$ for $\theta\in(\theta_1,\theta_2)$;

(iii). if $r=\frac{1}{2\bar{v}}$, $\mathcal{K}_2>0$ for $\theta\in(c_2,\frac{28c_2}{27})$, and $\mathcal{K}_2<0$ for $\theta\in(\frac{28c_2}{27},\infty)$;

(iv). if $r=\frac{8}{\bar{v}}$, $\mathcal{K}_2>0$ for $\theta\in(c_2,\frac{120c_2}{111})$, and $\mathcal{K}_2<0$ for $\theta\in(\frac{120c_2}{111},\infty)$,
where we have $\bar v=\frac{a_1}{c_1}$ in (i)-(iv).
\end{proposition}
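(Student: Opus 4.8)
The plan is to reduce everything to the sign of the quadratic $F(\theta)=\alpha\theta^2+\beta\theta+\gamma$ appearing in (\ref{439}): since the hypothesis $\theta>c_2$ makes $48(\theta-c_2)>0$, the identity (\ref{439}) gives $\text{sgn}\,\mathcal K_2=\text{sgn}\,F(\theta)$, and the discriminant $\beta^2-4\alpha\gamma>0$ has already been recorded, so $F$ has two distinct real roots $\theta_1,\theta_2$ as in (\ref{443}) whenever $\alpha\neq0$. The first step is to determine the sign of the leading coefficient $\alpha=-2a_1\bar v r^2+17a_1r-8c_1$ as $r$ varies. Regarding it as a quadratic in $r$ and using $\bar v=a_1/c_1$ (hence $c_1\bar v=a_1$), its discriminant collapses to $(15a_1)^2$, so its roots are exactly $r=\frac1{2\bar v}$ and $r=\frac8{\bar v}$; since the leading coefficient $-2a_1\bar v$ is negative, $\alpha<0$ on $(0,\frac1{2\bar v})\cup(\frac8{\bar v},\infty)$, $\alpha>0$ on $(\frac1{2\bar v},\frac8{\bar v})$, and $\alpha=0$ at the two endpoints---which is precisely the trichotomy underlying cases (i)--(iv).

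The second step is to locate the roots of $F$ relative to $c_2$, and the crucial observation is that $F(c_2)$ and $F'(c_2)$ have clean signs. Expanding $\alpha,\beta,\gamma$ from (\ref{440})--(\ref{442}) and collecting the cancellations, one finds $F(c_2)=\alpha c_2^2+\beta c_2+\gamma=a_1\bar v c_2^2 r^2>0$ and $F'(c_2)=2\alpha c_2+\beta=-a_1 c_2 r\,(13\bar v r+7)<0$ for every $r>0$. From these two facts the cases follow by elementary reasoning about parabolas. When $\alpha<0$, $F$ is a downward parabola, so $F(c_2)>0$ forces $c_2$ to lie strictly between its two roots; in the convention (\ref{443}) division by $2\alpha<0$ makes $\theta_1$ the larger root, hence $F>0$ on $(c_2,\theta_1)$ and $F<0$ on $(\theta_1,\infty)$, giving (i). When $\alpha>0$, $F$ is an upward parabola with $\theta_1<\theta_2$; $F(c_2)>0$ together with $F'(c_2)<0$ (i.e.\ $c_2$ is left of the vertex) puts $c_2$ below both roots, so $F>0$ on $(c_2,\theta_1)\cup(\theta_2,\infty)$ and $F<0$ on $(\theta_1,\theta_2)$, giving (ii). When $\alpha=0$, $F(\theta)=\beta\theta+\gamma$ is affine with $\beta=F'(c_2)<0$, hence decreasing, and $F(c_2)>0$, so its unique zero $\theta^{*}=-\gamma/\beta$ exceeds $c_2$ with $F>0$ on $(c_2,\theta^{*})$ and $F<0$ beyond; substituting $r=\frac1{2\bar v}$ and $r=\frac8{\bar v}$ (using $c_1=a_1/\bar v$) into $-\gamma/\beta$ produces the explicit thresholds quoted in (iii) and (iv).

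The content is conceptually short; the only genuine obstacle is the bookkeeping. I expect most of the effort to go into the two ``straightforward calculations'' $F(c_2)=a_1\bar v c_2^2 r^2$ and $F'(c_2)=-a_1 c_2 r(13\bar v r+7)$---one must expand (\ref{440})--(\ref{442}), substitute $\bar v=a_1/c_1$ consistently, and track many cancellations---together with the closing numeric evaluation of $-\gamma/\beta$ at the two endpoint values of $r$. Some care is also needed with the sign of the factor $2\alpha$ in (\ref{443}), so that the labelling of $\theta_1,\theta_2$ in cases (i)--(ii) matches the statement, and with checking that the intervals $(c_2,\theta_1)$ and $(\theta_2,\infty)$ in (i)--(ii) are nonempty, which follows from $\theta_1>c_2$ established above.
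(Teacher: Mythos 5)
Your proposal is correct and follows essentially the same route as the paper's proof: reduce to the sign of $F(\theta)$, locate the roots of $\alpha$ in $r$ at $\tfrac{1}{2\bar v}$ and $\tfrac{8}{\bar v}$, use $F(c_2)>0$ together with the position of $c_2$ relative to the vertex (your $F'(c_2)<0$ is exactly the paper's observation that $\mathrm{sgn}(c_2-\theta^*)=\mathrm{sgn}(-\alpha)$, since $F'(c_2)=2\alpha(c_2-\theta^*)$), and treat the degenerate $\alpha=0$ cases via the affine zero $-\gamma/\beta$. Two incidental remarks: your $F(c_2)=a_1\bar v c_2^2r^2$ is the correct value (the paper drops the factor $\bar v$, harmlessly), and in case (iv) a careful evaluation gives $\gamma=952c_1c_2^2$, hence the threshold $\tfrac{119c_2}{111}$ rather than the paper's $\tfrac{120c_2}{111}$ — a slip in the paper's arithmetic, not in your method.
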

The results in Proposition \ref{prop3} are presented in Figure \ref{fig2}.
\begin{proof} For all $\theta>c_2$, we readily see from (\ref{439}) that $\mathcal{K}_2$ has the same sign as $F(\theta)=\alpha\theta^2+\beta\theta+\gamma$.  Moreover, in light of (\ref{439})--(\ref{442}), we have from straightforward calculations that $F(c_2)=a_1c_2^2r^2>0$.  On the other hand, we notice that (\ref{440}) is equivalent as
\begin{eqnarray*}\label{445}
\alpha=-c_1\big(2\bar{v}r-1)\big(r-\frac{8}{\bar{v}}\big),
\end{eqnarray*}
and $\alpha=0$ if $r=\frac{1}{2\bar{v}}$ or $r=\frac{8}{\bar v}$, then it gives that $F(\theta)=-\frac{27c_1c_2}{4}\theta+7c_1c_2^2$ and $F(\theta)=-888c_1c_2\theta+960c_1c_2^2$ respectively.  Therefore, \emph{(iii)} and \emph{(iv)} follows from (\ref{439}) immediately.
\begin{figure}[!htb]
\centering
\minipage{.34\textwidth}\centering
  \includegraphics[width=0.8in]{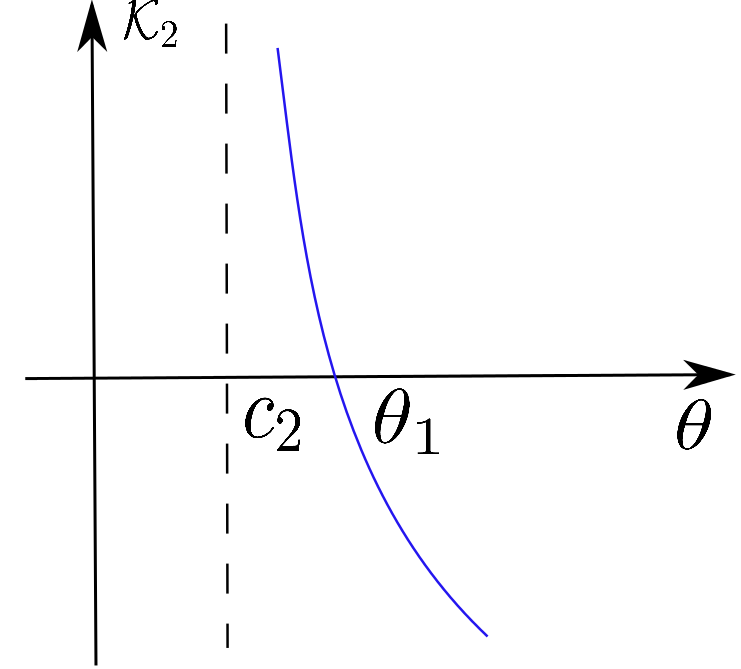}\caption*{$r\in(0,\frac{1}{2\bar v})\cup(\frac{8}{\bar v},\infty)$}
\endminipage
\minipage{.28\textwidth}\centering
  \includegraphics[width=0.8in]{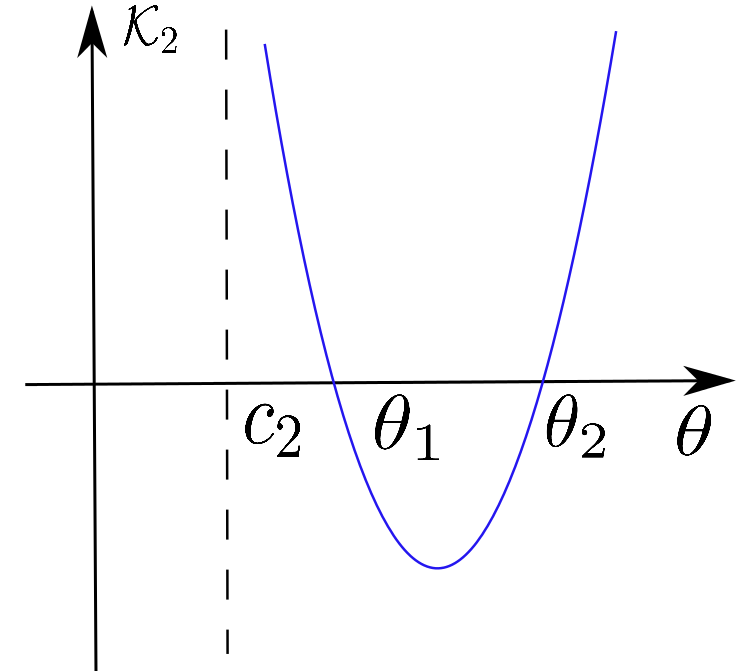}\caption*{$r\in(\frac{1}{2\bar v},\frac{8}{\bar v})$\\}
\endminipage
\minipage{.18\textwidth} \centering
  \includegraphics[width=0.8in]{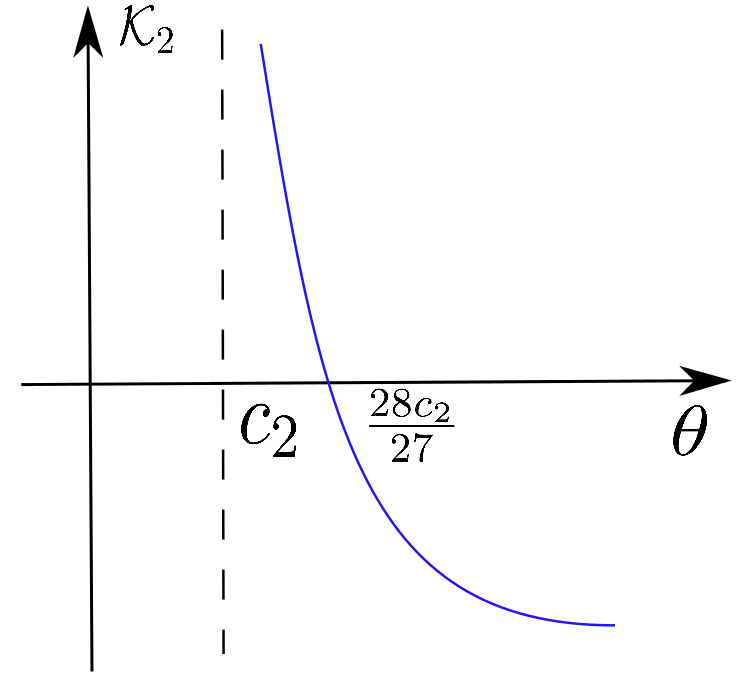}\caption*{$r=\frac{1}{2\bar v}$\\}
\endminipage
\minipage{.18\textwidth}\centering
  \includegraphics[width=0.8in]{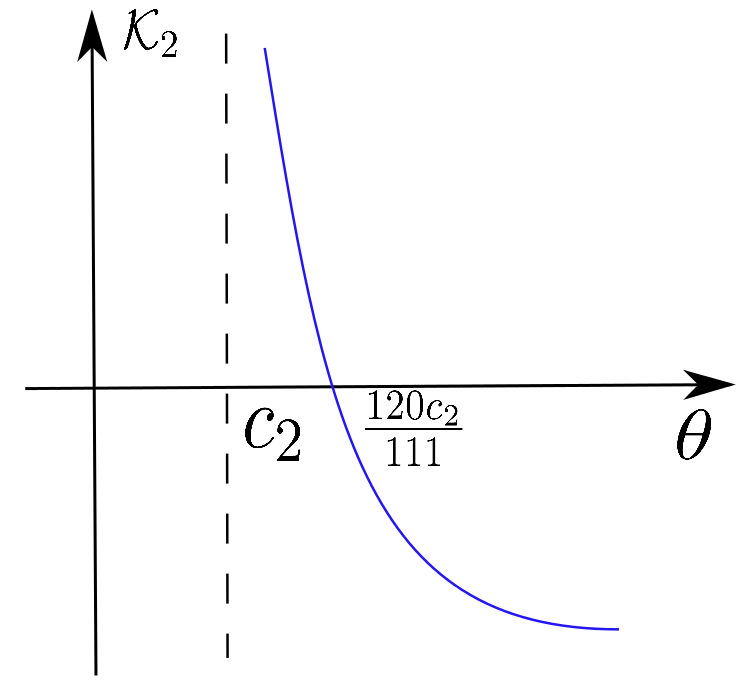}\caption*{$r=\frac{8}{\bar v}$\\}
\endminipage
\caption{Graphs of $\mathcal{K}_2$ as a function of $\theta=\big(a_2-c_2\bar v\big)r$; the assumption $\theta>c_2$ is required such that bifurcation occurs at $(\bar v,\bar \lambda,\epsilon_n)$.}\label{fig2}
\end{figure}

To prove \emph{(i)} and \emph{(ii)},  we compare axis of symmetry $\theta^*=-\frac{\beta}{2\alpha}$ of the parabola $F(\theta)$ with $c_2$
\[c_2-\theta^*=\frac{\beta+2c_2\alpha}{2\alpha}=-\frac{13a_1c_2\bar{v}r^2+7a_1c_2r}{2\alpha},\]
then it readily implies that sgn$(c_2-\theta^*)=$sgn$(-\alpha)$.  If $r\in(0,\frac{1}{2\bar{v}})\cup(\frac{8}{\bar{v}},\infty)$, we have $c_2-\theta^*>0$ since $\alpha<0$, then the parabola $F(\theta)$ opens down and with its axis of symmetry to the left of $c_2$.  Therefore, $F(\theta)>0,\mathcal{K}_2>0$, if $\theta\in(c_2,\beta_1)$, and $F(\theta)<0,\mathcal{K}_2<0$, if $\theta\in(\beta_1,\infty)$.  This completes the proof of \emph{(i)}.  By the same arguments, we can show \emph{(ii)} and this proves Proposition \ref{prop3}.
\end{proof}

Since $F(c_2)>0$, we see that $\mathcal{K}_2>0$ if $\theta>c_2$ is small, i.e, for all $(a_2-c_2\bar v )r-c_2>0$ being small.  We shall interpret this observation by its relevance with our results and mathematical modeling.  Before this, we present the following theorem on the stability of the bifurcating solution $(v_n(s,x),\lambda_n(s))$ established in Theorem \ref{thm43}.  Here the stability refers to the stability of the inhomogeneous solutions taken as an equilibrium to the time-dependent counterpart to (\ref{47}).
\begin{theorem}\label{thm44}
Suppose that all the conditions in Proposition \ref{prop3} are satisfied.  Then for each $n\in \mathbb N^+$, the bifurcation curve $\Gamma_n(s)$ at $(\bar v,\bar \lambda,\epsilon_n)$ is of pitch-fork type.  Moreover, the bifurcating solution $(v_n(s,x),\lambda_n(s))$ is always unstable for all $n\geq 2$; $(v_1(s,x),\lambda_1(s))$ is unstable if $\mathcal{K}_2>0$ and it is asymptotically stable if $\mathcal{K}_2<0$, where $\mathcal{K}_2$ is given by (\ref{439}).
\end{theorem}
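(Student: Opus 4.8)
The plan is to reproduce, for the shadow system, the stability argument used in the proof of Theorem \ref{thm32}. The pitch-fork claim is immediate: since $\mathcal{K}_1=0$ was already established in (\ref{420}), the expansion (\ref{418}) reads $\epsilon_n(s)=\epsilon_n+s^2\mathcal{K}_2+o(s^2)$, which is even in $s$ to leading order, so each branch $\Gamma_n(s)$ is a pitch-fork. For the stability of $(v_n(s,x),\lambda_n(s))$ one passes to the eigenvalue problem for the linearization of the time-dependent counterpart of (\ref{47}) at the bifurcating solution and studies its limit as $s\to 0$, namely the linearization at $(\bar v,\bar\lambda,\epsilon_n)$. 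Since the $\lambda$-equation of (\ref{47}) is an algebraic constraint and carries no spectral parameter, this limiting problem has the form
\[
\mu\phi=\epsilon_n\phi''+\bar{f}_v\phi+\bar{f}_\lambda\eta,\qquad 0=\int_0^L\big(\bar{g}_v\phi+\bar{g}_\lambda\eta\big)\,dx,\qquad \phi'(0)=\phi'(L)=0.
\]

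Next one expands $\phi=\sum_{k\ge0}t_k\cos\frac{k\pi x}{L}$ and separates Fourier modes. Because $b_1=0$ forces $a_1=c_1\bar v$, one has $\bar{g}_\lambda=0$ and $\bar{g}_v=-c_1e^{-r\Phi(\bar v)}\neq 0$, so the constraint collapses to $\int_0^L\phi\,dx=0$, i.e. $t_0=0$; feeding this into the mode-$0$ component of the first equation gives $\bar{f}_\lambda\eta=0$, hence $\eta=0$. The problem therefore reduces to the self-adjoint problem $\mu\phi=\epsilon_n\phi''+\bar{f}_v\phi$ on the mean-zero subspace of $\mathcal{X}$, with eigenvalues $\mu_k=-\epsilon_n\big(\frac{k\pi}{L}\big)^2+\bar{f}_v$, $k\ge1$. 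Using the bifurcation identity $\bar{f}_v=\epsilon_n\big(\frac{n\pi}{L}\big)^2$ coming from (\ref{412})--(\ref{413}) and (\ref{428}), this is $\mu_k=\epsilon_n\big(\frac{\pi}{L}\big)^2(n^2-k^2)$, so $\mu_k>0$ for $1\le k<n$, $\mu_n=0$, $\mu_k<0$ for $k>n$. For $n\ge2$ the eigenvalues $\mu_1,\dots,\mu_{n-1}$ are positive and bounded away from zero, and by the eigenvalue perturbation theory of \cite{Ka} they persist along the branch for small $|s|$; hence $(v_n(s,x),\lambda_n(s))$ is unstable.

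For $n=1$ the only non-negative eigenvalue of the limiting problem is the simple zero eigenvalue in the mode-$1$ direction, its algebraic simplicity being precisely the transversality condition (\ref{415}). As in the proof of Theorem \ref{thm32}, one then invokes Corollary 1.13 and Theorem 1.16 of \cite{CR2} to produce the $C^1$ branch $\mu(s)$ of perturbed eigenvalues with $\mu(0)=0$ — the unique one near the origin — and the fact that $\mu(s)$ and $-s\,\epsilon_1'(s)\,\dot\mu(\epsilon_1)$ have the same sign for small $s\neq0$. Here $\dot\mu(\epsilon_1)=\frac{d}{d\epsilon}\big(\bar{f}_v-\epsilon(\tfrac{\pi}{L})^2\big)\big|_{\epsilon=\epsilon_1}=-(\tfrac{\pi}{L})^2<0$, while $\mathcal{K}_1=0$ gives $\epsilon_1'(s)=2s\mathcal{K}_2+o(s)$, so $-s\,\epsilon_1'(s)\,\dot\mu(\epsilon_1)=2s^2(\tfrac{\pi}{L})^2\mathcal{K}_2+o(s^2)$ has the sign of $\mathcal{K}_2$. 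Therefore $\operatorname{sgn}\mu(s)=\operatorname{sgn}\mathcal{K}_2$: when $\mathcal{K}_2<0$ all eigenvalues of the linearization at $(v_1(s,x),\lambda_1(s))$ have negative real part, giving asymptotic stability, and when $\mathcal{K}_2>0$ there is a positive eigenvalue, giving instability.

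The step I expect to be the main obstacle is the correct handling of the mode-$0$ sector together with the algebraic $\lambda$-component: one must argue that the constraint equation of (\ref{47}) contributes no spectral parameter, and then exploit $b_1=0$ (so that $\bar{g}_\lambda=0$ and $g(\bar v,\bar\lambda)=0$) to conclude that the constraint linearizes to the clean condition $\int_0^L\phi\,dx=0$, which is exactly what removes the otherwise destabilizing constant mode $\mu_0=\bar{f}_v>0$. A secondary technical point is justifying the $s\to0$ limit of the eigenvalue problem and the applicability of the Crandall--Rabinowitz perturbation framework to the Fredholm operator $D_{(v,\lambda)}\mathcal{T}$ rather than to an operator with compact resolvent, but this is carried out exactly as in Section \ref{section3}.
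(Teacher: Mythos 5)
Your argument is correct and follows the same Crandall--Rabinowitz exchange-of-stability route as the paper: $\mathcal{K}_1=0$ gives the pitchfork, the $s\to0$ limit of the linearized problem has a positive eigenvalue in some mode $1\le k<n$ when $n\ge2$, and for $n=1$ Theorem 1.16 of [CR2] together with $\dot{\zeta}(\epsilon_1)=-(\pi/L)^2$ and $\epsilon_1'(s)=2s\mathcal{K}_2+o(s)$ yields $\mathrm{sgn}\,\eta(s)=\mathrm{sgn}\,\mathcal{K}_2$, exactly as in the text. The one genuine point of divergence is your treatment of the constraint component of the linearization. The paper poses the eigenvalue problem as $D_{(v,\lambda)}\mathcal{T}(v,\lambda)=\eta(v,\lambda)$ in (\ref{446})--(\ref{447}), so the integral row reads $\int_0^L(\bar g_v v+\bar g_\lambda\lambda)\,dx=\eta\lambda$, whereas you impose that row with no spectral parameter; using $b_1=0$ (hence $\bar g_\lambda=0$, $\bar g_v=-c_1e^{-r\bar v}\ne0$) this forces $\int_0^L\phi\,dx=0$ and then $\eta=0$, leaving the self-adjoint problem with spectrum $\epsilon_n(\pi/L)^2(n^2-k^2)$, $k\ge1$. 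The choice is immaterial for the pitchfork structure, for the $n\ge2$ instability, and for the sign of the critical eigenvalue, but it matters for the stability half of the $n=1$ case: under the paper's literal formulation the $k=0$ sector is governed by the matrix $\begin{pmatrix}\bar f_v&\bar f_\lambda\\ L\bar g_v&0\end{pmatrix}$, whose determinant $-L\bar f_\lambda\bar g_v=-Lb_2c_1\bar v e^{-2r\bar v}<0$ produces a positive eigenvalue independent of $s$, and the paper's closing assertion that the nonzero eigenvalues near the origin have negative real part does not address this sector at all. So your reading of the constraint is not merely a restatement; it removes the constant mode and thereby supplies the justification the paper omits. The residual caveat, which you share with the paper, is that neither argument actually derives the time-dependent counterpart of the shadow system (in which $\lambda$ satisfies an ODE coming from mass balance and the homogeneous mode again carries a positive eigenvalue in the strong-competition case $b_1=0$), so ``asymptotic stability'' here should be understood relative to perturbations respecting the integral constraint, which is the sense in which both proofs operate.
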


The local bifurcation branches described in Theorem \ref{thm44} are schematically presented in Figure \ref{fig3}.  In contrast to the bifurcation branches in Figure \ref{fig1}, small $\epsilon$ supports stable bifurcating solutions as $\Gamma_1(s)$ turns to the left, while large $\epsilon$ tends to exclude the nontrivial solutions $\Gamma_1(s)$ turns to the right.
\begin{figure}[!htb]
\minipage{.4\textwidth}
  \includegraphics[width=1.2in]{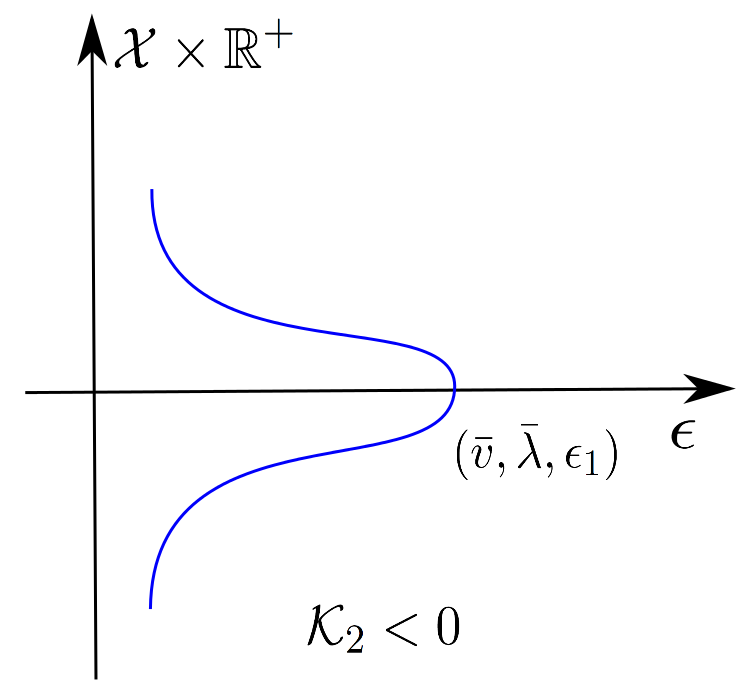}
\endminipage
\minipage{.2\textwidth}
  \includegraphics[width=1.2in]{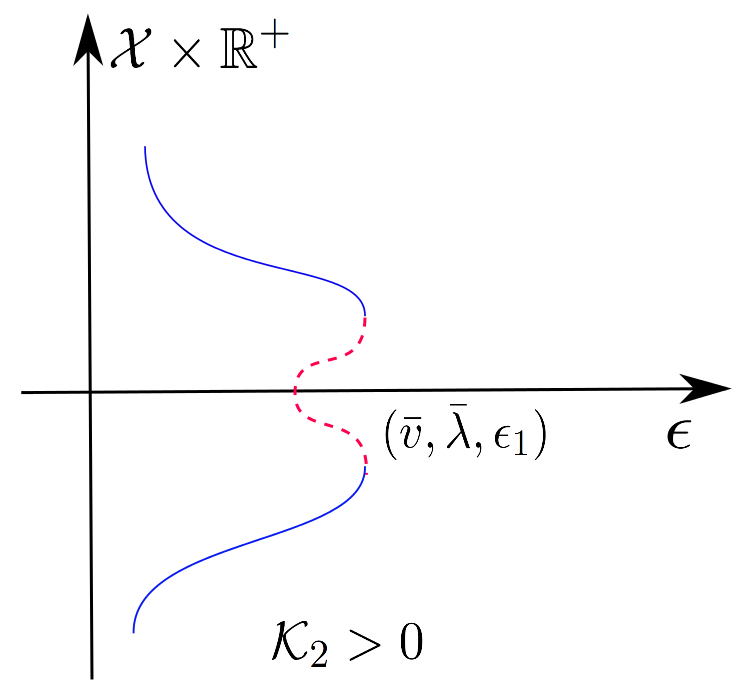}
\endminipage
\caption{Pitchfork type bifurcation branches.  The solid line represents stable bifurcating solution $(v_1(s,x),\lambda_1(s),\epsilon_1(s))$ and the dashed line represents unstable solution.}\label{fig3}
\end{figure}

Following the results of \cite{CR2}, we can prove this theorem by the same analysis that leads to Theorem \ref{thm32}.  For the sake of completeness, we sketch it as follows.
To study the stability of the bifurcating solution from $(\bar v,\bar \lambda,\epsilon_n)$, we linearize (\ref{47}) at $(v_n(s,x),\lambda_n(s),\epsilon_n(s))$.  By the principle of the linearized stability in Theorem 8.6 \cite{CR2}, to show that they are asymptotically stable, we need to prove that the each eigenvalue $\eta$ of the following elliptic problem has negative real part:
\begin{equation}\label{446}
D_{(v,\lambda)}\mathcal{T}(v_n(s,x),\lambda_n(s),\epsilon_n(s))(v,\lambda)=\eta (v,\lambda),~(v,\lambda)\in\mathcal{X} \times \mathbb{R}.
\end{equation}
where $v_n(s,x)$, $\lambda_n(s)$ and $\epsilon_n(s)$ are established in Theorem \ref{thm43}.  On the other hand, we observe that $0$ is a simple eigenvalue of $D_{(v,\lambda)}\mathcal{T}(\bar v,\bar \lambda,\epsilon_n)$ with an eigenspace $\text{span}\{(\cos \frac{n\pi x}{L},0)\}$. It follows from Corollary 1.13 in \cite{CR2} that, there exist an internal $I$ with $\epsilon_n\in I$ and continuously differentiable functions $\epsilon\in I\rightarrow \zeta(\epsilon),~s\in(-\delta,\delta) \rightarrow \eta(s)$ with $\eta(0)=0$ and $\zeta(\epsilon_n)=0$ such that $\eta(s)$ is an eigenvalue of (\ref{446}) and $\zeta(\chi)$ is an eigenvalue of the following eigenvalue problem
\begin{equation}\label{447}
D_{(v,\lambda)}\mathcal{T}(\bar v,\bar \lambda,\epsilon)(v,\lambda)=\zeta(v,\lambda),~(v,\lambda)\in \mathcal{X} \times \mathbb{R};
\end{equation}
moreover, $\eta(s)$ is the only eigenvalue of (\ref{446}) in any fixed neighbourhood of the origin of the complex plane (the same assertion can be made on $\zeta(\epsilon)$).  We also know from \cite{CR} that the eigenfunctions of (\ref{447}) can be represented by $(v(\epsilon,x),\lambda(\epsilon))$ which depend on $\epsilon$ smoothly and can be uniquely determined by $\big(v(\epsilon_n,x),\lambda(\epsilon_n)\big)=\big(\cos \frac{n\pi x}{L},0 \big)$.

\begin{proof} [Proof\nopunct] \emph{of Theorem} \ref{thm44}.
By the same analysis that leads to part \emph{(i)} of Theorem \ref{thm31}, we can show that (\ref{446}) has a positive eigenvalue for each $n\geq2$, therefore $(v_n(s,x),\lambda_n(s),\epsilon_n(s))$ is unstable for $n\geq2$.

Now we proceed to study the stability of $(v_1(s,x),\lambda_1(s),\epsilon_1(s))$.  Similar as the analysis that leads to the claim behind (\ref{49}), we can show that $\lambda=0$ in (\ref{447}).  Then by differentiating (\ref{447}) with respect to $\epsilon$ and setting $\epsilon=\epsilon_1$, we collect the following system from (\ref{48}) in light of $\zeta(\epsilon_1)=0$
\begin{equation}\label{448}
\left\{
\begin{array}{ll}
-\big(\frac{\pi}{L})^2\cos\frac{\pi x}{L}+\epsilon_1\dot{v}''+\big((a_2-c_2\bar{v}) r -c_2\big)\bar v\dot{v}=\dot{\zeta}(\epsilon_1)\cos\frac{\pi x}{L},\\
\int_0^L(-c_1+(a_1-c_1\bar v)r)\dot{v}dx=0,\\
\dot{v}'(0)=\dot{v}'(L)=0,
\end{array}
\right.
\end{equation}
where the dot-sign means the differentiation with respect to $\epsilon$ evaluated at $\epsilon=\epsilon_1$ and in particular $\dot{v}=\frac{\partial v(\epsilon,x)}{\partial \epsilon}\big\vert_{\epsilon=\epsilon_1}$.

Multiplying the first equations of (\ref{448}) by $\cos\frac{\pi x}{L}$ and integrating it over $(0,L)$ by parts, we apply (\ref{413}) with $\Phi'(v)=1$ and obtain from (\ref{448})
\[\dot{\zeta}(\epsilon_1)=-\big(\frac{\pi}{L}\big)^2.\]
According to Theorem 1.16 in \cite{CR2}, the functions $\eta(s)$ and $-s\epsilon'_1(s)\dot{\zeta}(\epsilon_1)$ have the same zeros and the same signs for $s\in(-\delta,\delta)$.  Moreover
\[\lim_{s\rightarrow 0,~\eta(s)\neq0}\frac{-s\epsilon'_1(s)\dot{\zeta}(\epsilon_1)}{\eta(s)}=1.\]
Now, since $\mathcal{K}_1=0$, it follows that $\lim_{s\rightarrow 0} \frac{s^2\mathcal{K}_2 }{\eta(s)}=\big( \frac{L}{\pi}\big)^2$ and we readily see that $\text{sgn}(\eta(s))=\text{sgn}(\mathcal{K}_2)$ for $s\in(-\delta,\delta)$, $s\neq0$.  Moreover, we can show that the the nonzero eigenvalue around the origin must have negative real part.  Therefore, we have proved Theorem \ref{thm31} according to the discussions above.
\end{proof}

Theorem \ref{thm44} gives wave-mode selection mechanism of the shadow system (\ref{47}).  We can easily see that all bifurcation branches other than the first one are unstable; in other words, if a stationary pattern $(v_n(s,x),\lambda_n)$ is stable, it must be on the first branch and is monotone around $(\bar v,\bar \lambda)$.  Similar as the analysis in \cite{Wq}, we can show that all solutions on the continuum of the first bifurcation branch must be monotone.  Therefore, shadow system (\ref{47}) only allows monotone stable solutions, independent of the domain size $L$.  Actually, Ni, Polascik and Yanagida \cite{NPY} have showed that any bounded (may be time-dependent) solution to an autonomous one-dimensional shadow system must be either asymptotically homogeneous or eventually monotone in $x$ in a general setting.

According to (\ref{413}), there always exist positive bifurcating solutions to (\ref{47}) for each $\epsilon>0$ being small.  However, according to Proposition \ref{prop3} and Theorem \ref{thm44}, the small-amplitude bifurcating solutions $(v_1(s,x),\lambda_1(s),\epsilon_1(s))$ are unstable in the strong competition case with $b_1=0$.  Therefore, we are motivated to find positive solutions to (\ref{47}) that have large amplitudes.

\section{Existence of transition-layer solutions to the shadow systems}\label{section5}
In this section, we study the positive solutions to a shadow system in the form of (\ref{47}) that have an interior transition layer.  Without losing much of our generality, we choose $\Phi(v)=v$ in (\ref{47}) and consider the following nonlinear boundary value problem,
\begin{equation}\label{51}
\left\{
\begin{array}{ll}
\epsilon v''+(a_2-b_2\lambda e^{-rv}-c_2v)v=0,&x \in (0,L),     \\
\int_0^L (a_1-b_1\lambda e^{-rv}-c_1v)e^{-rv} dx=0,\\
v'(0)=v'(L)=0,
\end{array}
\right.
\end{equation}
where $v=v_\epsilon(\lambda,x)$ depends $\lambda$ and $\lambda=\lambda_\epsilon$ is a positive constant to be determined.  Our aim of this section is to show that, for $\epsilon$ being sufficiently small, system (\ref{51}) admits solutions with a single transition layer which is represented by an approximation of a step-function.  The transition-layer solution can be used to model, not only the coexistence, but also a more interesting segregation phenomenon of the competing species.  Though we are concerned with $v_\epsilon(x)$ that has only single transition layer over $(0,L)$, one can construct multiple-transition-layer solutions by reflecting and periodically extending $v_\epsilon(x)$ at $x=\pm L,\pm 2L,\pm 3L,...$

Our approach is to construct at first the single transition-layer solution $v_{\epsilon}(\lambda,x)$ to (\ref{51}) for an arbitrarily predetermined $\lambda$.  Then we proceed to find $\lambda=\lambda_\epsilon$ and $v_\epsilon(\lambda_\epsilon,x)$ such that the integral condition is satisfied.  To this end, we first fix some positive $\lambda$ and consider the follow equation
\begin{equation}\label{52}
\left\{
\begin{array}{ll}
\epsilon v''+f(\lambda, v)=0,&x \in (0,L),\\
v(x)>0,& x\in(0,L),\\
v'(0)=v'(L)=0,
\end{array}
\right.
\end{equation}
where we have used the notation \[f(\lambda,v)=(a_2-b_2\lambda e^{-rv}-c_2v)v.\]
It is obvious that $\bar v_0(\lambda)=0$ is a constant solution (\ref{52}).  Integrating the first equation of (\ref{52}) over $(0,L)$, we have
\[\int_0^L f(\lambda, v) dx=0,\]
therefore $f(\lambda, v)$ changes sign in $(0,L)$.  Denoting $\tilde f(\lambda,v)=a_2-b_2\lambda e^{-rv}-c_2v$.  We can observe that $\tilde f(\lambda,v)=0$ has two positive roots if and only if $\tilde f(\lambda,0)<0$ and there exists a positive $v^*$ such that $\tilde f_v(\lambda, v^*)=0$ and $\tilde f(\lambda, v^*)>0$.  Then it follows from straightforward calculations that $f(\lambda,v)=0$ has two positive roots $\bar v_1(\lambda)$, $\bar v_2(\lambda)$ that satisfy
\begin{equation}\label{53}
f(\lambda, \bar v_1 )=f(\lambda, \bar v_2 )=0,~0<\bar v_1 <v^*=\frac{a_2}{c_2}-\frac{1}{r}< \bar v_2,
\end{equation}
provided that
\begin{equation}\label{54}
\lambda \in \Big(\frac{a_2}{b_2},~\frac{c_2}{b_2 r}e^{\frac{a_2 r}{c_2}-1} \Big),~r>\frac{c_2}{a_2}.
\end{equation}
We shall assume (\ref{54}) throughout the rest part of this section.

One can easily observe that the constant solutions $0$ and $\bar v_2$ are stable and $\bar v_1$ is unstable with respect to the time-dependent system of (\ref{52}).  Moreover, for each $\lambda$ satisfying (\ref{54}), $f(\lambda,v)$ is of Allen-Cahn type with $f_v(\lambda,0)<0$ and $f_v(\lambda,\bar v_2)<0$, therefore we have from the phase plane analysis, in  \cite{F} for example, that the following system has a unique smooth solution $V_0(z)$,
\begin{equation}\label{55}
\left\{
\begin{array}{ll}
V_0''+f(\lambda,V_0)=0,~z \in \mathbb{R},    \\
V_0(z) \in (0,\bar v_2(\lambda)),~z\in \mathbb{R},\\
V_0(-\infty)=\bar v_2(\lambda),~V_0(0)=\bar v_2(\lambda)/2,~V_0(\infty)=0;
\end{array}
\right.
\end{equation}
moreover, there exist positive constants $C$ and $\kappa$ that depend on $\lambda$ such that
\begin{equation}\label{56}
\Big\vert \frac{dV_0(z)}{dz}  \Big\vert \leq Ce^{-\kappa \vert z \vert }, ~z\in \mathbb{R}.
\end{equation}
Our construction of the transition-layer solutions $v_\epsilon$ to (\ref{52}) begins with an approximation of a step-function and we follow the idea in \cite{HS} for this purpose.
Denoting $L^*=\min\{x_0,L-x_0\}$ for each fixed $x_0\in(0,L)$, we choose the cut-off functions $\chi_0(y)$ and $\chi_1(y)$ of $C^\infty([-L,L])$ smooth to be
\begin{equation}\label{57}
\chi_0(y)\!=\!
\left\{\!\!
\begin{array}{ll}
1,\!\!&\vert y \vert \leq L^*/4,    \\
0,\!\!&\vert y \vert \geq L^*/2,\\
\in (0,1),\!\!&y\in[-L,L],
\end{array}
\right.
\text{~and~}
\chi_1(y)\!=\!
\left\{\!\!
\begin{array}{ll}
0,\!\!& y \in [-L,0],    \\
\bar v_2(\lambda)(1-\chi_0(y)),\!\!&y \in [0,L].
\end{array}
\right.
\end{equation}
Let
\begin{equation}\label{58}
V_\epsilon(\lambda,x)=\chi_0(x-x_0)V_0\Big(\lambda,\frac{x-x_0}{\sqrt{\epsilon}}\Big)+\chi_1(x-x_0),
\end{equation}
then for each $\lambda \in \Big(\frac{a_2}{b_2}, \frac{c_2}{b_2 r}e^{\frac{a_2 r}{c_2}-1} \Big)$, we want to construct a solution to (\ref{51}) that takes the form
\[v_\epsilon(\lambda,x)=V_\epsilon(\lambda,x)+\sqrt \epsilon \Psi(\lambda,x),\]
then $\Psi$ satisfies
\begin{equation}\label{59}
\mathcal{L}_\epsilon \Psi+\mathcal{P}_\epsilon+\mathcal{Q}_\epsilon=0,
\end{equation}
where
\begin{equation}\label{510}
\mathcal{L}_\epsilon=\epsilon\frac{d^2}{dx^2}+f_v(\lambda,V_\epsilon(\lambda,x)),
\end{equation}
\begin{equation}\label{511}
\mathcal{P}_\epsilon=\epsilon^{-\frac{1}{2}} \Big(\epsilon \frac{d^2 V_\epsilon(\lambda,x)}{dx^2}+f(\lambda,V_\epsilon(\lambda,x)) \Big),
\end{equation}
and
\begin{equation}\label{512}
\begin{array}{ll}
\mathcal{Q}_\epsilon&=\epsilon^{-\frac{1}{2}} \Big(f(\lambda,V_\epsilon(\lambda,x)+\sqrt \epsilon\Psi)-f(\lambda,V_\epsilon(\lambda,x))-\sqrt\epsilon \Psi f_v(\lambda,V_\epsilon(\lambda,x))\Big)\\
&=b_2\lambda e^{-r V_\epsilon}\big(V_\epsilon/\sqrt\epsilon-e^{-r \sqrt\epsilon\Psi}(V_\epsilon/\sqrt\epsilon+\Psi)-\Psi(r V_\epsilon-1)\big)-c_2\sqrt\epsilon \Psi^2.
\end{array}
\end{equation}
We readily see from (\ref{59})--(\ref{512}) that $\mathcal{P}_\epsilon$ and $\mathcal{Q}_\epsilon$ measure the accuracy that $V_\epsilon(\lambda,x)$ approximates solution $v_\epsilon(\lambda,x)$.  Our existence result is a consequence of several lemmas and we first present the following two about $\mathcal{P}_\epsilon$ and $\mathcal{Q}_\epsilon$.
\begin{lemma}\label{lem51}
Assume that $r>\frac{c_2}{a_2}$ and suppose $\lambda \in \Big(\frac{a_2}{b_2}+\delta, \frac{c_2}{b_2 r}e^{\frac{a_2r}{c_2}-1}-\delta \Big)$ for $\delta>0$ small.  Then there exist $C_1=C_1(\delta)>0$ and $\epsilon=\epsilon_1(\delta)>0$ small such that, for all $\epsilon\in(0,\epsilon_1(\delta))$
\[\sup_{x\in(0,L)} \vert\mathcal{P}_\epsilon (x)\vert \leq  C_1.  \]
\end{lemma}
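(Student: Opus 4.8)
The plan is to observe that $\mathcal{P}_\epsilon$ vanishes identically away from the thin region where the cut-offs of (\ref{57})--(\ref{58}) vary, and then to bound it there using the exponential decay (\ref{56}). First I would partition $(0,L)$ into the \emph{layer core} $I_\epsilon=\{\,|x-x_0|<L^*/4\,\}$, where $\chi_0\equiv1$, $\chi_1\equiv0$ and hence $V_\epsilon(\lambda,x)=V_0\big(\lambda,(x-x_0)/\sqrt\epsilon\big)$ exactly; the \emph{far field} $O_\epsilon=\{\,|x-x_0|>L^*/2\,\}$, where $\chi_0\equiv0$ and $V_\epsilon$ is identically one of the constant zeros of $f(\lambda,\cdot)$; and the \emph{matching zone} $M_\epsilon=\{\,L^*/4\le|x-x_0|\le L^*/2\,\}$. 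On $I_\epsilon$, with $z=(x-x_0)/\sqrt\epsilon$ one has $\epsilon\,\partial_{xx}V_\epsilon=V_0''(z)$, so the profile equation (\ref{55}) gives $\epsilon\,\partial_{xx}V_\epsilon+f(\lambda,V_\epsilon)=V_0''(z)+f(\lambda,V_0(z))=0$ and $\mathcal{P}_\epsilon\equiv0$; on $O_\epsilon$, $\partial_{xx}V_\epsilon=0$ and $f(\lambda,V_\epsilon)=0$, so again $\mathcal{P}_\epsilon\equiv0$. Thus only the bound on $M_\epsilon$ is at issue.

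On $M_\epsilon$ one has $|z|\ge L^*/(4\sqrt\epsilon)$, so $V_0(z)$ is exponentially close to the relevant limit $\ell\in\{0,\bar v_2(\lambda)\}$: integrating (\ref{56}) from $z$ to $\pm\infty$ gives $|V_0(z)-\ell|\le (C/\kappa)\,e^{-\kappa|z|}$, and then $|V_0''(z)|=|f(\lambda,V_0(z))-f(\lambda,\ell)|\le C\,e^{-\kappa|z|}$ by the local Lipschitz bound on $f(\lambda,\cdot)$. Expanding,
\[
\epsilon\,\partial_{xx}V_\epsilon=\chi_0\,V_0''(z)+2\sqrt\epsilon\,\chi_0'\,V_0'(z)+\epsilon\,\chi_0''\,V_0(z)+\epsilon\,\chi_1'',
\]
where $\chi_0',\chi_0'',\chi_1''$ are bounded (by constants depending only on the fixed $x_0,L$ and on $\bar v_2(\lambda)$); the first three terms are $O\big(e^{-\kappa L^*/(4\sqrt\epsilon)}\big)$ and the last is $O(\epsilon)$. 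Since on $M_\epsilon$ we also have $V_\epsilon=\ell+\chi_0\big(V_0(z)-\ell\big)$, so that $|V_\epsilon-\ell|\le(C/\kappa)\,e^{-\kappa L^*/(4\sqrt\epsilon)}$ and hence $|f(\lambda,V_\epsilon)|=|f(\lambda,V_\epsilon)-f(\lambda,\ell)|=O\big(e^{-\kappa L^*/(4\sqrt\epsilon)}\big)$, we get $|\epsilon\,\partial_{xx}V_\epsilon+f(\lambda,V_\epsilon)|\le C\epsilon+C\,e^{-\kappa L^*/(4\sqrt\epsilon)}$ on $M_\epsilon$, and therefore
\[
\sup_{x\in(0,L)}|\mathcal{P}_\epsilon(x)|\le \epsilon^{-1/2}\Big(C\epsilon+C\,e^{-\kappa L^*/(4\sqrt\epsilon)}\Big)=C\sqrt\epsilon+C\,\epsilon^{-1/2}e^{-\kappa L^*/(4\sqrt\epsilon)},
\]
which tends to $0$ as $\epsilon\to0$; in particular it is $\le C_1$ for all $\epsilon$ below a threshold $\epsilon_1$.

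It then remains to track the dependence on $\delta$. Restricting $\lambda$ to $\big[\frac{a_2}{b_2}+\delta,\ \frac{c_2}{b_2r}e^{a_2r/c_2-1}-\delta\big]$ confines it to a compact subinterval of the bistable window (\ref{54}); on this subinterval the roots $\bar v_1(\lambda)$, $\bar v_2(\lambda)$ and the profile $V_0(\lambda,\cdot)$ of (\ref{55}) vary continuously and stay away from degeneracy, so the decay constants $C,\kappa$ in (\ref{56}) may be chosen uniformly in $\lambda$, depending only on $\delta$ and the fixed coefficients. Since the cut-offs depend only on $x_0,L$ and on the uniformly bounded value $\bar v_2(\lambda)$, the resulting $C_1,\epsilon_1$ depend only on $\delta$, as claimed.

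The only genuine difficulty is the bookkeeping on the matching zone: one must check that there the ansatz $V_\epsilon$ agrees with a true equilibrium of $f(\lambda,\cdot)$ up to the exponentially small remainder $\chi_0\big(V_0(z)-\ell\big)$ — which is exactly what the correction term $\chi_1$ in (\ref{57}) is built to ensure — and that all decay bounds for $V_0$ are uniform over the admissible range of $\lambda$, so that the singular prefactor $\epsilon^{-1/2}$ in (\ref{511}) does no harm.
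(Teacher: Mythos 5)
Your proof is correct and takes essentially the same route as the paper's: $\mathcal{P}_\epsilon$ vanishes identically on the layer core and the far field by construction, and on the matching annulus the profile equation (\ref{55}) together with the exponential decay (\ref{56}) makes every contribution at worst $O(\epsilon)$ plus exponentially small terms, so the $\epsilon^{-1/2}$ prefactor in (\ref{511}) is harmless. One remark: your identity $V_\epsilon=\ell+\chi_0\big(V_0-\ell\big)$ on the matching zone presupposes the intended reading of (\ref{57}) in which $\chi_1=\bar v_2(\lambda)(1-\chi_0)$ on the side where $V_0\to\bar v_2$ (as printed the two branches of $\chi_1$ are swapped, which would contradict (\ref{515})); this is a typo in the paper rather than a gap in your argument, and your bookkeeping is otherwise consistent with the paper's proof.
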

\begin{lemma}\label{lem52}
Under the same conditions in Lemma \ref{lem51}.  For any $R_0>0$, there exist $C_2=C_2(\delta,R_0)>0$ and $\epsilon_2=\epsilon_2(\delta,R_0)>0$ small such that for all $\epsilon\in (0,\epsilon_2)$,
\begin{equation}\label{513}
\Vert\mathcal{Q}_\epsilon [\Psi_i]\Vert_\infty \leq C_2\sqrt \epsilon \Vert \Psi_i \Vert_\infty,~ \forall \Vert \Psi_i \Vert_\infty\leq R_0
\end{equation}
\begin{equation}\label{514}
\Vert\mathcal{Q}_\epsilon [\Psi_1]-\mathcal{Q}_\epsilon [\Psi_2]\Vert_\infty \leq C_2 \sqrt{\epsilon} \Vert \Psi_1-\Psi_2\Vert_\infty,~ \forall   \Vert \Psi_i \Vert_\infty\leq R_0
\end{equation}
\end{lemma}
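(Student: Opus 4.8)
The plan is to recognize $\mathcal{Q}_\epsilon[\Psi]$ as $\epsilon^{-1/2}$ times the second-order Taylor remainder of the scalar map $v\mapsto f(\lambda,v)$ expanded about $V_\epsilon(\lambda,x)$ and evaluated at $V_\epsilon(\lambda,x)+\sqrt{\epsilon}\,\Psi(x)$. By the mean-value form of the remainder, for every $x$ there is a number $\xi(x)$ lying between $V_\epsilon(\lambda,x)$ and $V_\epsilon(\lambda,x)+\sqrt{\epsilon}\,\Psi(x)$ with
\[
\mathcal{Q}_\epsilon[\Psi](x)=\epsilon^{-1/2}\,\frac{\big(\sqrt{\epsilon}\,\Psi(x)\big)^{2}}{2}\,f_{vv}\big(\lambda,\xi(x)\big)=\frac{\sqrt{\epsilon}}{2}\,\Psi(x)^{2}\,f_{vv}\big(\lambda,\xi(x)\big);
\]
the same identity can also be read off directly from the explicit expression for $\mathcal{Q}_\epsilon$ in (\ref{512}) by expanding $e^{-r\sqrt{\epsilon}\Psi}$ to second order. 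Both assertions of the lemma thus reduce to one ingredient: a bound on $|f_{vv}(\lambda,\cdot)|$ uniform over the range of arguments that can actually occur.

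First I would check that this range sits inside a fixed compact set. From the definition (\ref{58}) of $V_\epsilon$, the properties (\ref{57}) of the cut-offs, and the bound $V_0(\lambda,\cdot)\in(0,\bar v_2(\lambda))$ in (\ref{55}), one verifies that $0\le V_\epsilon(\lambda,x)\le\bar v_2(\lambda)$ for all $x\in[0,L]$; since $\bar v_2(\lambda)$ is continuous in $\lambda$, it is bounded by some $M_0=M_0(\delta)$ for $\lambda$ in the compact interval $\big[\frac{a_2}{b_2}+\delta,\, \frac{c_2}{b_2 r}e^{\frac{a_2 r}{c_2}-1}-\delta\big]$ imposed by the hypotheses inherited from Lemma \ref{lem51}. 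Because $\Vert\sqrt{\epsilon}\,\Psi_i\Vert_\infty\le\sqrt{\epsilon}\,R_0$, choosing $\epsilon_2=\epsilon_2(\delta,R_0)>0$ small forces every intermediate value $\xi(x)$ into the fixed interval $J:=[-1,\,M_0(\delta)+1]$. On the compact set $\big[\frac{a_2}{b_2}+\delta,\, \frac{c_2}{b_2 r}e^{\frac{a_2 r}{c_2}-1}-\delta\big]\times J$ the function $f_{vv}(\lambda,v)$ is continuous, hence bounded by a constant $M=M(\delta,R_0)$ that depends only on $\delta$, $R_0$ and the fixed coefficients of (\ref{51}).

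Given $M$, the estimate (\ref{513}) is immediate from the identity above: $|\mathcal{Q}_\epsilon[\Psi_i](x)|\le\frac{\sqrt{\epsilon}}{2}M\,\Vert\Psi_i\Vert_\infty^{2}\le\frac{M R_0}{2}\,\sqrt{\epsilon}\,\Vert\Psi_i\Vert_\infty$, so $C_2=\frac{M R_0}{2}$ suffices. For (\ref{514}) I would instead subtract the two expressions and apply the fundamental theorem of calculus twice: writing $w_t:=\Psi_2+t(\Psi_1-\Psi_2)$ one obtains
\[
\mathcal{Q}_\epsilon[\Psi_1](x)-\mathcal{Q}_\epsilon[\Psi_2](x)=(\Psi_1-\Psi_2)(x)\int_0^1\Big(f_v\big(\lambda,V_\epsilon+\sqrt{\epsilon}\,w_t\big)-f_v\big(\lambda,V_\epsilon\big)\Big)\,dt,
\]
and since $\Vert w_t\Vert_\infty\le R_0$ and the arguments again remain in $J$ for $\epsilon<\epsilon_2$, the integrand is at most $\sqrt{\epsilon}\,R_0\,M$ in absolute value, giving (\ref{514}) with the same $C_2$ (enlarged by a harmless absolute constant if one insists on identical constants). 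The computations are routine; the only point that needs care is the uniformity of every constant in $x$, in $\lambda$ over the admissible window, and in $\epsilon\in(0,\epsilon_2)$, which is exactly what the $\delta$-margins in the hypotheses buy — they keep $\lambda$ away from the endpoints $\frac{a_2}{b_2}$ and $\frac{c_2}{b_2 r}e^{\frac{a_2 r}{c_2}-1}$, so that $\bar v_2(\lambda)$, hence $J$, hence $M$, can be fixed once and for all, while the $\sqrt{\epsilon}$ sitting in front of $\mathcal{Q}_\epsilon$ supplies the smallness. I do not anticipate any genuine obstacle beyond this bookkeeping.
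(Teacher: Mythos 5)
Your proof is correct, and it reaches the conclusion by a cleaner route than the paper. The paper works directly with the explicit formula (\ref{512}): it expands $e^{-r\sqrt{\epsilon}\Psi}$ to second order as in (\ref{518}), substitutes into $\mathcal{Q}_\epsilon$, cancels the $O(1)$ and $O(\Psi)$ terms by hand to arrive at $\mathcal{Q}_\epsilon=\sqrt{\epsilon}\bigl(b_2\lambda e^{-rV_\epsilon}(r\Psi^2-\tfrac{r^2}{2}\Psi^2V_\epsilon)-c_2\Psi^2\bigr)+o(\sqrt{\epsilon}\Psi)$, and for (\ref{514}) subtracts the two explicit expressions and estimates term by term. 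You instead observe that (\ref{512}) is exactly $\epsilon^{-1/2}$ times the first-order Taylor remainder of $v\mapsto f(\lambda,v)$ at $V_\epsilon$, so the Lagrange form gives $\mathcal{Q}_\epsilon[\Psi]=\tfrac{\sqrt{\epsilon}}{2}\Psi^2 f_{vv}(\lambda,\xi)$ and the fundamental theorem of calculus gives the Lipschitz bound, with everything reduced to a single uniform bound on $f_{vv}$ over a compact set (which your verification $0\le V_\epsilon\le\bar v_2(\lambda)$, plus the $\delta$-margins on $\lambda$ and $\sqrt{\epsilon}R_0$-smallness of the perturbation, supplies). What your approach buys is generality and rigor: it works for any $C^2$ nonlinearity, not just this particular exponential-polynomial $f$, and it sidesteps the paper's slightly informal $o(\epsilon\Psi^2)$ and $o(\sqrt{\epsilon}\Psi)$ bookkeeping, whose uniformity in $x$, $\lambda$ and $\Psi\in\mathcal{B}$ is asserted rather than tracked. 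What the paper's computation buys is an explicit leading-order coefficient for $\mathcal{Q}_\epsilon$, which is not needed for the lemma itself. The only caveats in your write-up are cosmetic: the intermediate point $\xi(x)$ need not depend measurably on $x$, but since you only use the pointwise bound and then take the supremum this is harmless; and your parenthetical claim that the second-order expansion of the exponential reproduces the "same identity" really reproduces the paper's leading-order formula rather than the exact Lagrange identity, but nothing in your argument relies on that remark.
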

We also need the following Lemma in our existence arguments.
\begin{lemma}\label{lem53}
Under the same conditions in Lemma \ref{lem51}.  For any $p\in[1,\infty]$, there exist $C_3=C_3(\delta,p)>0$ and $\epsilon_3=\epsilon_3(\delta,p)>0$ small such that for all $\epsilon \in (0,\epsilon_3(\delta,p))$, $\mathcal{L}_\epsilon$ with domain $W^{2,p}(0,L)$ has a bounded inverse $\mathcal{L}^{-1}_\epsilon$ and
\[ \Vert \mathcal{L}^{-1}_\epsilon g \Vert_p \leq C_3 \Vert g \Vert_p ,~\forall g\in L^p(0,L).\]
\end{lemma}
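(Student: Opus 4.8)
The plan is to exploit the singular-perturbation structure of $\mathcal L_\epsilon$ in (\ref{510}) through a localization argument, dividing $(0,L)$ into an \emph{outer} region on which $V_\epsilon$ is close to one of the stable roots $0,\bar v_2$ of the reaction term and an \emph{inner} region on which $V_\epsilon$ coincides with the stretched layer profile $V_0$. A first reduction: $\mathcal L_\epsilon=\epsilon\frac{d^2}{dx^2}+f_v(\lambda,V_\epsilon)$, acting with the Neumann conditions, is a relatively compact perturbation of the isomorphism $\epsilon\frac{d^2}{dx^2}-1$ on $W^{2,p}(0,L)$ (Rellich compactness of $W^{2,p}\hookrightarrow L^p$ and boundedness of $f_v(\lambda,V_\epsilon)$), hence Fredholm of index zero; so it suffices to prove the a priori bound $\|w\|_{L^p}\le C_3\|\mathcal L_\epsilon w\|_{L^p}$ uniformly in $\epsilon\in(0,\epsilon_3)$, which forces injectivity and, with index zero, bijectivity together with $\|\mathcal L^{-1}_\epsilon\|\le C_3$. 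It is convenient to rescale $z=(x-x_0)/\sqrt\epsilon$, turning $\mathcal L_\epsilon$ into $\widehat{\mathcal L}_\epsilon:=\frac{d^2}{dz^2}+f_v\bigl(\lambda,\widehat V_\epsilon(z)\bigr)$ on $I_\epsilon=\bigl(-x_0/\sqrt\epsilon,\,(L-x_0)/\sqrt\epsilon\bigr)$ with Neumann endpoints, where $\widehat V_\epsilon(z)=V_\epsilon(\lambda,x_0+\sqrt\epsilon z)$; since $x=x_0+\sqrt\epsilon z$ scales the $L^p$ norm by the same factor $\epsilon^{1/(2p)}$ on both sides, the claim becomes the $\epsilon$-uniform estimate $\|w\|_{L^p(I_\epsilon)}\le C_3\|\widehat{\mathcal L}_\epsilon w\|_{L^p(I_\epsilon)}$ as $|I_\epsilon|\to\infty$.

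Next I would localize with a smooth partition $\eta_{\mathrm{in}}^2+\eta_{\mathrm{out}}^2\equiv1$, $\operatorname{supp}\eta_{\mathrm{in}}\subset\{|z|\le 2R\}$, $\operatorname{supp}\eta_{\mathrm{out}}\subset\{|z|\ge R\}$, with $R$ a large fixed radius. On the outer set, the construction (\ref{57})--(\ref{58}) and the exponential decay (\ref{56}) give that $\widehat V_\epsilon(z)$ differs from one of $0,\bar v_2$ by at most $Ce^{-\kappa R}$, and since $f_v(\lambda,0)<0,\ f_v(\lambda,\bar v_2)<0$ uniformly for $\lambda$ in the compact range of Lemma \ref{lem51}, one may fix $R$ so large that $f_v(\lambda,\widehat V_\epsilon)\le-2c_0<0$ throughout the outer set for all small $\epsilon$; then $\widehat{\mathcal L}_\epsilon$ there differs from $\frac{d^2}{dz^2}-2c_0$ by a coefficient bounded by $c_0$, an operator whose $L^p$-inverse on any interval is bounded uniformly in the length (maximum principle plus standard $L^p$ elliptic estimates), yielding $\|\eta_{\mathrm{out}}w\|_{L^p}\le C\|\widehat{\mathcal L}_\epsilon w\|_{L^p}+C\|w\|_{L^p(R\le|z|\le2R)}$, the last term coming from the commutator $[\widehat{\mathcal L}_\epsilon,\eta_{\mathrm{out}}]$, supported in the \emph{fixed} annulus $R\le|z|\le2R$.

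For the inner region, on $\{|z|\le 2R\}$ and for $\epsilon$ small the profile $\widehat V_\epsilon$ equals $V_0$, so $\widehat{\mathcal L}_\epsilon$ reduces there to the $\epsilon$-independent operator $\mathcal L_0:=\frac{d^2}{dz^2}+f_v(\lambda,V_0(z))$, and what is needed is $\|\eta_{\mathrm{in}}w\|_{L^p}\le C\bigl(\|\mathcal L_0(\eta_{\mathrm{in}}w)\|_{L^p}+\|w\|_{L^p(R\le|z|\le2R)}\bigr)$. \emph{This is the crux and the step I expect to be the main obstacle}: differentiating (\ref{55}) gives $\mathcal L_0V_0'=0$, with $V_0'$ sign-definite and, by (\ref{56}), exponentially decaying, so $0$ is the top of the spectrum of $\mathcal L_0$ on the whole line and the translation mode $V_0'$ is a genuine bounded null solution there; the decisive feature is that $V_0'$ does not satisfy the Neumann conditions at the stretched endpoints, so on $I_\epsilon$ the kernel is trivial, but this must be made quantitative. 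I would argue by compactness and contradiction: were the inner bound to fail there would be $\epsilon_j\downarrow0$ and $w_j$ with $\|w_j\|_{L^p(I_{\epsilon_j})}=1$ and $\widehat{\mathcal L}_{\epsilon_j}w_j\to0$ in $L^p$; uniform elliptic estimates give $\epsilon$-independent $W^{2,p}$, hence $L^\infty$, bounds, and after extracting a subsequence $w_j\to w_\infty$ in $C^1_{\mathrm{loc}}$ with $\mathcal L_0w_\infty=0$ on $\mathbb R$ and $w_\infty$ bounded. If $w_\infty\equiv0$, the outer estimate with $R$ fixed forces $\|w_j\|_{L^p}\to0$, a contradiction, so $w_\infty\neq0$; since the bounded solution space of $\mathcal L_0w=0$ on $\mathbb R$ is exactly $\operatorname{span}\{V_0'\}$ (the second independent solution growing exponentially), $w_\infty=\tau V_0'$ with $\tau\neq0$. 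The remaining and most delicate point is to derive a contradiction from this — that is, to show the translation mode cannot be captured in the limit — by tracking $w_j$ through the transition zone $R\le|z|\le2R$ and exploiting the Neumann data at the ends of $I_\epsilon$ together with the specific cut-off design (\ref{57})--(\ref{58}), in the spirit of the transversality verifications of Section \ref{section4}. Granting that, combining the inner and outer estimates and absorbing the common annulus term back through the outer coercivity gives $\|w\|_{L^p(I_\epsilon)}\le C_3\|\widehat{\mathcal L}_\epsilon w\|_{L^p(I_\epsilon)}$, and undoing the rescaling yields Lemma \ref{lem53}.
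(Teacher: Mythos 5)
Your overall architecture (argue by contradiction, rescale to $z=(x-x_0)/\sqrt{\epsilon}$, pass to the whole-line operator $\mathcal{L}_0=\frac{d^2}{dz^2}+f_v(\lambda,V_0(z))$, and use that its bounded null space is $\mathrm{span}\{V_0'\}$) is the same skeleton as the paper's, but you have left open exactly the step that carries all the weight: excluding the possibility that the limiting function is a nonzero multiple of the translation mode $V_0'$. Writing ``granting that'' at this point means the lemma is not proved. Everything preceding it --- Fredholm index zero, outer coercivity from $f_v(\lambda,0)<0$ and $f_v(\lambda,\bar v_2)<0$, identification of the bounded solutions of $\mathcal{L}_0w=0$ --- is routine; the entire content of the lemma sits in the step you defer.

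The paper closes that step by a different, more hands-on mechanism than the one you gesture at. It normalizes the putative kernel elements in $L^\infty$ (not $L^p$), $\sup_x\Phi_i(x)=1$ attained at $x_i$, and notes that at such a maximum the equation (\ref{522}) and the maximum principle force $f_v(\lambda_i,V_{\epsilon_i}(\lambda_i,x_i))\ge 0$; since $f_v(\lambda,\cdot)<0$ at both stable roots and $V_{\epsilon_i}$ is exponentially close to one of them off the layer, this pins $z_i=(x_i-x_0)/\sqrt{\epsilon_i}$ in a bounded set, so the limit $\tilde\Phi_0$ attains the value $1$ at an interior point $z_0$ with $\tilde\Phi_0'(z_0)=0$. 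Multiplying (\ref{523}) by $V_0'$ and (\ref{524}) by $\tilde\Phi_0$, subtracting and integrating over $(-\infty,z_0)$, the boundary terms at $-\infty$ vanish by (\ref{56}) and the identity collapses to $\tilde\Phi_0(z_0)V_0''(z_0)=0$, contradicting $\tilde\Phi_0(z_0)=1$ provided $V_0''(z_0)\neq0$. So the decisive ingredients are the $L^\infty$ normalization, the maximum-principle localization of the extremum inside the layer, and the Wronskian identity with $V_0'$ --- not the Neumann data at the far endpoints of $I_\epsilon$, as you conjecture. (Your unease is not misplaced: the paper's final step rests on the assertion $V_0''(z_0)\neq0$, and if $\tilde\Phi_0$ were proportional to $V_0'$ its maximum would sit precisely where $V_0''$ vanishes, so this is genuinely the delicate point of the whole lemma.) A secondary structural difference: the paper never proves a localized $L^p$ a priori estimate; once the kernel is trivial it gets the uniform bound on $\mathcal{L}_\epsilon^{-1}$ only for $p=2$, via a uniform gap for the eigenvalue problem (\ref{525}), and then passes to all $p\in[1,\infty]$ by Marcinkiewicz interpolation. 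If you keep your partition-of-unity scheme, you must still import the max-point/Wronskian argument (or an equivalent quantitative nondegeneracy statement) to close the inner estimate.
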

In light of Lemmas \ref{lem51}--\ref{lem53}, we can prove the following existence results on the transition-layer solutions to (\ref{52}).
\begin{proposition}\label{prop4}
Assume that (\ref{54}) is satisfied.  Let $x_0$ be an arbitrary point in $(0,L)$.  Then for each $\delta>0$ being small and any $\lambda \in \Big(\frac{a_2}{b_2}+\delta, \frac{c_2}{b_2 r}e^{\frac{a_2r}{c_2}-1}-\delta \Big)$, there exists a small $\epsilon_4=\epsilon_4(\delta)>0$ such that for all $\epsilon\in (0,\epsilon_4(\delta))$, (\ref{52}) has a family of solution $v_\epsilon(\lambda,x)$ such that
\[\sup_{x\in(0,L)} \vert v_\epsilon(\lambda,x)-V_\epsilon(\lambda,x) \vert \leq C_4\sqrt \epsilon,\] where $C_4$ is a positive constant independent of $\epsilon$.  In particular,
\begin{equation}\label{515}
\lim_{\epsilon \rightarrow 0^+} v_\epsilon(\lambda,x)=
\left\{
\begin{array}{ll}
\bar v_2(\lambda), \text{ compact uniformly on } [0,x_0)  ,    \\
\bar v_2(\lambda)/2,~x=x_0,\\
0,\text{ compact uniformly on } (x_0,L].
\end{array}
\right.
\end{equation}
\end{proposition}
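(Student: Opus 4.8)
The plan is to build $v_\epsilon(\lambda,\cdot)$ by a contraction--mapping argument, viewing it as the perturbation $v_\epsilon=V_\epsilon(\lambda,\cdot)+\sqrt\epsilon\,\Psi$ of the approximate profile $V_\epsilon$ in (\ref{58}). Substituting this ansatz into (\ref{52}) shows that $\Psi$ must solve (\ref{59}), i.e. $\mathcal L_\epsilon\Psi=-(\mathcal P_\epsilon+\mathcal Q_\epsilon[\Psi])$ with $\mathcal L_\epsilon,\mathcal P_\epsilon,\mathcal Q_\epsilon$ as in (\ref{510})--(\ref{512}). By Lemma \ref{lem53} with $p=\infty$, for $\epsilon$ small $\mathcal L_\epsilon$ has a bounded inverse on $L^\infty(0,L)$ with $\|\mathcal L_\epsilon^{-1}g\|_\infty\le C_3\|g\|_\infty$, so solving (\ref{59}) amounts to finding a fixed point of
\[
\mathcal G_\epsilon[\Psi]:=-\,\mathcal L_\epsilon^{-1}\bigl(\mathcal P_\epsilon+\mathcal Q_\epsilon[\Psi]\bigr)
\]
in $X:=C([0,L])$; note that $\mathcal Q_\epsilon[\Psi]$ involves $\Psi$ only pointwise (it is algebraic in $\Psi$, cf. (\ref{512})), so $\mathcal G_\epsilon$ is well defined on $X$, with values in $W^{2,\infty}(0,L)\subset X$, and, since both $V_\epsilon$ and every element of the range of $\mathcal L_\epsilon^{-1}$ satisfy the homogeneous Neumann conditions, so will the resulting $v_\epsilon$.

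I would then fix $\delta>0$, take $\lambda\in\bigl(\tfrac{a_2}{b_2}+\delta,\ \tfrac{c_2}{b_2r}e^{a_2r/c_2-1}-\delta\bigr)$, and set $R_0:=2C_3C_1$ with $C_1=C_1(\delta)$ from Lemma \ref{lem51} and $C_3$ from Lemma \ref{lem53}. On the closed ball $\overline{B_{R_0}}=\{\Psi\in X:\|\Psi\|_\infty\le R_0\}$, Lemmas \ref{lem51}--\ref{lem53} give
\[
\|\mathcal G_\epsilon[\Psi]\|_\infty\le C_3\bigl(\|\mathcal P_\epsilon\|_\infty+\|\mathcal Q_\epsilon[\Psi]\|_\infty\bigr)\le C_3\bigl(C_1+C_2\sqrt\epsilon\,R_0\bigr)\le R_0
\]
as soon as $C_2C_3\sqrt\epsilon\le\tfrac12$, and, using (\ref{514}) (whose $\mathcal P_\epsilon$--term cancels in the difference),
\[
\|\mathcal G_\epsilon[\Psi_1]-\mathcal G_\epsilon[\Psi_2]\|_\infty\le C_3\|\mathcal Q_\epsilon[\Psi_1]-\mathcal Q_\epsilon[\Psi_2]\|_\infty\le C_2C_3\sqrt\epsilon\,\|\Psi_1-\Psi_2\|_\infty\le\tfrac12\|\Psi_1-\Psi_2\|_\infty .
\]
Hence for all $\epsilon\in(0,\epsilon_4(\delta))$ with $\epsilon_4(\delta):=\min\{\epsilon_1,\epsilon_2,\epsilon_3,(2C_2C_3)^{-2}\}$, the Banach fixed point theorem yields a unique $\Psi=\Psi_\epsilon(\lambda,\cdot)\in\overline{B_{R_0}}$; then $v_\epsilon:=V_\epsilon+\sqrt\epsilon\,\Psi_\epsilon$ satisfies $\epsilon v_\epsilon''+f(\lambda,v_\epsilon)=0$ with $v_\epsilon'(0)=v_\epsilon'(L)=0$ and $\sup_{x\in(0,L)}|v_\epsilon(\lambda,x)-V_\epsilon(\lambda,x)|\le R_0\sqrt\epsilon=:C_4\sqrt\epsilon$, and an elliptic bootstrap on $\mathcal L_\epsilon^{-1}$ makes $v_\epsilon$ classical.

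It remains to verify $v_\epsilon>0$ on $(0,L)$ and the limiting profile (\ref{515}). For positivity, note that $\lambda>\tfrac{a_2}{b_2}$ forces $\tilde f(\lambda,v)=a_2-b_2\lambda e^{-rv}-c_2v<0$ for all $v\le 0$ near $0$, hence $f(\lambda,v)=v\,\tilde f(\lambda,v)>0$ there; therefore $v_\epsilon$ cannot attain a negative interior minimum, and the Neumann conditions exclude a negative boundary minimum, so $v_\epsilon\ge 0$; if $v_\epsilon$ vanished at some point it would vanish identically by the strong maximum principle, which is impossible since $v_\epsilon(x_0)=V_\epsilon(x_0)+\sqrt\epsilon\,\Psi_\epsilon(x_0)=\tfrac{\bar v_2(\lambda)}{2}+\sqrt\epsilon\,\Psi_\epsilon(x_0)>0$ for $\epsilon$ small, whence $v_\epsilon>0$. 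Finally, (\ref{515}) follows at once from $\|v_\epsilon-V_\epsilon\|_\infty\le C_4\sqrt\epsilon\to0$, the definition (\ref{58}), and the exponential decay (\ref{56}): on any compact subset of $[0,x_0)$ (resp. of $(x_0,L]$) the stretched variable $\tfrac{x-x_0}{\sqrt\epsilon}\to-\infty$ (resp. $+\infty$) uniformly, so $V_\epsilon\to\bar v_2(\lambda)$ (resp. $0$) there, while $V_\epsilon(x_0)=V_0(0)=\bar v_2(\lambda)/2$. The point requiring the most care is keeping the contraction set-up uniform in $\epsilon$, namely that the nonlinearity $\mathcal Q_\epsilon$ contribute only an $O(\sqrt\epsilon)$ Lipschitz constant---this is exactly why the ansatz carries the prefactor $\sqrt\epsilon$ and why Lemma \ref{lem52} is stated in that scaled form---after which the fixed-point step is routine; the one genuinely separate ingredient is the strict positivity of $v_\epsilon$, which the fixed-point construction does not supply by itself.
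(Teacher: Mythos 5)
Your proposal is correct and follows essentially the same route as the paper: the ansatz $v_\epsilon=V_\epsilon+\sqrt\epsilon\,\Psi$, the map $-\mathcal L_\epsilon^{-1}(\mathcal P_\epsilon+\mathcal Q_\epsilon[\Psi])$ on the ball of radius $R_0=2C_1C_3$, and the Banach fixed point theorem via Lemmas \ref{lem51}--\ref{lem53}. You in fact go slightly further than the paper's write-up by explicitly checking the positivity of $v_\epsilon$ and the limit (\ref{515}), which the paper dismisses as easy to verify.
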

\begin{proof}
We shall establish the existence of $v_\epsilon$ in the form of $v_\epsilon=V_\epsilon+\sqrt \epsilon \Psi$, where $V_\epsilon$ is defined in (\ref{58}).  For this purpose, it is equivalent to show the existence of smooth functions $\Psi$ and we shall to apply the Fixed Point Theorem to this end.  For all $\Psi\in \mathcal{X}$, we define
\begin{equation}\label{516}
\mathcal{S}_\epsilon[\Psi]=-\mathcal{L}^{-1}_\epsilon(\mathcal{P}_\epsilon+\mathcal{Q}[\Psi]).
\end{equation}
Then $\mathcal{S}_\epsilon$ is mapping from $C([0,L])$ to $C([0,L])$ according to elliptic regularities.  Moreover, we choose
\[\mathcal{B}=\{\Psi\in \mathcal{X} ~\vert~\Vert \Psi \Vert_\infty \leq R_0 \},\]
where $R_0\geq 2C_1C_3$.  By Lemma \ref{lem51} and \ref{lem53}, we have $\Vert \mathcal{L}^{-1} \mathcal{P}_\epsilon \Vert_\infty\leq C_1C_3$.  Therefore, it follows from Lemma \ref{lem52} that
\[\Vert \mathcal{S}_\epsilon[\Psi]\Vert_\infty \leq C_1C_3+C_2C_3\sqrt{\epsilon} R_0\leq R_0,~\forall \Psi\in \mathcal{B},\]
provided that $\epsilon$ is small.  Moreover, it follows from Lemma \ref{lem52} that for $\epsilon$ being sufficiently small,
\[\Vert \mathcal{S}_\epsilon[\Psi_1]-\mathcal{S}_\epsilon[\Psi_2]\Vert_\infty \leq \frac{1}{2} \Vert \Psi_1-\Psi_2 \Vert_\infty,~\forall \Psi_1,\Psi_2 \in \mathcal{B},\]
hence $\mathcal{S}_\epsilon$ is a contraction mapping on $\mathcal{B}$ for all small positive $\epsilon$, then it follows from the Banach Fixed Point Theorem that $\mathcal{S}_\epsilon$ has a fixed point $\Psi_\epsilon$ in $\mathcal{B}$, which is apparently a smooth solution of (\ref{59}).  Therefore $v_\epsilon=V_\epsilon+\sqrt{\epsilon}\Psi_\epsilon$ is a smooth solution of (\ref{52}).  Moreover, it is easy to verify that $v_\epsilon$ satisfies (\ref{515}) and this finishes the proof of Proposition \ref{prop4}.
\end{proof}

\begin{proof}[Proof\nopunct]\emph{of Lemma} \ref{lem51}
Substituting $V_\epsilon(\lambda,x)=\chi_0(x-x_0)V_0(\lambda,\frac{x-x_0}{\sqrt{\epsilon}})+\chi_1(x-x_0)$ into $\mathcal{P}_\epsilon (x)$ in (\ref{511}), we collect from the $V_0$ equation that
\begin{eqnarray}\label{517}
&&\epsilon\frac{d^2V_\epsilon(\lambda,x)}{dx^2}\!+\!f(\lambda,V_\epsilon(\lambda,x))\!\!\!\!\! \nonumber\\
&=&\!\!\!\!f\big(\lambda,V_\epsilon\big)\!-\!\chi_0f\big(\lambda,V_0\big)\!+\!\epsilon\chi''_0V_0+2\sqrt{\epsilon}\chi'_0V'_0\!+\!\epsilon \chi''_1\nonumber\\
&=&\!\!\!\!f\big(\lambda,V_\epsilon\big)\!-\!\chi_0f\big(\lambda,V_0\big)\!+\!\mathcal{O}(\sqrt{\epsilon})\nonumber\\
&=&\!\!\!\!f\big(\lambda,\chi_0(x-x_0)V_0(\lambda,(x-x_0)/\sqrt{\epsilon})\!+\!\chi_1(x-x_0)\big)\nonumber\\
&&-\chi_0f\big(\lambda,V_0(\lambda,(x-x_0)/\sqrt{\epsilon}) \big)+\mathcal{O}(\sqrt{\epsilon}),
\end{eqnarray}
where $\mathcal{O}(\sqrt{\epsilon})$ is taken with respect to $L^\infty$-norm.

We claim that $\vert f\big(\lambda,V_\epsilon\big)-\chi_0f\big(\lambda,V_0\big) \vert=\mathcal{O}(\sqrt{\epsilon})$ in (\ref{517}) and our discussions are divided into the following cases.  If $\vert x-x_0 \vert \leq L^*/4$, or $x-x_0\geq L^*/2$, or $x-x_0\leq -L^*/2$, we can have from (\ref{57}) that $f\big(\lambda,V_\epsilon\big)-\chi_0f\big(\lambda,V_0\big)=0$.  If $\vert x-x_0 \vert \in (L^*/4, L^*/2)$, since $V_0(z)$ decays exponentially to $0$ at $\infty$ and to $\bar v_2$ at $-\infty$, there exists $C>0$ uniform in $\epsilon$ such that
\[\vert f\big(\lambda,V_\epsilon\big)-\chi_0f\big(\lambda,V_0\big) \vert \leq C\sqrt{\epsilon}.\]
This proves our claim and Lemma \ref{lem51} follows from (\ref{517}).
\end{proof}
\begin{proof}[Proof\nopunct]\emph{of Lemma} \ref{lem52}.  First of all, we have the Taylor series
\begin{equation}\label{518}
e^{-r\sqrt{\epsilon}\Psi}=1-r\sqrt{\epsilon}\Psi+\frac{\big(r\sqrt{\epsilon}\Psi\big)^2}{2}+o(\epsilon \Psi^2)
\end{equation}
where $o(\epsilon \Psi^2)$ is taken in the $L^\infty$-norm.  Substituting (\ref{518}) into (\ref{512}), we collect
\begin{eqnarray}\label{519}
\mathcal{Q}_\epsilon \!\!\!\!\!&=&\!\!\!\!\!b_2\lambda e^{-rV_\epsilon}\Big(V_\epsilon\big(1-e^{-r\sqrt{\epsilon}\Psi}\big)/\sqrt{\epsilon}-e^{-r\sqrt{\epsilon}\Psi}
\Psi-\Psi r V_\epsilon+\Psi\Big)-c_2\sqrt{\epsilon}\Psi^2\nonumber\\
\!\!\!\!\!&=&\!\!\!\!\!b_2\lambda e^{-rV_\epsilon}\Big(V_\epsilon\big(r\sqrt{\epsilon}\Psi-\frac{r^2\epsilon\Psi^2}{2}-o(\epsilon \Psi^2)\big)/\sqrt{\epsilon} -\Psi
\big(1-r\sqrt{\epsilon} \Psi+\frac{r^2\epsilon\Psi^2}{2}+o(\epsilon \Psi^2)\big)\nonumber\\
&&-rV_\epsilon\Psi+\Psi\Big)-c_2\sqrt{\epsilon}\Psi^2\nonumber\\
\!\!\!\!\!&=&\!\!\!\!\!b_2\lambda e^{-rV_\epsilon}\big(r\sqrt{\epsilon}\Psi^2-\frac{r^2\sqrt{\epsilon}\Psi^2}{2}V_\epsilon\big)-c_2\sqrt{\epsilon}\Psi^2+o(\sqrt{\epsilon} \Psi)\nonumber\\
\!\!\!\!\!&=&\!\!\!\!\!\sqrt{\epsilon}\Big(b_2\lambda e^{-rV_\epsilon}\big(r\Psi^2-\frac{r^2\Psi^2}{2}V_\epsilon\big)-c_2\Psi^2\Big)+o(\sqrt{\epsilon} \Psi),
\end{eqnarray}
then we have from (\ref{519}) that
\begin{eqnarray}\label{520}
\Vert \mathcal{Q}_\epsilon [\Psi_i]\Vert_\infty \!\!\!\!\!&\leq&\!\!\!\!\! 2\sqrt{\epsilon}\Big(b_2\lambda r\Vert \Psi^2 \Vert_\infty+\frac{r^2}{2}\Vert V_\epsilon\Psi^2 \Vert_\infty+c_2\Vert \Psi^2\Vert_\infty\Big)\nonumber\\
\!\!\!\!\!&\leq& \!\!\!\!\!2\sqrt{\epsilon}\big(b_2\lambda r+\frac{r}{2}\Vert V_\epsilon\Vert_\infty+c_2\big)R_0\Vert\Psi_i\Vert_\infty;
\end{eqnarray}
on the other hand, for any $\Psi_1$ and $\Psi_2$ in $\mathcal B$, we have that
\begin{align}\label{521}
&\Vert\mathcal{Q}_\epsilon [\Psi_1]\!-\!\mathcal{Q}_\epsilon [\Psi_2]\Vert_\infty\nonumber\\
=&\Vert b_2\lambda e^{-rV_\epsilon}\Big( \big(\frac{V_\epsilon}{\sqrt\epsilon}+\Psi_2\big)  e^{-r\sqrt {\epsilon} \Psi_2}-\big(\frac{V_\epsilon}{\sqrt{\epsilon}}+\Psi_1\big)  e^{-r\sqrt {\epsilon} \Psi_1}+(r V_\epsilon-1)(\Psi_2-\Psi_1)\Big)\nonumber\\
  &+c_2\sqrt {\epsilon } (\Psi_2^2-\Psi_1^2)\Vert_\infty \nonumber\\
\leq& b_2\lambda \Vert  \big(\frac{V_\epsilon}{\sqrt\epsilon}+\Psi_2\big)  e^{-r\sqrt {\epsilon} \Psi_2}-\big(\frac{V_\epsilon}{\sqrt{\epsilon}}+\Psi_1\big)  e^{-r\sqrt {\epsilon} \Psi_1} +(r V_\epsilon-1)(\Psi_2-\Psi_1)\Vert_\infty \nonumber \\
  &+c_2\sqrt{\epsilon} \Vert\Psi_2+\Psi_1 \Vert_\infty \Vert\Psi_2-\Psi_1 \Vert_\infty  \\
\leq& b_2\lambda \max_{\Psi \in \mathcal B}\Vert (1-rV_\epsilon)(-r \Psi+\frac{\sqrt{\epsilon} r^2}{2}\Psi^2+o(\sqrt{\epsilon}\Psi^2))-r \Psi e^{-r\sqrt {\epsilon} \Psi} \Vert_\infty  \sqrt{\epsilon} \Vert\Psi_2-\Psi_1 \Vert_\infty\nonumber \\
&+ 2c_2R_0\sqrt{\epsilon}\Vert\Psi_2-\Psi_1 \Vert_\infty, \nonumber
\end{align}
then we see that (\ref{513}) and (\ref{514}) follow from (\ref{520}) and (\ref{521}) respectively.
\end{proof}

\begin{proof}[Proof\nopunct]\emph{of Lemma} \ref{lem53}.
To show that $\mathcal{L}_\epsilon$ in (\ref{510}) is invertible, it is sufficient to prove that $\mathcal{L}_\epsilon$ defined on $L^p(0,L)$ with the domain $W^{2,p}(0,L)$ has only trivial kernel.  Our proof is quite similar as that of Lemma 5.4 in \cite{LN2} given by Lou and Ni.  We argue by contradiction.  Choose a sequence $\{(\epsilon_i,\lambda_i)\}_{i=1}^\infty$ with $\epsilon_i \rightarrow 0$ and $\lambda_i \rightarrow \lambda \in \Big(\frac{a_2}{b_2}+\delta, \frac{c_2}{b_2 r}e^{\frac{a_2}{c_2 r}-1}-\delta \Big)$ as $i\rightarrow \infty$.  Suppose that there exists $\Phi_i\in W^{2,p}(0,L)$ satisfying
\begin{equation}\label{522}
\left\{
\begin{array}{ll}
\epsilon_i \frac{d^2\Phi_{i}}{dx^2}+f_v(\lambda_i,V_{\epsilon_i}(\lambda_i,x))\Phi_i=0,x\in(0,L),\\
\Phi_i'(0)=\Phi_i'(L)=0,\\
\sup_{x\in(0,L)}   \Phi_i(x)  =1.
\end{array}
\right.
\end{equation}
Let us denote
\[\tilde{\Phi}_i(z)=\Phi_i(x_0+\sqrt{\epsilon_i}z),~\tilde V_{\epsilon_i}(\lambda_i,z)=V_{\epsilon_i}(\lambda_i,x_0+\sqrt{\epsilon_i}z),\]
for all $z\in\big(x_0-\frac{1}{\sqrt{\epsilon_i}},x_0+\frac{1}{\sqrt{\epsilon_i}}\big)$, $i\in \mathbb N^+$, then we see that
\[\frac{d^2\tilde\Phi_{i}}{dz^2}+f_v(\lambda_{i},\tilde V_{\epsilon_i}(\lambda_{i},z))\tilde\Phi_{i}=0,~z\in \Big(x_0-\frac{1}{\sqrt{\epsilon_i}},x_0+\frac{1}{\sqrt{\epsilon_i}}\Big).\]
Thanks to (\ref{54}) and (\ref{57}), it is easy to see that both $f_v(\lambda_i, \tilde V_{\epsilon_i})$ and $\tilde\Phi_i$ are bounded for all $i\in \mathbb N^+$, therefore, we can apply the standard elliptic regularity and the diagonal argument that, after passing to a subsequence if necessary as $i \rightarrow \infty$, $\tilde \Phi_i \rightarrow \tilde \Phi_0$ in $C^1(\mathbb{R}_c)$ for any compact subset $\mathbb{R}_c$ of $\mathbb{R}$; moreover, $\tilde\Phi_0$ is a $C^\infty$-smooth function that satisfies
\begin{equation}\label{523}
\frac{d^2\tilde\Phi_0 }{dz^2}+f_v(\lambda,V_0(\lambda,z))\tilde\Phi_0=0,z\in \mathbb{R},
\end{equation}
where $V_0(\lambda,z)$ is the unique solution of (\ref{55}).  Let $x_i\in[0,L]$ such that $\Phi_i(x_i)=1$.  Then we have that $f_v(\lambda_i,V_{\epsilon_i}(\lambda,x_i))\geq 0$ according to the Maximum Principle.  We claim that $\vert z_i\vert =\vert \frac{x_i-x_0}{\sqrt{\epsilon_i}}\vert \leq C_0 $ for some $C_0$ independent of $\epsilon_i$.  Suppose otherwise and there exists a sequence $z_i \rightarrow \pm \infty$ as $\epsilon_i\rightarrow 0$, then it follows from the definition of $V_{\epsilon_i}$ in (\ref{58}) that
$\tilde V_{\epsilon_i}\big(\lambda, z_i \big) \rightarrow \bar v_2$ and $0$ respectively.  On the other hand, we recall that $f_v(\lambda, \bar v_2)<0$ and $f_v(\lambda, 0)<0$, therefore $f_v(\lambda,V_{\epsilon_i} (\lambda, x_i ))<0$ for $\epsilon_i$ being sufficiently small and we reach a contradiction, hence $\vert z_i\vert$ is bounded for all $\epsilon_i$ being sufficiently small.  Now as $\epsilon_i \rightarrow 0$, one has that $z_i\rightarrow z_0$ for some $z_0\in\mathbb{R}$ such that
\[\tilde\Phi_0(z_0)=\sup_{z\in\mathbb{R}}\tilde\Phi_0(z)=1,~\tilde\Phi_0'(z_0)=0.\]

Differentiating equation (\ref{523}) with respect to $z$, we can obtain
\begin{equation}\label{524}
\frac{d^2 V'_0}{dz^2}+f_v(\lambda,V_0(\lambda,z))V'_0=0,
\end{equation}
where $V'_0=\frac{dV_0}{dz}$.  Multiplying (\ref{524}) by $\Phi$ and (\ref{523}) by $V'_0$ respectively and then integrating them over $(-\infty,z_0)$ by parts, we obtain that
\[0=\int_{-\infty}^{z_0} \tilde\Phi_0''V'_0-\tilde\Phi_0 (V'_0)''dz=\tilde\Phi_0'(z)V'_0(z)\big \vert_{-\infty}^{z_0}-\tilde\Phi_0(z)V''_0(z)\big \vert_{-\infty}^{z_0}.\]
Since $V_0''(z_0)\neq0$, we conclude from (\ref{56}) that $\tilde\Phi_0(z_0)=0$, however this is a contradiction to the assumption that $\tilde\Phi_0(z_0)=1$.  Therefore, we have proved the invertibility of $\mathcal{L}_\epsilon$ and we denote its inverse operator by $\mathcal{L}^{-1}_\epsilon$ from now on.

To show that $\mathcal{L}^{-1}_\epsilon$ is uniformly bounded on $L^p(0,L)$ for all $p\in[1,\infty]$, it suffices to prove it for $p=2$ thanks to the Marcinkiewicz interpolation Theorem.
We consider the following eigenvalue problem
\begin{equation}\label{525}
\left\{
\begin{array}{ll}
\mathcal{L}_\epsilon \varphi_{i,\epsilon}=\mu_{i,\epsilon} \varphi_{i,\epsilon},~x\in(0,L),\\
\varphi_{i,\epsilon}'(0)=\varphi_{i,\epsilon}'(L)=0,\\
\sup_{x\in(0,L)}   \varphi_{i,\epsilon}(x) =1.
\end{array}
\right.
\end{equation}
By applying the same analysis as above, we can show that for each $\lambda\in\Big(\frac{a_2}{b_2}+\delta, \frac{c_2}{b_2 r}e^{\frac{a_2 r}{c_2}-1} -\delta\Big)$, there exists a constant $C(\lambda)>0$ independent of $\epsilon$ such that $\mu_{i,\epsilon}\geq C(\lambda)$ for all $\epsilon$ sufficiently small.  Therefore
\[\Vert \mathcal{L}_\epsilon^{-1} g\Vert_{2} =\big\Vert \sum_{j=0}^\infty \frac{<g,\varphi_{i,\epsilon}>}{\mu_{i,\epsilon}} \varphi_{i,\epsilon} \big\Vert_{2}\leq C^{-1} \Vert g\Vert_{2},\]
where $<\cdot,\cdot>$ denotes the inner product in $L^2$.  This finishes the proof of Lemma \ref{lem53}.
\end{proof}

We proceed to employ the solution $v_\epsilon(\lambda,x)$ of (\ref{52}) as obtained in Proposition \ref{prop4} to construct solutions of (\ref{51}).  Therefore, we only need to show that there exists $(v_\epsilon(\lambda_\epsilon,x), \lambda_\epsilon)$ to (\ref{52}) such that the integral condition in (\ref{51}) is satisfied.

First of all, we can have from simple calculations that $\lambda(\bar{v}_2)<\frac{a_2}{b_2}$ for all $\bar{v}_2\in(\frac{a_1}{c_1},\frac{a_2}{c_2})$ if $r\in(\frac{c_2}{a_2},\frac{c_1}{a_1}\ln\frac{a_2c_1}{a_2c_1-a_1c_2}]$, this contradicts (\ref{54}) and no transition layer occurs under this condition.  Therefore, we shall assume $r>\frac{c_1}{a_1}\ln\frac{a_2c_1}{a_2c_1-a_1c_2}$ from now on.  Now we present the following necessary conditions on the existence of transition-layer solutions to (\ref{52}).
\begin{proposition}\label{proposition5}
Denote $\lambda(\bar{v}_2)=\frac{a_2-c_2\bar{v}_2}{b_2}e^{r\bar{v}_2}$.  We assume that $\frac{a_1}{c_1}<\frac{a_2}{c_2}$ and $ \bar{v}_2\in (\frac{a_1}{c_1},\frac{a_2}{c_2})\cap(\frac{a_2}{c_2}-\frac{1}{r},\frac{a_2}{c_2})$ with $r\in(\frac{c_1}{a_1}\ln\frac{a_2c_1}{a_2c_1-a_1c_2},\infty)$.  Then there exists a unique $\bar{v}_2^{**}$ such that $\lambda(\bar{v}_2^{**})=\frac{a_2}{b_2}$; moreover, if (\ref{51}) exists a transition-layer solution $v_\epsilon$ that satisfies (\ref{515}), we must have the followings:

(i) if $r\in\big(\frac{c_1}{a_1}\ln\frac{a_2c_1}{a_2c_1-a_1c_2},\frac{c_1c_2}{a_2c_1-a_1c_2}\big)$, then $\lambda(\bar{v}_2)\in\big(\frac{a_2}{b_2},\frac{a_2c_1-a_1c_2}{b_2c_1}e^{\frac{a_1r}{c_1}}\big)$ for all $\bar{v}_2\in(\frac{a_1}{c_1}, \bar{v}_2^{**})$;

(ii) if $r\in[\frac{c_1c_2}{a_2c_1-a_1c_2},\infty)$, then $\lambda(\bar{v}_2)\in\big(\frac{a_2}{b_2},\frac{c_2}{b_2r}e^{\frac{a_2r}{c_2}-1}\big)$ for all $\bar{v}_2\in(\frac{a_2}{c_2}-\frac{1}{r},\frac{a_2}{c_2})$.
\begin{proof}
It follows from (\ref{53}) and (\ref{54}) that $\bar{v}_2\in(\frac{a_1}{c_1},\frac{a_2}{c_2})$; moreover, we can easily show that $\lambda(\bar{v}_2)$ is monotone increasing when $\bar{v}_2\in(\frac{a_1}{c_1},\frac{a_2}{c_2}-\frac{1}{r})$ and is monotone decreasing when $\bar{v}_2\in(\frac{a_2}{c_2}-\frac{1}{r},\frac{a_2}{c_2})$.  On the other hand, since $\lambda(\frac{a_2}{c_2}-\frac{1}{r})=\frac{c_2}{b_2r} e^{\frac{a_2}{c_2}r-1}>\frac{a_2}{b_2}$ from(\ref{54}) and $\lambda(\frac{a_2}{c_2})=0$, there exists a unique $\bar{v}_2^{**}$ such that $\lambda(\bar{v}^{**}_2)=\frac{a_2}{b_2}$.  We also want to point out that, if $\frac{a_1}{a_2}<\frac{c_1}{c_2}$,
\[ \frac{c_2}{a_2}<\frac{c_1}{a_1}\ln\frac{a_2c_1}{a_2c_1-a_1c_2}<\frac{c_1c_2}{a_2c_1-a_1c_2}.  \]
Sending $\epsilon$ to zero in (\ref{51}), we have from Lebesgue Dominated Convergence Theorem that $\lambda=\frac{a_2-c_2\bar{v}_2}{b_2}e^{r\bar{v}_2}$.  Now we finish the proof by considering the following two cases.

If $\frac{a_2}{c_2}-\frac{1}{r}<\frac{a_1}{c_1}$  and $\lambda(\frac{a_1}{c_1})>\frac{a_2}{b_2},$ then $\bar{v}_2\in(\frac{a_1}{c_1},\frac{a_2}{c_2})$ and $r\in(\frac{c_1}{a_1}\ln\frac{a_2c_1}{a_2c_1-a_1c_2},\break\frac{c_1c_2}{a_2c_1-a_1c_2}).$  Moreover, we have that $\lambda(\frac{a_1}{c_1})=\frac{a_2c_1-a_1c_2}{b_2c_1}e^{\frac{a_1r}{c_1}}>\frac{a_2}{b_2}$ for all $\bar{v}_2\in(\frac{a_2}{c_2}-\frac{1}{r},\frac{a_2}{c_2})$, thanks to the monotonicity of $\lambda(\bar{v}_2)$, therefore we have finished the proof of \emph{(i)}.

If $\frac{a_2}{c_2}-\frac{1}{r}>\frac{a_1}{c_1},$  we have that $r\in(\frac{c_1c_2}{a_2c_1-a_1c_2},\infty)$ and correspondingly $\bar{v}_2\in(\frac{a_2}{c_2}-\frac{1}{r},\frac{a_2}{c_2})$.  Then we can also prove \emph{(ii)} by the monotonicity of $\lambda(\bar{v}_2)$ for $r\in(\frac{c_1c_2}{a_2c_1-a_1c_2},\infty)$
\end{proof}
\end{proposition}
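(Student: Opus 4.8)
The plan is to reduce the whole statement to a one-variable study of the scalar map $\lambda(\bar v_2)=\frac{a_2-c_2\bar v_2}{b_2}e^{r\bar v_2}$ on the admissible $\bar v_2$-interval. First I would record why the identity $\lambda=\lambda(\bar v_2)$ must hold: if $v_\epsilon$ is a transition-layer solution satisfying (\ref{515}), then passing $\epsilon\to0^+$ in (\ref{51}) forces the plateau value $\bar v_2$ to be a positive root of $f(\lambda,\cdot)=0$ (the larger root $\bar v_2(\lambda)$ singled out in (\ref{53})), i.e. $a_2-b_2\lambda e^{-r\bar v_2}-c_2\bar v_2=0$; solving for $\lambda$ gives exactly $\lambda=\lambda(\bar v_2)$. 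Since such a layer requires the admissibility window (\ref{54}), namely $\lambda\in\bigl(\frac{a_2}{b_2},\,\frac{c_2}{b_2r}e^{a_2r/c_2-1}\bigr)$, the proposition becomes the task of describing, within the hypothesised range $\bar v_2\in\bigl(\frac{a_1}{c_1},\frac{a_2}{c_2}\bigr)\cap\bigl(\frac{a_2}{c_2}-\frac1r,\frac{a_2}{c_2}\bigr)$, exactly where the inequality $\lambda(\bar v_2)>\frac{a_2}{b_2}$ is valid.

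Next I would carry out the monotonicity and boundary-value analysis. Differentiating gives $\lambda'(\bar v_2)=\frac{rc_2}{b_2}e^{r\bar v_2}\bigl(\frac{a_2}{c_2}-\frac1r-\bar v_2\bigr)$, so $\lambda$ increases strictly on $\bigl(0,\frac{a_2}{c_2}-\frac1r\bigr)$, decreases strictly on $\bigl(\frac{a_2}{c_2}-\frac1r,\frac{a_2}{c_2}\bigr)$, attains its maximum $\lambda(v^*)=\frac{c_2}{b_2r}e^{a_2r/c_2-1}$ at $v^*=\frac{a_2}{c_2}-\frac1r$, and satisfies $\lambda(\frac{a_2}{c_2})=0$. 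Because (\ref{54}) forces $\lambda(v^*)>\frac{a_2}{b_2}$ while $\lambda$ decreases strictly from $\lambda(v^*)$ to $0$ on $\bigl(v^*,\frac{a_2}{c_2}\bigr)$, the Intermediate Value Theorem produces a unique $\bar v_2^{**}\in\bigl(v^*,\frac{a_2}{c_2}\bigr)$ with $\lambda(\bar v_2^{**})=\frac{a_2}{b_2}$. To upgrade this to uniqueness on the full interval $\bigl(\frac{a_1}{c_1},\frac{a_2}{c_2}\bigr)$ I would use the elementary equivalence $r>\frac{c_1}{a_1}\ln\frac{a_2c_1}{a_2c_1-a_1c_2}\iff\lambda\bigl(\frac{a_1}{c_1}\bigr)=\frac{a_2c_1-a_1c_2}{b_2c_1}e^{a_1r/c_1}>\frac{a_2}{b_2}$: under the standing hypothesis on $r$ the value $\lambda$ already exceeds $\frac{a_2}{b_2}$ at the left endpoint $\frac{a_1}{c_1}$, and since $\lambda$ is increasing up to $v^*$ it stays above $\frac{a_2}{b_2}$ on the entire increasing branch, leaving $\bar v_2^{**}$ as the only crossing.

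Finally I would split into the two cases by comparing $v^*$ with $\frac{a_1}{c_1}$; a one-line inequality gives $v^*<\frac{a_1}{c_1}\iff r<\frac{c_1c_2}{a_2c_1-a_1c_2}$, and when $\frac{a_1}{a_2}<\frac{c_1}{c_2}$ one also notes $\frac{c_2}{a_2}<\frac{c_1}{a_1}\ln\frac{a_2c_1}{a_2c_1-a_1c_2}<\frac{c_1c_2}{a_2c_1-a_1c_2}$, so the $r$-ranges in (i) and (ii) are nonempty and compatible with (\ref{54}). In case (i) the effective $\bar v_2$-interval is $\bigl(\frac{a_1}{c_1},\frac{a_2}{c_2}\bigr)$, on which $\lambda$ decreases monotonically from $\lambda(\frac{a_1}{c_1})=\frac{a_2c_1-a_1c_2}{b_2c_1}e^{a_1r/c_1}$ to $0$, so $\lambda(\bar v_2)\in\bigl(\frac{a_2}{b_2},\frac{a_2c_1-a_1c_2}{b_2c_1}e^{a_1r/c_1}\bigr)$ precisely for $\bar v_2\in\bigl(\frac{a_1}{c_1},\bar v_2^{**}\bigr)$; in case (ii) the effective interval is $\bigl(\frac{a_2}{c_2}-\frac1r,\frac{a_2}{c_2}\bigr)$, on which $\lambda$ decreases from its global maximum $\frac{c_2}{b_2r}e^{a_2r/c_2-1}$, giving $\lambda(\bar v_2)\in\bigl(\frac{a_2}{b_2},\frac{c_2}{b_2r}e^{a_2r/c_2-1}\bigr)$ for $\bar v_2\in\bigl(\frac{a_2}{c_2}-\frac1r,\bar v_2^{**}\bigr)$. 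I do not expect a genuine obstacle: the argument is single-variable calculus, and the only delicate bookkeeping is tracking which of $\frac{a_1}{c_1}$ and $v^*$ is the effective left endpoint and verifying that the logarithmic threshold on $r$ is exactly the sign condition $\lambda(\frac{a_1}{c_1})>\frac{a_2}{b_2}$ ensuring global uniqueness of $\bar v_2^{**}$.
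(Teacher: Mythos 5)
Your proposal is correct and takes essentially the same route as the paper's proof: identify $\lambda=\lambda(\bar v_2)$ from the $\epsilon\to0^+$ limit, use the sign of $\lambda'(\bar v_2)=\frac{rc_2}{b_2}e^{r\bar v_2}\bigl(\frac{a_2}{c_2}-\frac1r-\bar v_2\bigr)$ together with $\lambda(v^*)>\frac{a_2}{b_2}$ and $\lambda(\frac{a_2}{c_2})=0$ to get the unique $\bar v_2^{**}$, and split cases according to whether $v^*=\frac{a_2}{c_2}-\frac1r$ lies left or right of $\frac{a_1}{c_1}$, i.e. $r\lessgtr\frac{c_1c_2}{a_2c_1-a_1c_2}$. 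Your added observations (the explicit equivalence $r>r^*\iff\lambda(\frac{a_1}{c_1})>\frac{a_2}{b_2}$ securing uniqueness on the whole interval, and restricting the range in (ii) to $\bigl(\frac{a_2}{c_2}-\frac1r,\bar v_2^{**}\bigr)$) are harmless refinements of the same argument.
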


For the simplicity of notations, we write
\[r^* =\frac{c_1}{a_1}\ln\frac{a_2c_1}{a_2c_1-a_1c_2}\] and denote
\begin{equation}\label{526}
I_0=
\left\{
\begin{array}{ll}
 (\frac{a_1}{c_1},v^{**}), &\text{ if } r\in(r^{*},\frac{c_1c_2}{a_2c_1-a_1c_2}) ,\\
 (\frac{a_2}{c_2}-\frac{1}{r},v^{**}), &\text{ if } r\in(\frac{c_1c_2}{a_2c_1-a_1c_2},\infty);
\end{array}
\right.
\end{equation}
Now we are ready to present another main result of our paper.
\begin{theorem}\label{thm54}
Suppose $r\in(r^*,\infty)$ and $\frac{a_1}{a_2}<\frac{c_1}{c_2}$ with $b_1 \rightarrow 0$
as $\epsilon \rightarrow 0$.  .  Then for each $\bar {v}_2 \in I_0$ defined in (\ref{526}), there exists $\epsilon_0=\epsilon_0(\bar {v}_2)>0$ such that (\ref{51}) admits positive solutions $(\lambda_\epsilon ,v_\epsilon(\lambda_\epsilon,x))$ for all $\epsilon\in(0,\epsilon_0)$; moreover
\begin{equation}\label{527}
\lim_{\epsilon \rightarrow 0^+} v_\epsilon(\lambda_\epsilon,x)=
\left\{
\begin{array}{ll}
\bar v_2,&\text{ compact uniformly on } [0,x_0),\\
\bar v_2/2,& x=x_0,\\
0, &\text{ compact uniformly on } (x_0,L],
\end{array}
\right.
\end{equation}
and
\begin{equation}\label{528}
\lim_{\epsilon \rightarrow 0^+} \lambda_\epsilon = \bar \lambda_0=\frac{(a_2-c_2\bar v_2)e^{r\bar v_2}}{b_2},
\end{equation}
where in (\ref{527})
\begin{equation}\label{529}
x_0=\frac{a_1L}{a_1-(a_1-c_1  \bar{v}_{2})e^{-r\bar{v}_{2}}}.
\end{equation}
\end{theorem}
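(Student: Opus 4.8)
The plan is to decouple the two equations in (\ref{51}). For a fixed target $\bar v_2\in I_0$ I would first read off the layer location $x_0$ from (\ref{529}); since $\bar v_2>a_1/c_1$ on either branch of $I_0$ under $r>r^*$, one has $a_1-c_1\bar v_2<0$, hence $a_1-(a_1-c_1\bar v_2)e^{-r\bar v_2}>a_1>0$ and $x_0\in(0,L)$. I would also set $\bar\lambda_0=(a_2-c_2\bar v_2)e^{r\bar v_2}/b_2$; by Proposition \ref{proposition5} the hypotheses $r>r^*$, $a_1/a_2<c_1/c_2$ and $\bar v_2\in I_0$ guarantee $\bar\lambda_0\in\bigl(\frac{a_2}{b_2},\frac{c_2}{b_2r}e^{a_2r/c_2-1}\bigr)$, so I may fix $\delta,\rho>0$ with $[\bar\lambda_0-\rho,\bar\lambda_0+\rho]\subset\bigl(\frac{a_2}{b_2}+\delta,\frac{c_2}{b_2r}e^{a_2r/c_2-1}-\delta\bigr)$. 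Proposition \ref{prop4}, applied with this $\delta$ and the layer position $x_0$, then produces, for every $\lambda\in[\bar\lambda_0-\rho,\bar\lambda_0+\rho]$ and every $\epsilon\in(0,\epsilon_4(\delta))$, a positive transition-layer solution $v_\epsilon(\lambda,x)$ of the scalar problem (\ref{52}) with $\sup_x|v_\epsilon(\lambda,x)-V_\epsilon(\lambda,x)|\le C_4\sqrt\epsilon$ and the step-function limit (\ref{515}).

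The second step is to choose $\lambda=\lambda_\epsilon$ so that the integral constraint holds. Define
\[ G_\epsilon(\lambda)=\int_0^L\bigl(a_1-b_1\lambda e^{-rv_\epsilon(\lambda,x)}-c_1v_\epsilon(\lambda,x)\bigr)e^{-rv_\epsilon(\lambda,x)}\,dx , \]
so that $(\lambda,v_\epsilon(\lambda,\cdot))$ solves (\ref{51}) precisely when $G_\epsilon(\lambda)=0$. Since the fixed point $\Psi_\epsilon$ constructed in the proof of Proposition \ref{prop4} depends continuously on $\lambda$ --- the data $V_\epsilon$, $\mathcal P_\epsilon$, $\mathcal Q_\epsilon$ and the inverse $\mathcal L_\epsilon^{-1}$ all do, with the constants in Lemmas \ref{lem51}--\ref{lem53} uniform over the compact $\lambda$-interval --- the map $\lambda\mapsto v_\epsilon(\lambda,\cdot)$, and hence $G_\epsilon$, is continuous on $[\bar\lambda_0-\rho,\bar\lambda_0+\rho]$. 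Using the $\sqrt\epsilon$-estimate above, the explicit form (\ref{58}) of $V_\epsilon$, the exponential decay (\ref{56}) of $V_0$, and $b_1\to0$, I would pass to the limit $\epsilon\to0$ and obtain, uniformly in $\lambda$ on this interval,
\[ G_\epsilon(\lambda)\longrightarrow G_0(\lambda):=x_0\,(a_1-c_1\bar v_2(\lambda))e^{-r\bar v_2(\lambda)}+(L-x_0)\,a_1 , \]
where $\bar v_2(\lambda)$ is the larger root of $a_2-b_2\lambda e^{-rv}-c_2v=0$, so that $\bar v_2(\bar\lambda_0)=\bar v_2$. A direct computation with $x_0$ taken from (\ref{529}) gives $G_0(\bar\lambda_0)=0$.

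It then remains to verify that $G_0$ is transversal at $\bar\lambda_0$, i.e. $G_0'(\bar\lambda_0)\ne0$. Differentiating the defining equation of $\bar v_2(\lambda)$ gives $\bar v_2'(\lambda)=b_2e^{-r\bar v_2}/(b_2\lambda re^{-r\bar v_2}-c_2)$, whose denominator is $\tilde f_v(\lambda,\bar v_2)<0$ because $\bar v_2$ lies beyond the critical point of $\tilde f(\lambda,\cdot)$; hence $\bar v_2'(\lambda)<0$. Moreover $\frac{d}{dv}\bigl[(a_1-c_1v)e^{-rv}\bigr]=rc_1e^{-rv}\bigl(v-\frac1r-\frac{a_1}{c_1}\bigr)$, which is nonzero at $v=\bar v_2$ under the standing hypotheses, so $G_0'(\bar\lambda_0)=x_0\,\frac{d}{dv}\bigl[(a_1-c_1v)e^{-rv}\bigr]\big|_{v=\bar v_2}\,\bar v_2'(\bar\lambda_0)\ne0$. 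Consequently $G_0(\bar\lambda_0-\rho')$ and $G_0(\bar\lambda_0+\rho')$ have opposite signs for all small $\rho'\le\rho$, and by the uniform convergence $G_\epsilon\to G_0$ the same holds for $G_\epsilon$ once $\epsilon$ is small, so $G_\epsilon$ has a zero $\lambda_\epsilon\in(\bar\lambda_0-\rho',\bar\lambda_0+\rho')$; running this over $\rho'\to0$ yields $\lambda_\epsilon\to\bar\lambda_0$, which is (\ref{528}). Since $\lambda_\epsilon>0$ and $v_\epsilon(\lambda_\epsilon,\cdot)>0$, the pair $(\lambda_\epsilon,v_\epsilon(\lambda_\epsilon,x))$ is the desired positive solution of (\ref{51}), and (\ref{527}) follows from (\ref{515}) with $\lambda$ replaced by $\lambda_\epsilon\to\bar\lambda_0$.

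The main obstacle is establishing the limit $G_\epsilon\to G_0$ with enough uniformity in $\lambda$: one must control the layer contribution over the $O(\sqrt\epsilon)$-band around $x_0$ together with the exponentially small tails, and confirm that the constants of Lemmas \ref{lem51}--\ref{lem53} are uniform across the $\lambda$-interval so that Proposition \ref{prop4} applies with a single $\epsilon_4$. A secondary, more elementary point is verifying the nondegeneracy $\frac{d}{dv}\bigl[(a_1-c_1v)e^{-rv}\bigr]\big|_{v=\bar v_2}\ne0$ --- equivalently $\bar v_2\ne\frac1r+\frac{a_1}{c_1}$ --- directly from the definition of $I_0$ and the bound $r>r^*$; this is exactly the transversality condition that makes the intermediate value argument go through.
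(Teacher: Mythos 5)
Your overall strategy is the same as the paper's: fix $\bar v_2\in I_0$, use Proposition \ref{prop4} to produce the scalar transition-layer solution $v_\epsilon(\lambda,x)$ of (\ref{52}) for each $\lambda$ in a compact neighbourhood of $\bar\lambda_0=\lambda(\bar v_2)$, and then solve the integral constraint for $\lambda=\lambda_\epsilon$. The only methodological difference is in the second step: the paper applies the Implicit Function Theorem to $\mathcal I(\epsilon,\lambda)$ (extending it to $\epsilon\le 0$ by the step function and arguing $\partial\mathcal I/\partial\lambda\ne 0$ via $\lim_{\epsilon\to0^+}\partial v/\partial\lambda<0$ on $[0,x_0)$), whereas you pass to the limit $G_\epsilon\to G_0$ uniformly in $\lambda$ and run an intermediate-value argument off the transversality $G_0'(\bar\lambda_0)\ne0$. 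Your variant is legitimate and arguably cleaner, since it only needs continuity of $\lambda\mapsto v_\epsilon(\lambda,\cdot)$ (from the uniform contraction principle) rather than its differentiability; both routes rest on the same limiting nondegeneracy.

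That nondegeneracy is, however, the one genuine gap in your write-up. You correctly reduce $G_0'(\bar\lambda_0)\ne0$ to $\frac{d}{dv}\bigl[(a_1-c_1v)e^{-rv}\bigr]\big\vert_{v=\bar v_2}=rc_1e^{-r\bar v_2}\bigl(\bar v_2-\tfrac1r-\tfrac{a_1}{c_1}\bigr)\ne0$, i.e.\ to $\bar v_2\ne\tfrac1r+\tfrac{a_1}{c_1}$, but you defer its verification, and it is \emph{not} implied by $\bar v_2\in I_0$ and $r>r^*$: in case \emph{(ii)} of (\ref{526}) one has $\tfrac1r+\tfrac{a_1}{c_1}\in\bigl(\tfrac{a_2}{c_2}-\tfrac1r,\tfrac{a_2}{c_2}\bigr)$ whenever $r<\tfrac{2c_1c_2}{a_2c_1-a_1c_2}$, and for $a_2c_1-a_1c_2$ small one checks $\lambda\bigl(\tfrac1r+\tfrac{a_1}{c_1}\bigr)>\tfrac{a_2}{b_2}$, so this value does lie in $I_0$ for some parameter ranges. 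You must therefore either exclude $\bar v_2=\tfrac{a_1}{c_1}+\tfrac1r$ as an additional hypothesis or supply a separate argument at that point. Note that the paper's own computation of $\partial\mathcal I/\partial\lambda$ passes from its first to its second line by discarding the contribution $\int_0^L(a_1-b_1\lambda e^{-rv_\epsilon}-c_1v_\epsilon)\,\partial_\lambda(e^{-rv_\epsilon})\,dx$, which does not vanish (only the integral of $(a_1-b_1\lambda e^{-rv}-c_1v)e^{-rv}$ is constrained to be zero, not the integrand); restoring that term yields exactly your expression, so the paper's apparently unconditional nonvanishing of the derivative is an artifact of this omission and its IFT argument needs the same exceptional value excluded. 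Apart from this shared issue, your limit $G_\epsilon\to G_0$, the identity $G_0(\bar\lambda_0)=0$ reproducing (\ref{529}), the sign $\bar v_2'(\lambda)<0$, and the conclusion (\ref{527})--(\ref{528}) are all in order.
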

\begin{proof}
We shall apply the Implicit Function Theorem for our proof.  First of all, we see from Proposition \ref{proposition5} that, $\lambda(\bar{v}_{2})=\frac{a_2-c_2 \bar{v}_{2}}{b_2}e^{r \bar{v}_{2}}$ is a one-to-one function of $\bar v_2$ in $I_0$ and $\lambda(\bar{v}_{2})\in \Big(\frac{a_2}{b_2},\frac{c_2}{b_2 r}e^{\frac{a_2}{c_2}r-1} \Big)$.

For each $\lambda(\bar{v}_{2})\in \Big(\frac{a_2}{b_2},\frac{c_2}{b_2 r}e^{\frac{a_2}{c_2}r-1} \Big)$ with $\bar v_2\in I_0$, we take $\delta>0$ and $\epsilon_0(\delta)>0$ small and then define for all $\epsilon\in(0,\epsilon_0)$
\begin{equation}\label{530}
\mathcal{I}(\epsilon,\lambda)=\int_0^L \big(a_1-b_1\lambda e^{-rv_\epsilon(\lambda,x)}-c_1v_\epsilon(\lambda,x)\big)e^{-rv_\epsilon(\lambda,x)} dx,
\end{equation}
where $\lambda \in \big(\bar \lambda_0-\delta , \bar \lambda_0+\delta \big)$, $\delta>0$ small and $\bar \lambda_0$ is to be determined.  For $\epsilon\leq 0$, we set $v_\epsilon(\lambda,x)=\bar v_2(\lambda)$ if $x\in [0,x_0)$ and $v_\epsilon(\lambda,x)=0$ if $x\in(x_0,L]$.  Then we have that
\[\mathcal{I}(\epsilon,\lambda)\equiv  x_0(a_1-b_1\lambda e^{-r\bar v_2}-c_1\bar v_2)e^{-r\bar v_2}+(L-x_0)(a_1-b_1\lambda),~\forall \epsilon \leq 0\]
On the other hand, for $\epsilon>0$, we have from (\ref{530}) that
\begin{eqnarray*}
\frac{\partial \mathcal{I}(\epsilon,\lambda)}{\partial \lambda }\!\!\!\!\!&=&\!\!\!\!\!\int_0^L \Big(2b_1\lambda r e^{-2rv_\epsilon}-a_1re^{-rv_\epsilon}+c_1rv_\epsilon e^{-rv_\epsilon}-c_1e^{-rv_\epsilon}\Big)\frac{\partial v}{\partial \lambda}- b_1e^{-2rv_\epsilon} dx  \nonumber\\
\!\!\!\!\!&=&\!\!\!\!\!\int_0^L \Big(b_1\lambda r e^{-2rv_\epsilon}-c_1e^{-rv_\epsilon}\Big)\frac{\partial v}{\partial \lambda}- b_1e^{-2rv_\epsilon} dx;
\end{eqnarray*}
moreover, we see from Proposition \ref{proposition5} that $\lim_{\epsilon \rightarrow 0^+} \frac{\partial v}{\partial \lambda}<0$ for $x\in[0,x_0)$ and $\lim_{\epsilon \rightarrow 0^+} \frac{\partial v}{\partial \lambda} \equiv 0$ for $x\in(x_0,L]$ pointwisely.  By the Lebesgue Dominated Convergence Theorem, we readily see that $\lim_{\epsilon \rightarrow 0^+} \frac{\partial \mathcal{I}(\epsilon,\lambda)}{\partial \lambda } \neq0$.  Therefore $\frac{\partial \mathcal{I}(\epsilon,\lambda)}{\partial \lambda }$ is continuous in a neighborhood of $(0, \bar \lambda_0)$ for all $\bar \lambda_0 \in \big(\frac{a_2}{b_2},\frac{c_2}{b_2 r}e^{\frac{a_2}{c_2}r-1} \big)$.  Finally, it concludes from the Implicit Function Theorem that, there exist solutions $(v_\epsilon(\lambda_\epsilon,x),\lambda_\epsilon)$ to system (\ref{51}) $\lambda_\epsilon \rightarrow \lambda_0$.

Sending $\epsilon$ to zero in (\ref{51}), we conclude from the Lebesgue Dominated Convergence Theorem that
\begin{equation*}
x_0(a_1-c_1\bar v_2( \lambda_0))e^{-r\bar v_2( \lambda_0)}+a_1(L-x_0)=0,
\end{equation*}
then we see that (\ref{529}) follows from straightforward calculations.  This completes the proof of Theorem \ref{thm54}.
\end{proof}
Thanks to Proposition \ref{proposition5}, we have that in (\ref{28}), $\lambda \in \Big(\frac{a_2}{b_2},\frac{a_2c_1-a_1c_2}{b_2c_1}e^{\frac{a_1 r}{c_1}} \Big)$ if $r\in(r^*,
\frac{c_1c_2}{a_2c_1-a_1c_2})$ and $\lambda_0 \in \Big(\frac{a_2}{b_2},\frac{c_2}{b_2 r}e^{\frac{a_2 r}{c_2}-1} \Big)$ if $r\in(\frac{c_1c_2}{a_2c_1-a_1c_2},\infty)$.

According to (\ref{527}) and Theorem \ref{thm42}, both $u$ and $v$ admit a single transition layer at $x=x_0$ if $D_1$ and $\chi$ are sufficiently large.  Moreover, we can construct infinitely many such transition-layer solutions to the shadow system (\ref{51}) for any $x=x_0$ given by (\ref{529}), provided that $\bar v_2\in I_0$.  These transition-layer solutions can be used to model the segregation phenomenon in interspecific competition.

\section{Conclusion and discussion}\label{section6}
In this paper, we propose and study the $2\times 2$ reaction-advection-diffusion Lotka-Volterra system (\ref{16}) that models the population dynamics of two competing species.  For the one-dimensional domain $\Omega=(0,L)$, the global classical solutions have been proved to exist and be uniformly bounded for all $t\in(0,\infty)$.  Same results are also found for multi-dimensional domain for a parabolic-elliptic system.  For the $1D$ stationary problem (\ref{35}), we show that the constant solution $(\bar u,\bar v)$ becomes unstable for $\chi> \min_{k\in \mathbb N^+} \chi_k$ in the sense of Turing's instability driven by advection.  And then we apply the Crandall-Rabinowitz bifurcation theories to establish its nonconstant positive solutions.  The stability or instability of these bifurcating solutions has also been obtained when the diffusion rates $D_1$ is sufficiently large and $D_2$ is sufficiently small.    By sending $\chi \rightarrow \infty$ with $\frac{\chi}{D_1}=r\in(0,\infty)$, i.e, $D_1$ and $\chi$ being comparably large, we show that $uv$ converges to a constant $\lambda$ uniformly over $[0,L]$ and $v$ converges to $v_\infty$ in $C^1([0,L])$ such that $v_\infty$ satisfies (\ref{47}), a shadow system to (\ref{35}).  The existence and stability of $v_\infty$ have also been obtained through bifurcation theories.  The bifurcating solutions of (\ref{35}) and (\ref{47}) are small perturbations from the constant solutions to the corresponding system and they have only small oscillations.  Our results also provide a wave-mode selection mechanism of stable patterns.  For the full system (\ref{35}), the only stable bifurcation branch $\Gamma_k$ is that of $k_0$ when $\chi_k$ is minimized.  For the shadow system (\ref{47}), only monotone bifurcation solution is stable, which is on the first bifurcation branch that $\epsilon_k$ is maximized.  Finally, for the strong competition case $\frac{a_1}{a_2}<\frac{c_1}{c_2}$ with $b_1=0$, we construct positive transition-layer solutions to the shadow system (\ref{47}), or (\ref{51}).

We are interested in the mathematical modeling of interspecific segregation by the one-dimensional stationary problem (\ref{35}).  According to Theorem \ref{thm42} and Theorem \ref{thm54}, the steady state $v$ has an interior transition layer at $x_0$ if $\min\{\chi,D_1,\frac{1}{D_2}\}$ is sufficiently large with $\chi$ and $D_1$ being comparable, therefore $u\approx \frac{\lambda}{e^{rv}}$ also admits a transition layer at $x_0$.  These transition-layer solutions model the phenomenon that $v$ dominates the habitat $[0,x_0)$ and $u$ the region $(x_0,L]$.  We see that the formation of the transition-layers is a combining effect of diffusion rate $D_i$, $i=1,2$ and advection rate $\chi$.  However, neither the diffusion nor the advection determines the size of the dominating habitat for each species.  Therefore, coexistence and species segregation depend on the kinetic terms rather than species dispersal rates.  Biologically, fast-diffuser $u$ (with large $D_1$) can take the fast-escaping mechanism ($\chi$ large) to avoid interspecific competition and this can be an effective and active way for species $u$ to coexist with and eventually form spatial segregation with slower-diffusers $v$ (with small $D_2$).  We want to compare our results on (\ref{35}) and the results obtained by Lou and Ni \cite{LN,LN2} on the stationary system of (\ref{12}).  It is shown that, as $D_1$ approaches to infinity with $\rho_{12}/D_1=r\in(0,\infty]$, the steady states $(u,v)$ of (\ref{12}) approximate certain shadow systems if $r\in(0,\infty)$ and $v\rightarrow 0$ if $r=0$.  Therefore, the species $v$ extinguish through competition under the influence of large advection rate of species $u$.  This coincides with our results on (\ref{35}) as $D_1 \rightarrow \infty$ with $r=\infty$.  Therefore, large diffusion and advection can protect the species for survival and eliminate the competitors from their habitat.

To the global existence of (\ref{16}) over multi-dimensional domain, we require $\tau=0$ and some other conditions on $D_2$, $b_1$, $b_2$ and $\chi$.  Though these assumptions are not entirely unrealistic from the view points of mathematical and ecological modeling, we surmise that they are not necessary for our mathematical analysis.  To this end, we still need to estimate $\Vert \nabla v(\cdot ,t) \Vert_{L^p}$ for some $p>N$.  This can be a mathematically challenging problem.  The crowding term with $b_1$ in the $u$-equation helps to prevent $\Vert u(\cdot ,t) \Vert_{L^\infty}$ from blowing up over finite or infinite time period, however, whether or not it sufficient for this purpose is unknown, in particular when $\chi$ is large.  Similar problems have been proposed and discussed for general reaction-advection-diffusion systems by survey paper of Cosner \cite{C} and the chemotaxis model by \cite{Winkler2}.  Though the global bounded solutions (\ref{16}) has been obtained for in both $1D$ and $nD$ domains, the understanding of its global dynamics is way from complete.  The dynamics of the double-advection model (\ref{14}) is a more realistic but harder problem that deserves future explorations.

There are also a few important and interesting unsolved questions regarding the stationary system (\ref{35}).  The existence of non-existence of nontrivial solutions in multi-dimensional domain deserves exploring.  The stability of the interior transition-layer solution is another challenging problem that worths future attentions.  Our calculations become extremely complicated and difficult if one wants to remove the constraints on the parameters in our stability analysis of the bifurcating solutions in Theorem \ref{thm32} and Theorem \ref{thm44}.  Last but not least, the steady states of the double-advection problem (\ref{14}) can be another important but also challenging problem that one can pursue in the future.


\medskip
Received  January 2014; revised   August  2014.
\medskip

\end{document}